\theoremstyle{plain}
\newtheorem{theo}{Theorem}[section]
\newtheorem{lem}[theo]{Lemma}
\newtheorem{cor}[theo]{Corollary}
\newtheorem{prop}[theo]{Proposition}
\numberwithin{equation}{section}
\theoremstyle{definition}
\newtheorem{remark}[theo]{Remark}
\newcommand{\bequ}{\begin{equation}}
\newcommand{\eequ}{\end{equation}}
\newcommand{\bali}{\begin{align}}
\newcommand{\eali}{\end{align}}
\def\e{\varepsilon}
\def\vf{\varphi}
\def\a{\alpha}
\def\d{\delta}
\def\D{\Delta}
\def\k{\kappa}
\def\LL{\mathcal L}
\def\pp{\partial}
\def\l{\rho}
\def\L{\Lambda}
\def\s{\sigma}
\def\x{\times}
\def \R{\mathbb R}
\def \N{{\mathbb N}}
\def\E{\mathbb E}
\def \T{\mathcal T}
\def \P{\mathbb P}
\def\ov{\overline}
\def\un{\underline}
\def\om{\omega}
\def\Om{\Omega}
\def\OO{\mathcal O}
\def\A{\mathcal A}
\def\W{\mathcal W}
\def\wh{\widehat}
\def\wt{\widetilde}
\def\({\biggl(}
\def\){\biggr)}
\def\<{\bold\langle}
\def\>{\bold\rangle}
\def\LL{{{\mathcal L}}}
\def\RR{{{\mathcal R}}}
\def\QQ{{{\mathcal Q}}}
\def\SS{{{\mathcal S}}}
\def\PP{{{\mathcal P}}}
\def\XX{\mathcal{X}}
\def\M{\widetilde {M}}
\def\S{\mathcal{S}}
\newcommand\AR[1]{\makebox[0pt][l]{$#1$}\kern0.5em\raisebox{1.5ex}{$\curvearrowleft$}} 
\begin{document}
\title[A family of stable diffusions]{A family of stable diffusions}
\author{Fran\c cois Ledrappier  and  Lin Shu}
\address{Fran\c cois Ledrappier,  Sorbonne Universit\'e, UMR 8001, LPSM, Bo\^{i}te Courrier 158, 4, Place Jussieu, 75252 PARIS cedex
05, France}\email{fledrapp@nd.edu}
\address{Lin Shu,  LMAM, School
of Mathematical Sciences, Peking University, Beijing 100871,
People's Republic of China} \email{lshu@math.pku.edu.cn}
\subjclass[2010]{37D40, 58J65} \keywords{Harmonic measure, Liouville measure}

\thanks{The second author was  partially  supported by  NSFC (No.11331007 and No.11422104).}

\maketitle
\begin{abstract}
Consider a  $C^{\infty}$ closed connected Riemannian manifold $(M, g)$ with negative curvature. The unit tangent bundle $SM$ is foliated by  the (weak) stable foliation  $\mathcal{W}^s$ of the geodesic flow. 
 Let $\Delta^s$ be the leafwise Laplacian for $\mathcal{W}^s$ and let $\ov{X}$ be the geodesic spray,  i.e., the vector field that generates the geodesic flow.  For each $\l$, the operator $\mathcal{L}_{\l}:=\Delta^s+\rho \overline{X}$ generates a  diffusion for  $\mathcal{W}^s$.  We show that, as $\l\to -\infty$, the unique stationary probability measure for the leafwise diffusion  of $\mathcal{L}_{\l}$ converge  to the normalized Liouville measure on $SM$. \end{abstract}

\section{Statement of the result}
Let  $(M, g)$ be an $m$-dimensional closed connected negatively curved $C^{\infty}$ Riemannian  manifold. 
We shall study a class of probability measures on the unit tangent bundle $SM$  which interpolates between the  Burger-Roblin  measure (whose transversal distribution in the weak unstable leaves  is the same as the one for the maximal entropy measure  of the geodesic flow)  and the normalized Liouville measure.

 Let $\wt{g}$ be the the $G$-invariant extension of $g$ to the universal cover space $\M$.  The fundamental group $G=\pi_1(M)$ acts on  $(\wt{M}, \wt{g})$ as isometries such that $M=\wt{M}/G$. Let $\pp\M$ be  the \emph{geometric boundary} of $\M$,   i.e.,   the collection of equivalent classes of unit speed  geodesic rays that remain a bounded distance apart. Since $\wt{g}$ is negatively curved, there is a natural homeomorphism from  $\partial \wt{M}$ to   the unit sphere  $S_{x}\wt{M}$ in the tangent space at $x\in\wt{M}$,  sending $\xi$ to the initial vector of the  geodesic  ray starting from $x$ in the equivalent class  of $\xi$ (\cite{EO}).  Hence we  identify the unit tangent bundle $S\wt{M}=\bigcup_{x\in \M}S_x\wt{M}$  with $\M\times \pp\M$. 

 For each
${\bf v}=(x, \xi)\in S\M$, its \emph{(weak) stable manifold} for the geodesic flow $\{{\bf
\Phi}_t\}_{t\in \Bbb R}$ on $S\wt{M}$, denoted $\wt W^s({\bf v})$,  is the collection of initial
vectors  of  geodesic rays in the equivalent class of $\xi$  and can be
identified with $\M\times\{\xi\}$.  The collection of  $\wt W^s({\bf v})$ form the stable foliation $\wt\W^s$ of $S\M$.  Extend the action of $G$ continuously to $\pp\M$.  Then $SM$   can be identified with the quotient of
$\M\times
\partial\M$ under the diagonal action of $G$.  
  Since  $\psi(\wt W^s({\bf v})) = \wt W^s({D\psi ({\bf v}}))$ for $\psi \in G$,  the collection of quotients of 
 $\wt W^s({\bf v})$ defines a lamination $\W^s$ on $SM$, the so-called \emph{(weak) stable foliation} of $SM$.
 The leaves of  $\W^s$ are discrete quotients of $\M$, which are naturally endowed with the Riemannian metric induced from $\wt g$.  For ${v}\in SM$, let $W^s(v)$ be the leaf of $\W$ containing $v$.
Then $W^s(v)$ is a $C^{\infty}$
immersed submanifold of $SM$ depending H\"{o}lder continuously on $v$ in the
$C^{\infty}$-topology (\cite{SFL}). 

Let  $\mathcal{L}$ be a Markovian operator  (i.e.,  $\mathcal L 1 = 0 $)  on (the smooth functions on) $SM$
with continuous coefficients.  It is  said to be \emph{subordinated to the stable foliation $\mathcal{W}^s$}, if for every smooth function $f$ on $SM$, the value of $\mathcal{L}(f)$  at $v\in SM$ only depends on the restriction of $f$ to $W^s(v)$.   A Borel probability measure ${\bf m}$ on $SM$ is called
\emph{$\LL$-harmonic}  if it satisfies
\[
\int \LL(f)\ d{\bf m}=0
\]
for every smooth function $f$ on $SM$. 
Extend  $\LL$  to be a $G$-equivariant operator on $S\wt{M}=\M\times
\partial \M$, which we shall denote with the same symbol, and,  for ${\bf{v}}=(x, \xi)\in S\M$, let
$\LL^{\bf v}$ denote the laminated operator of $\LL$ on $\wt W^s({\bf
v})=\M\times \{\xi\}$. Call   $\LL$   \emph{weakly coercive},  if its lifted leafwise operators $\LL^{{\bf v}}$, ${\bf v}\in S\M$,
are weakly coercive in the sense that there are a number $\e>0$
(independent of ${\bf v}$)  and, for each $\bf v$, a positive
$(\LL^{{\bf v}}+\e)$-superharmonic function $F$ on $\M$ (i.e.,  $(\LL^{{\bf v}}+\e)F\leq 0$).   It is known that for  a weakly coercive operator, there exists a unique harmonic  measure (\cite{Ga}, \cite{H2}).

One classical example of  weakly coercive operator is $\mathcal{L}=\Delta^s$,  the laminated Laplacian for $\mathcal{W}^s$, whose unique $\mathcal{L}$-harmonic  measure is always referred to as  the \emph{harmonic measure}  (\cite{Ga}).  Many interesting open problems in dynamics are concerned  with  the relationship of  the harmonic measure  with  the normalized Liouville measure  and the normalized  maximal entropy measure for the geodesic flow (Bowen-Margulis measure), and the applications of these relations  to the characterizations  of  the locally symmetric property of the underlying space (see \cite{K, Su} and see  also  \cite{Kai, L5, Y} for more descriptions).

  In this paper, we are interested in the family  $\LL_\rho=\Delta^s+\rho \ov X$,
 where  $\rho$ is a real number and 
$\ov X$ is the geodesic spray. 
 Since $\ov X$ is tangent to the stable manifold,
the operators $\LL_\rho $ are subordinated to the stable foliation.   

Let $V$  denote the \emph{volume entropy} of  $(M,g)$:
\[ V \; = \; \lim\limits_{r \to +\infty } \frac {\log {\rm{Vol}} (B(x, r))}{r} ,\]
where  $B(x, r) $ is the ball of radius $r$ in $(\M, \wt{g})$ and ${\rm Vol}$ is the volume.  The volume entropy coincides with the topological entropy of the geodesic flow on $SM$ since  $g$ has negative sectional curvature (\cite{Man}). 
For $\rho < V$, the operator $\LL _\rho $ is weakly coercive (\cite{H2}) and hence  there is a unique $\LL_{\rho} $-harmonic  measure, which we  will denote by ${\bf m}_{\rho}$.

Clearly,  ${\bf m}_{0}$ is the classical harmonic measure.  When $\rho \to {V}$,  ${\bf m}_{\rho}$ tends to the  Burger-Roblin  measure ${\bf m}_{BR},$  the unique harmonic  measure for  the Laplacian subordinated to  the  strong 
stable foliation  (\cite[Proposition 4.10]{LS}, the uniqueness of such a measure is due to  Kaimanovich (\cite{Kai2})). 
 When $\rho \to -\infty$,   the main result of this paper is:
\begin{theo}\label{mainth} Let  $(M, g)$ be an $m$-dimensional closed connected negatively curved $C^{\infty}$ Riemannian  manifold. As $\rho \to - \infty$, the $ \LL_{\rho}$-harmonic  measure ${\bf m}_\rho $ converge  to the normalized  Liouville  measure on $SM$.\end{theo}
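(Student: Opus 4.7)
Since $SM$ is compact, the family $\{{\bf m}_\rho\}_{\rho < 0}$ is automatically tight, so it suffices to show that every weak-$*$ subsequential limit ${\bf m}_\infty$ of $\{{\bf m}_\rho\}$ as $\rho\to -\infty$ equals the normalized Liouville measure ${\bf m}_L$. The plan is: (i) show ${\bf m}_\infty$ is invariant under the geodesic flow; (ii) show ${\bf m}_\infty$ is absolutely continuous with respect to ${\bf m}_L$; then ergodicity of the geodesic flow with respect to ${\bf m}_L$ forces ${\bf m}_\infty = {\bf m}_L$, which also yields convergence of the entire family.

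Step (i) follows directly from harmonicity: for any $f\in C^{\infty}(SM)$, dividing $\int \LL_\rho f\, d{\bf m}_\rho = 0$ by $\rho$ and letting $\rho\to -\infty$ gives $\int \ov X f\, d{\bf m}_\infty = 0$, since $\Delta^s f$ is bounded and ${\bf m}_\rho$ has total mass one. Step (ii) is the crux. Lifting ${\bf m}_\rho$ to $S\wt M = \wt M \times \partial\wt M$, one has a leafwise disintegration
\[
d{\bf m}_\rho(x,\xi) = k_\rho(x,\xi)\, d\mathrm{vol}_{\wt g}(x)\, d\nu_\rho(\xi),
\]
where $k_\rho(\cdot,\xi)$ is a minimal positive solution of the leafwise equation $\LL_\rho^{\bf v} k_\rho = 0$ on $\wt M\times\{\xi\}$ and $\nu_\rho$ is a uniquely determined $G$-equivariant transverse measure. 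Because $\ov X$ acts on this leaf as (plus or minus) the gradient of the Busemann function $b_\xi$, the gauge transformation $k_\rho = e^{(\rho/2)b_\xi}v_\rho$ removes the drift and reduces the leafwise equation to the Schr\"odinger form
\[
\Delta_{\wt g} v_\rho \; = \; \Big(\tfrac{\rho^2}{4} - \tfrac{\rho}{2}\Delta_{\wt g}b_\xi\Big) v_\rho ,
\]
a semiclassical problem with small parameter $|\rho|^{-1}\to 0$. WKB-type asymptotics of the minimal positive solutions, matched against the Liouville disintegration $d{\bf m}_L = d\mathrm{vol}_{\wt g}(x)\, d\nu_x(\xi)$ (with $\nu_x$ the visibility measure at $x$), should show that the density $d{\bf m}_\rho/d{\bf m}_L$ converges to a constant, so ${\bf m}_\infty \ll {\bf m}_L$.

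The principal obstacle is to carry out these semiclassical asymptotics uniformly in $\xi\in\partial\wt M$ and globally over $\wt M$. This requires Jacobi/Pesin-type comparison estimates for $\Delta b_\xi$ under the negative curvature assumption, Harnack inequalities on the leaves, and careful control of how the normalization (fixing ${\bf m}_\rho(SM)=1$) interacts with the leafwise Schr\"odinger eigenvalue problem on the universal cover. As an alternative route, one may rescale time by $|\rho|$ to recast the $\LL_\rho$-diffusion as a small-noise perturbation of the reversed geodesic flow, with noise along the strong-stable (i.e.\ reverse-unstable) directions; in this Anosov setting, Kifer-type selection theorems would identify the zero-noise limit of the invariant measure as the SRB measure of the reversed flow, which coincides with ${\bf m}_L$ by reversibility of the geodesic flow.
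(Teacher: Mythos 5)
Your Step (i) is correct and matches the paper's observation that any weak-$*$ limit is invariant under the (reversed) geodesic flow. The rest of the proposal, however, runs into exactly the obstacles the paper is designed to circumvent, and both routes you sketch are explicitly addressed and rejected in the paper. For the main route: the paper's Remark~\ref{rem-1} points out precisely that one cannot deduce absolute continuity of the limit from the leafwise density equation, because the Harnack constants for $\LL_\rho$ degenerate as $\rho\to -\infty$, so there is no uniform control of $k_\rho$. Your WKB/gauge idea faces the same wall --- the ``semiclassical asymptotics uniformly in $\xi$ and globally over $\wt M$'' that you yourself flag as the principal obstacle is the whole problem, not a technicality to be filled in; no mechanism is offered to overcome the loss of uniformity on the noncompact leaf, nor to control the normalization. (There is also a sign/adjoint slip: $k_\rho$ solves the \emph{formal adjoint} equation $\Delta k_\rho - \rho\,\mathrm{Div}(k_\rho \ov X)=0$, not $\LL_\rho^{\bf v} k_\rho=0$; this matters for the gauge computation.) For the alternative route: the paper states explicitly that Kifer-type zero-noise selection does not apply here, because the generators $\LL_\rho$ are subordinated to the stable foliation, so the Markov kernels ${\bf p}_\rho(t,\cdot,\cdot)$ are singular on $SM$ and the hypoellipticity hypothesis fails; ``small noise in the stable directions only'' is precisely the degenerate case those theorems do not cover.

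What the paper actually does is adopt the Cowieson--Young thermodynamic-formalism strategy, which is robust to this singularity: lift to the stable orthonormal frame bundle $\OO^s(SM)$, realize the diffusion as a stochastic flow (Proposition~\ref{stochasticflow}), take the stationary lift $\ov{\bf m}_\rho$ with Lebesgue conditionals on fibers, introduce a leafwise relative entropy $h^s_\rho$ (equation~\eqref{underline-h-rho}), prove a lower bound by a random Mañé-type argument (Proposition~\ref{Thieullen}) and an upper semicontinuity estimate along $\rho\to-\infty$ via Yomdin/Buzzi volume-growth bounds (Proposition~\ref{mainprop}), and conclude that the limit satisfies Pesin's entropy formula, hence is Liouville by Bowen--Ruelle. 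If you want a proof that closes, you will need to replace your Step (ii) with an argument of this entropy type (or prove SRB-ness of the sample measures directly, as in Blumenthal--Young), since no analytic control of the leafwise densities survives the limit $\rho\to-\infty$.
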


Roughly speaking,  since  the  measure ${\bf m}_\rho $ is $\LL_\rho$-harmonic, it is also stationary for the operator $-\overline X - (1/\rho)  \D^s$ (see Section 2 for a precise definition). In particular, any limit measure 
of the family ${\bf m}_\rho$ as $-1 /\rho \to 0$ is invariant under the (reversed) geodesic flow. For  a limit of random perturbations of a conservative  Anosov flow,  the convergence of 
the stationary measures  to  a  SRB measure
has been shown by several authors, in particular Kifer (\cite{Ki}), under the condition that the operator $\D$ is hypoelliptic, so that the Markov kernels have a density with respect to Lebesgue on $SM.$  We cannot apply this  to show Theorem \ref{mainth} since in our case, the operators are subordinated to the stable foliation and the Markov kernels ${\bf p}_\rho (t, (x, \xi), d(y, \eta))$ are singular. Another approach by Cowieson-Young (\cite{CY}) uses the variational principle from thermodynamical formalism and we show that such an approach can be used in our case in spite of the singularity of the Markov kernels.  We shall show any  limiting measure ${\bf m}$ of ${\bf m}_{\rho}$ (as $\rho\to -\infty$) satisfies  Pesin  entropy formula for the geodesic flow. Theorem \ref{mainth} follows since the normalized Liouville measure  on $SM$ is indeed characterized by Pesin formula among invariant measures for the geodesic flow (\cite{BR}).
More precisely,  we will  define a stochastic flow on a bigger space and consider a special stationary measure $\ov{\bf m}_\rho$ for that stochastic flow that projects to ${\bf m}_\rho $ on $SM$.  We then  introduce a relative entropy like quantity  $h_{\rho}^s$ for $\ov{\bf m}_\rho$ and  show $h_{\bf m}$, the entropy of ${\bf m}$ for the reversed geodesic flow, satisfies 
\begin{equation}\label{upper-semi}
h_{\bf m}\geq \limsup_{\rho\to -\infty}h_{\rho}^s.
\end{equation}
This  can be done (see Proposition \ref{mainprop}) along the lines of  Cowieson-Young (\cite{CY})  and Kifer-Yomdin (\cite{KY}) for the upper semi-continuity of the relative entropy. To conclude Theorem \ref{mainth}, we  verify that  $\limsup_{\rho\to -\infty}h_{\rho}^s$ has a lower bound  given by Pesin  entropy integral for ${\bf m}$ using the SRB like  properties of $\ov{\bf m}_\rho$ (see Proposition \ref{Thieullen}) and their  nice convergence property  inherited from  our stochastic flow system (see Proposition \ref{con-geo-flow}). 

We arrange the paper as follows. In Section 2, we will give preliminaries on the  properties of the $\mathcal{L}$-harmonic measures and the dynamics of the associated  stochastic flows.  In Section 3, we will introduce the random system to define  $h_{\rho}^s$ and reveal its relation with Pesin entropy formula. The upper semi-continuity equality (\ref{upper-semi}) will be shown in the final section. 

\section{Harmonic measure and stochastic flow}

We begin with some basic  understanding of  the $\LL_{\rho}$-harmonic measure ${\bf m}_{\rho}$ ($\rho<V$) by analyzing  the dynamics of its $G$-invariant extension on $\M\times \partial \M$, which is denoted by  $\wt{\bf m}_{\rho}$.

Consider the $G$-equivariant extension of  $\LL_\rho$  to  $S\wt{M}=\M\times
\partial \M$, which we shall  denote by the same symbol. It defines
a Markovian family of probabilities on $\wt
{\Omega}_{+}$, the space of paths of  $\wt{\om}: \Bbb R_+\to S\M$  (where $\Bbb R_+:=[0, +\infty)$),  equipped with the smallest
$\sigma$-algebra $\A$ for which the projections $R_t:\ \wt{\om}\mapsto
\wt{\om}(t)$ are measurable. Indeed, for ${\bf{v}}=(x, \xi)\in S\M$,  the laminated operator 
$\LL_{\rho}^{\bf v}$ on  $\wt W^s({\bf
v})$ can be regarded as an operator on $\M$ with corresponding
heat kernel functions  $p_{ \rho }^{\bf v}(t, y, z)$, $t>0, y, z\in \M$.
Define
\[
{\bf p_\rho }(t, (x, \xi), d(y, \eta))=p_{ \rho }^{\bf v}(t, x,
y)\, d{\rm{Vol}}(y)\,\delta_{\xi}(\eta),
\]
where $\delta_{\xi}(\cdot)$ is the Dirac function at $\xi$. Then the
diffusion process on $\wt W^s({\bf v})$ with infinitesimal operator
$\LL_{\rho}^{\bf v}$ is given by a Markovian family $\{\P_{\rho }^{\bf
w}\}_{{\bf w}\in \M\times \{\xi\}}$, where for every $t>0$ and
every Borel set $A\subset \M\times \partial\M$ we have
\[
\P_{\rho }^{\bf w}\left(\{\wt{\om}:\ \wt{\om}(t)\in A\}\right)=\int_{A}{\bf p}_\rho (t, {\bf
w}, d(y, \eta)).
\]

\begin{prop}\label{harmonic-mea}(\cite{Ga,H2}) With the above notations, the following  are true  for $\rho<V$.
\begin{itemize}
\item[i)] The measure $\wt{{\bf m}}_\rho$ satisfies, for all $f\in C^{2}(\wt{M}\times \partial
  \M)$ with compact support, 
  \[
\int_{\scriptscriptstyle{\M\times
\partial\M}}\left(\int_{\scriptscriptstyle{\M\times \partial\M}}f(y,
\eta){\bf p}_\rho(t, (x, \xi), d(y, \eta))\right) d\wt{{\bf m}}_\rho(x,
\xi)=\int_{\scriptscriptstyle{\M\times \partial\M}}f(x, \xi)\
d\wt{{\bf m}}_\rho(x, \xi).
  \]
  \item[ii)] The measure $\wt \P_\rho=\int \P_{\rho}^{\bf v}\ d\wt{{\bf m}}_\rho({\bf v})$ on
  $\wt{\Om}_{+}$ is invariant under all the shift maps   $\{\sigma_t\}_{t\in \Bbb R_+}$ on $\wt{\Om}_+$,   where $\sigma_t(\wt{\om}(s))=\wt{\om}(s+t)$  for $s\in \Bbb R_+$ and $\wt{\om}\in
  \wt{\Om}_{+}$.
   \item[iii)] The measure $\wt{{\bf m}}_\rho$ can be
expressed locally at ${\bf v}=(x, \xi)\in S\M$ as $d\wt{{\bf
m}}_{\rho}=k_\rho(y, \eta)(dy\times d\nu_{\rho}(\eta))$, where $\nu_{\rho}$ is a finite
measure on $\partial \M$ without atoms and, for $\nu_{\rho}$-almost every $\eta$, $k_\rho(y,
\eta)$ is a positive function on $\M$ satisfying the equation 
\begin{equation}\label{m-lam-equ}\Delta(k_\rho(y,
\eta))-\rho{\rm{Div}}(k_\rho(y, \eta)\ov X(y, \eta))=0,
\end{equation}
where we continue to use $\ov{X}$ to denote the geodesic spray for  $S\M$. 
\end{itemize}
\end{prop}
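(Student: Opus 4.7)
The plan is to prove the three parts in the order stated, using standard semigroup and elliptic-regularity techniques, with the harder structural statement (iii) relying on leafwise ellipticity of $\LL_\rho$.

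For part (i), I would prove that $\LL_\rho$-harmonicity is equivalent to stationarity of the heat semigroup on $\M\times\pp\M$. Given $f\in C_c^2(\M\times\pp\M)$, define $u_t({\bf v}):=\int f(y,\eta)\,{\bf p}_\rho(t,{\bf v},d(y,\eta))$. Since $\LL_\rho$ is elliptic along each leaf and weakly coercive for $\rho<V$, the leafwise parabolic regularity implies $u_t$ is smooth enough for $\pp_t u_t = \LL_\rho u_t$ to hold pointwise and for $\int\LL_\rho u_t\,d\wt{\bf m}_\rho$ to make sense. Differentiating $\int u_t\,d\wt{\bf m}_\rho$ in $t$ and using the harmonicity of $\wt{\bf m}_\rho$ yields $\frac{d}{dt}\int u_t\,d\wt{\bf m}_\rho=0$, and the initial condition $u_0=f$ gives (i). The key technical point is justifying the interchange of differentiation and integration, which follows from estimates on $p_\rho^{\bf v}(t,y,z)$ uniform in ${\bf v}$ over compact subsets of $SM$.

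Part (ii) is a formal consequence of (i) plus the Markov property. For a finite cylinder $\{\wt\om\colon\wt\om(s_i)\in A_i,\,0\le i\le n\}$, one computes $\wt\P_\rho(\sigma_t^{-1}C)$ by integrating the Markov kernels against $\wt{\bf m}_\rho$ and repeatedly applying stationarity from (i) to the innermost integral in $\wt\om(t)$, then proceeds by induction on $n$. Shift-invariance on cylinder sets extends to $\A$ by a monotone class argument.

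Part (iii) is the substantive structural statement. I would fix a foliated chart at ${\bf v}=(x,\xi)$ in which $SM$ looks like $U\x W$ for $U\subset\M$ open and $W\subset\pp\M$ open, with the stable plaques $U\x\{\eta\}$. By Rokhlin disintegration, $d\wt{\bf m}_\rho=d\wt{\bf m}_\rho^\eta\,d\nu_\rho(\eta)$ for some finite measure $\nu_\rho$ on $W$. The stationarity from (i), applied to test functions of product form $f(y)\f(\eta)$, forces the conditional measures $\wt{\bf m}_\rho^\eta$ to be stationary for the leafwise semigroup generated by $\LL_\rho^{(x,\eta)}=\Delta+\rho\ov X(\cdot,\eta)$. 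Since this leafwise operator is uniformly elliptic with smooth coefficients (and weakly coercive), elliptic regularity for its formal $L^2$-adjoint implies $\wt{\bf m}_\rho^\eta=k_\rho(y,\eta)\,d{\rm Vol}(y)$ with $k_\rho(\cdot,\eta)$ positive and smooth, satisfying the adjoint equation $\Delta k_\rho-\rho\,{\rm Div}(k_\rho\ov X)=0$ (obtained by integrating by parts: $(\ov X)^*g=-{\rm Div}(g\ov X)$). For the no-atoms property of $\nu_\rho$, a mass concentrated on a single $\eta$ would force $\wt{\bf m}_\rho$ to live on one stable leaf, contradicting the existence of a positive global $\LL_\rho$-harmonic density on the compact quotient $SM$ (equivalently, using density/transitivity of $G$-orbits on $\pp\M$ together with $G$-equivariance of $\wt{\bf m}_\rho$).

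The main obstacle is in (iii): identifying the correct leafwise equation and justifying that stationarity of the disintegrated conditionals really gives the $L^2$-adjoint equation, uniformly in the transverse parameter $\eta$ (so that $k_\rho$ can be chosen jointly measurable and that the derivation of the PDE survives integration against arbitrary transverse test functions). Once this is handled, (i) and (ii) are largely formal.
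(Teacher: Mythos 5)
The paper does not prove this proposition; it is stated as a citation to Garnett \cite{Ga} and Hamenst\"adt \cite{H2}, and the text moves directly to Remark 2.2. So there is no ``paper's own proof'' to compare against line by line. Your task, then, is really to reconstruct the arguments from those references, and your sketch is broadly aligned with how the result is established there: Garnett's theory of leafwise diffusions and harmonic measures gives (i) and (ii) essentially as you describe (stationarity of the leafwise heat semigroup is equivalent, for a Markovian leafwise-elliptic generator, to the condition $\int \LL f\,d{\bf m}=0$ on test functions, and (ii) is the standard Markov-property computation on cylinders), while (iii) comes from disintegrating along the foliation and identifying the conditionals as solutions of the adjoint leafwise equation, which is exactly $\Delta k - \rho\,{\rm Div}(k\ov X)=0$ once one uses $(\ov X)^*g=-{\rm Div}(g\ov X)$. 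Your derivation of that adjoint PDE is correct.

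The one place your argument as written has a genuine gap is the no-atoms claim in (iii). Your first suggested argument --- that an atom at $\eta$ would force $\wt{\bf m}_\rho$ to live on a single stable leaf --- does not follow: an atom at one $\eta$ does not exclude mass elsewhere. Your second, parenthetical suggestion ($G$-quasi-invariance of $\nu_\rho$ plus minimality of the $G$-action on $\pp\M$) is the right one, but it needs to be spelled out: $G$-invariance of $\wt{\bf m}_\rho$ forces $\nu_\rho$ to be quasi-invariant under $G$, hence the atom set is a $G$-invariant countable set; since $G$ acts on $\pp\M$ with all orbits infinite (indeed dense, by compactness and negative curvature) and $\nu_\rho$ is a finite measure, a $G$-orbit of an atom of fixed positive mass would have infinite total mass. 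One also needs to handle possible fixed points of individual elements of $G$ on $\pp\M$, but these form a countable set on which $\nu_\rho$ cannot concentrate for the same reason. Apart from that, the technical point you flag at the end of (iii) --- joint measurability of $k_\rho$ in $(y,\eta)$ and uniformity of the elliptic estimates in $\eta$ --- is a real issue; it is handled in \cite{Ga} and \cite{H2} by noting that the leafwise operators depend continuously (in fact H\"older) on the transverse parameter, so the Harnack and interior Schauder estimates can be taken locally uniform, giving a jointly measurable (indeed leafwise-smooth, transversally measurable) representative of $k_\rho$.
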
 

\begin{remark}\label{rem-1}
Let ${\bf m}$ be any weak* limit of the probability measures ${\bf m}_\rho $ on  $SM$ as $\rho \to -\infty$ and let $\wt{\bf m}$ be the $G$-invariant extension of ${\bf m}$ to $\M\times \partial \M$. Clearly, Theorem \ref{mainth} follows if we can  show $\wt{\bf m}$ has absolutely continuous conditional measures on leafs $\M\times \{\xi\}$. But this does not follow directly  from equation (\ref{m-lam-equ}) since  the Harnack inequality used for each $\rho$ finite is worse and worse when $\rho$ goes to $-\infty$ and hence we have less and less  control of the  density functions $k_\rho$. 
\end{remark}

  For Theorem \ref{mainth}, we will further explore the  invariant dynamics of ${\bf m}_{\rho}$ from the stochastic flow point of view and use it to establish  the entropy formula  for the limit measures. 
  
  We first recall some classical results  from the theory of  Stochastic Differential Equations (SDE).
Let $\{B_t=(B^1_t, \cdots, B^d_t)\}_{t \in \Bbb R_+} $ be a $d$-dimensional Euclidean Brownian motion starting from the origin with the Euclidean Laplacian  generator  (so the covariance matrix is  $2t {\rm Id}$) and let $(\Om, \P)$ denote the corresponding Wiener space. Let ${\bf X}=(X_0, X_1, \cdots, X_d)$, where $\{X_i\}_{i\leq d+1}$ are  bounded vector fields on a smooth finite dimensional Riemannian
manifold $({\rm N}, \langle\cdot, \cdot\rangle)$.   The pair $({\bf X}, \{B_t\}_{t\in \Bbb R_+})$  consists of   a  \emph{stochastic dynamical system  (SDS)} on ${\rm N}$  and it is   $C^{j}$ ($j\geq 1$ or $j=\infty$) if all $X_i$ are $C^j$ bounded  (\cite{El}).  An ${\rm N}$-valued semimartingale   $\{{x}_t\}_{t\in \Bbb R_+}$ defined up to a stopping time ${\bf e}^{x_0}$ is said
to be a solution of the following Stratonovich SDE 
\begin{equation}\label{SDE-x}
d{x}_t(\om)=X_0({x}_t(\om))\, dt + \sum_{i=1}^{d}X_i({x}_t(\om))\circ dB_{t}^{i}(\om), 
\end{equation}
 if for all $f\in C^{\infty}({\rm N})$, 
\[
f({x}_t(\om))=f({x}_0(\om))+\int_{0}^{t}  X_0 f({x}_s(\om))\, ds +\int_{0}^{t} \sum_{i=1}^{d} X_i f({x}_s(\om))\circ dB_{s}^{i}(\om), \ \forall  0\leq t< {\bf e}^{x_0}(w).
\] 
The  solution to (\ref{SDE-x}) always exists  and is essentially unique when all $X_i$'s   are $C^1$ bounded (\cite{El}). Moreover, for   $\P$ almost all ${\om}$,  the  mapping 
\[
F_t(\cdot, {\om}):\ x_0({\om})\mapsto x_t({\om})
\]
has the following property.

\begin{prop}\label{El-Ku}(\cite[Chapter VIII]{El}) Let $({\bf X}, \{B_t\}_{t\in \Bbb R_+})$ be a $C^j$ SDS  on ${\rm N}$, where $j\geq 1$ or $j=\infty$. There is a version of the explosion time map $x\mapsto {\bf e}^x$, defined for $x\in {\rm N}$, and a version of $\{F_t(x, {\om})\}$, defined when $t\in [0, {\bf e}^x({\om}))$, such that if ${\rm N}(t, {\om})=\{x\in {\rm N}:\ t<{\bf e}^x(w)\}$, then the following are true  for each $(t, {\om})\in \Bbb R_+\times \Omega$. 
\begin{itemize}
\item[i)] The set ${\rm N}(t, {\om})$ is open in ${\rm N}$. 
\item[ii)] For almost all $w$, $x_0\in {\rm N}$ and  $0\leq t<t'<{\bf e}^{x_0}(w)$,  we have the cocycle equality 
\[
F_{t'}(x_0, \om)=F_{t'-t}(x_{t}, \sigma_t(\om))\circ F_{t}(x_0, \om),
\]
where $\s_t $ is the shift transformation on $\Om$: 
\[\s _t \left((B^1_s, \cdots, B^m_s)_{s \geq 0}\right)  = \left((B^1_{t+s}, \cdots, B^m_{t+s})_{s \geq 0}\right)   - (B^1_t, \cdots, B^m_t).\]
\item[iii)] The map $F_t(x, {\om}):\ {\rm N}(t, {\om})\to {\rm N}$ is $C^{j-1}$ (or $C^{\infty}$ when $j=\infty$) and is  a diffeomorphism onto an open subset of ${\rm N}$. Moreover, the map $\tau\mapsto F_{\tau}(\cdot, {\om})$ of $[0, t]$ into $C^{j-1}$ (or $C^{\infty}$ when $j=\infty$) mappings of ${\rm N}(t, {\om})$  is continuous.
\item[iv)] For $1\leq l\leq j-1$,  denote by $D^{(l)}F_t(\cdot, {\om})$ the  $l$-th tangent map of $F_t$.  Then, for  any $q\in [1, \infty)$, there is a bounded function $c_l(t, q)$, which depends on $t, m, q$, and the bounds of $\{{\bf \nabla}^{\iota}X_0\}_{\iota\leq l}$, $\{{\bf \nabla}^{\iota}X_i\}_{1\leq i\leq d, \iota\leq l+1}$ and $\{{\bf \nabla}^{\iota-1}R\}_{1\leq i\leq d, \iota\leq l+1}$ such that 
$\|[D^{(l)}F_t(\cdot, {\om})]\|_{L^q}<c_l(t, q)$, where $\|\cdot\|_{L^q}$ is the $L^q$-norm and ${\bf \nabla}^{\iota}$ denotes the $\iota$-th covariant derivative and $R$ is the curvature tensor. \end{itemize}
\end{prop}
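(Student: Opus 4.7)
The plan is to reconstruct the standard Kunita--Elworthy construction for stochastic flows of diffeomorphisms. First, working in local charts (or after an isometric embedding of ${\rm N}$ into some $\R^k$, then projecting back), one reduces equation (\ref{SDE-x}) to a Stratonovich SDE on Euclidean space with $C^j$ bounded coefficients, and converts it to Itô form using the usual Stratonovich correction; the resulting drift involves one more derivative of the $X_i$'s and is still bounded. A Picard iteration in $L^q(\Omega)$ produces a strong solution $\{x_t\}$, unique up to an explosion time ${\bf e}^x(\om)$, together with continuous dependence on the initial data in $L^q$. This gives candidate processes $F_t(x,\om)$.

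Next, the cocycle identity in (ii) is obtained from pathwise uniqueness: both $F_{t'}(x_0,\om)$ and $F_{t'-t}(x_t,\sigma_t(\om))\circ F_t(x_0,\om)$ solve the same SDE starting from $x_t(\om)$, driven by the same shifted Brownian motion, so they must agree almost surely; choosing a jointly measurable version and taking limits along rationals gives the stated simultaneous identity. The openness of ${\rm N}(t,\om)$ in (i) follows because $x\mapsto {\bf e}^x(\om)$ is lower semicontinuous almost surely, a consequence of the continuous $L^q$-dependence on the initial condition combined with Kolmogorov's continuity criterion.

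The key technical core is (iii)--(iv), the regularity of $x\mapsto F_t(x,\om)$ and the $L^q$ bounds on its derivatives. I would differentiate (\ref{SDE-x}) formally in the initial condition to obtain a system of linear Stratonovich SDEs for $D^{(l)}F_t(x,\om)$: the coefficients involve $\nabla^{\iota}X_0$ for $\iota\le l$, $\nabla^{\iota}X_i$ for $\iota\le l+1$, and curvature terms $\nabla^{\iota-1}R$ arising when one commutes covariant derivatives on ${\rm N}$. Each of these auxiliary linear SDEs is solvable in $L^q$, and repeated application of the Burkholder--Davis--Gundy inequality together with Gronwall's lemma gives the uniform bound $\|D^{(l)}F_t(\cdot,\om)\|_{L^q}<c_l(t,q)$ with $c_l(t,q)$ depending only on the quantities listed in (iv). Kolmogorov's continuity theorem then upgrades these moment estimates to the existence of an almost surely $C^{j-1}$ (or $C^\infty$) version of $F_t$, jointly continuous in $t$ into the space of such mappings. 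To get that $F_t$ is a diffeomorphism onto an open set, I would run the same construction for the time-reversed SDE, obtaining an inverse flow and identifying its domain with $F_t({\rm N}(t,\om),\om)$.

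The principal obstacle is the derivative estimate in (iv): one must carry out the inductive bookkeeping so that the appearance of Lie brackets from the Stratonovich correction, together with the curvature terms from commuting covariant derivatives, produces an SDE whose coefficients can honestly be dominated in $L^q$ by the exact norms $\|\nabla^{\iota}X_0\|$, $\|\nabla^{\iota}X_i\|$ and $\|\nabla^{\iota-1}R\|$ in the indicated ranges, with no loss in the order of derivative. Once this combinatorial bookkeeping is under control, the BDG--Gronwall step is routine and the rest of the proposition falls out.
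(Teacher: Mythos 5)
The paper gives no proof of Proposition \ref{El-Ku}: it is quoted as a known result from Elworthy's book \cite[Chapter VIII]{El}, so there is no in-text argument to compare against. Your sketch outlines the standard Kunita--Elworthy construction that establishes it, and the overall strategy (Picard iteration for existence and uniqueness, pathwise uniqueness for the cocycle identity on a common null set, formal differentiation of the SDE in the initial condition to obtain linear SDEs for $D^{(l)}F_t$, Burkholder--Davis--Gundy plus Gronwall for the $L^q$ moment bounds, Kolmogorov to upgrade to a continuous modification, and time reversal for invertibility) is exactly the route followed in the cited source.

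Two points would need to be firmed up before this is a proof rather than a program. First, the ambient manifold $\rm N$ in the paper's application is a frame bundle over a non-compact universal cover, and a generic isometric embedding into $\R^k$ does not preserve bounded geometry; since the hypothesis is stated intrinsically (bounded vector fields and bounded covariant derivatives, and the curvature tensor $R$ with its derivatives), the intrinsic approach of \cite[Chapter VIII]{El} is what ensures the constants $c_l(t,q)$ in (iv) really do depend only on the listed quantities. If you choose the embedding route, you owe an argument that a bounded-geometry embedding exists for the relevant $\rm N$ and that the extended vector fields remain bounded; your alternative of working in local charts sidesteps this but then you must patch the estimates across charts with controlled transition maps. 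Second, the inductive bookkeeping in (iv) that you flag is the genuine content: when you covariantly differentiate the Stratonovich SDE $l$ times, commuting covariant derivatives produces $\nabla^{\iota-1}R$ terms and the Stratonovich correction produces one extra derivative of the $X_i$, and one must check that the resulting linear SDE for $D^{(l)}F_t$ has coefficients dominated by exactly the stated norms and no more, in order that $c_l(t,q)$ depends precisely on $\{\nabla^{\iota}X_0\}_{\iota\le l}$, $\{\nabla^{\iota}X_i\}_{\iota\le l+1}$, and $\{\nabla^{\iota-1}R\}_{\iota\le l+1}$. Your outline correctly locates where the work lives but does not carry it out; as presented it is a faithful reconstruction of the cited argument's skeleton rather than a self-contained proof.
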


When  ${\rm N}(t, {\om})\equiv {\rm N}$, the solution process $\{x_t\}$  to (\ref{SDE-x}) is said to be {\it{non-explosive}}. In this case,   the maps $\{F_t(\cdot, {\om})\}_{t\in \Bbb R_+}$ induce a kind of semi-flow   on ${\rm N}$, which we shall call the \emph{stochastic flow} associated to the SDS  $({\bf X}, \{B_t\}_{t\in \Bbb R_+})$   or (\ref{SDE-x}). A direct consequence of  Proposition \ref{El-Ku} is the following  regularity of  a one-parameter  family  of stochastic flows.

\begin{cor}\label{cor2}Let  $({{\bf X}}^{a},\{B_t\}_{t\in \Bbb R_+})$  be a one-parameter family  of  SDS on ${\rm N}$ with ${\rm N}^a(t, \om)\equiv {\rm N}$. 
Assume ${{ X}}^{a}_i$'s are all $C^k$ $(k\geq 1\, {\mbox or}=\infty)$ on ${\rm N}\times {\rm A}$ in the product differentiable structure. Then for any $t>0$,  and $j\leq k-1$, $a\mapsto F_{t}^{a}(\cdot, {\om})$ is $C^j$  in the space of  $C^{k-1-j}$ maps of ${\rm N}$. 
 \end{cor}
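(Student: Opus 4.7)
The plan is to reduce Corollary \ref{cor2} to a single application of Proposition \ref{El-Ku} by augmenting the state space. Form the product SDS $(\widetilde{\bf X}, \{B_t\}_{t\in \Bbb R_+})$ on ${\rm N} \times {\rm A}$ by setting
\[
\widetilde{X}_i(x, a) := (X_i^a(x), 0) \in T_x{\rm N} \oplus T_a{\rm A}, \qquad 0 \leq i \leq d,
\]
so that the $a$-coordinate is frozen along trajectories and the solution started at $(x, a)$ has the form $(F_t^a(x, \om), a)$. By hypothesis these augmented fields are $C^k$ on ${\rm N}\times{\rm A}$, and the fiberwise non-explosion assumption ${\rm N}^a(t, \om) \equiv {\rm N}$ gives non-explosion of the augmented system.

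First I would apply Proposition \ref{El-Ku}(iii) to this product SDS: for each $t > 0$ and almost every $\om$, the map $(x, a) \mapsto (F_t^a(x, \om), a)$ is a $C^{k-1}$ (respectively $C^{\infty}$ when $k=\infty$) diffeomorphism onto its image, and the continuity of $\tau \mapsto \widetilde F_\tau(\cdot, \om)$ on $[0, t]$ in the $C^{k-1}$-topology is obtained from the same part of the proposition. Projecting onto the first factor yields joint $C^{k-1}$ regularity of $(x, a) \mapsto F_t^a(x, \om)$ on ${\rm N} \times {\rm A}$.

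The remaining step is to translate this joint regularity into the parameter regularity of the conclusion. Fix $j \leq k-1$. For every multi-index $(\alpha, \beta)$ with $|\alpha| \leq k-1-j$ and $|\beta| \leq j$, the mixed derivative $\partial_x^\alpha \partial_a^\beta F_t^a(x, \om)$ exists and is jointly continuous on ${\rm N}\times {\rm A}$, hence uniformly continuous on compact subsets. This is exactly the statement that $a \mapsto \partial_a^\beta F_t^a(\cdot, \om)$ is continuous into the space of $C^{k-1-j}$ maps of ${\rm N}$ (with its standard Fr\'echet topology); iterating the fundamental theorem of calculus in $a$ on these derivatives then produces the required $C^j$ regularity of $a \mapsto F_t^a(\cdot, \om)$ as a map into $C^{k-1-j}$ maps of ${\rm N}$.

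No substantive obstacle is anticipated in this argument, as it is essentially a clean product-space reduction. The one step worth verifying carefully is the non-explosion of the augmented system: this holds because the $a$-component of any solution is constant, so the explosion time of the joint process from $(x, a)$ coincides with the explosion time of the $x$-component under ${\bf X}^a$, which is infinite $\P$-almost surely by hypothesis. The second mild point is the passage from joint continuity of mixed derivatives to continuity in the Fr\'echet topology of $C^{k-1-j}$ maps, but this is a standard fact whose verification is essentially local and uses only compactness.
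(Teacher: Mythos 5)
Your proposal is correct and follows essentially the same route as the paper's own proof: form the augmented SDS $((\mathbf{X}^a, 0), \{B_t\})$ on ${\rm N}\times {\rm A}$ so that $(x_t^a, a)$ solves it, then invoke Proposition \ref{El-Ku} with $a$ absorbed into the initial condition. You simply spell out in more detail the final bookkeeping step (passing from joint $C^{k-1}$ regularity of $(x,a)\mapsto F_t^a(x,\om)$ to $C^j$ regularity in $a$ valued in $C^{k-1-j}$ maps), which the paper leaves implicit.
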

\begin{proof}Let $x^{a}_t$ be the solution for the SDS  $({{\bf X}}^{a},\{B_t\}_{t\in \Bbb R_+})$.  Then  $(x^{a}_t, a)$ solves  the new SDS  $(({{\bf X}}^{a}, 0), \{B_t\}_{t\in \Bbb R_+})$ on ${\rm N}\times {\rm A}$. The regularity in $a$ is a straightforward application of Proposition \ref{El-Ku} by treating $a$ as a part of the initial value. 
\end{proof}

 Corollary \ref{cor2} does not apply when  we only have H\"{o}lder continuity of  ${\bf X}^{a}$ in $a$. However,  it is still possible to  discuss the regularities of $a\mapsto F_{t}^{a}(\cdot, {\om})$  by using  one criterion from Kolmogorov:

\begin{prop}(cf. \cite[Theorem 1.4.1]{Ku2})\label{Kol-criterion} 
 Let  $T>0$ and let $\{\mathcal{Y}^{a}_{t}({\om})\}_{t\in [0, T], a\in \mathtt A}$ be a one parameter family  of random processes on a complete metric space, where $\mathtt A$ is some bounded $n$-dimensional Euclidean domain. Suppose there are positive constants $\flat, \flat_0, \flat_1, \cdots, \flat_n$,  with $\sum_{i=0}^n(\flat_i)^{-1}<1$,  and ${\mathtt C}_0(\flat)$ such that for  all $t, t'\in [0, T]$ and $a=(a_1, \cdots, a_n), a'=(a'_1,\cdots, a'_n)\in \mathtt  A$, 
\[
\E\left[\left|\mathcal{Y}^{a}_t-\mathcal{Y}^{a'}_{t'}\right|^{\flat}\right]\leq  {\mathtt C}_0(\flat)\left(|t-t'|^{\flat_0}+\sum_{i=1}^n |a_i-a'_i|^{\flat_i}\right),\]
then  $\mathcal{Y}^{a}_t$ has a continuous modification with respect to the parameter $(t, a)$. 
\par
Let $\beta_i, i=0, \cdots, n$,  be arbitrary positive numbers less than $\flat_i(1-\sum_{0}^{n}(\flat_i)^{-1})/\flat$. Then for any hypercube $\mathtt D$ in $\mathtt  A$, there exists a positive random variable ${\rm k}(\om)$ with $\E[{\rm k} (\om)^{\flat}]<\infty$ such that  for any $t, t'\in [0, T]$ and $a, a'\in \mathtt D$, 
\[
\left|\mathcal{Y}^{a}_t-\mathcal{Y}^{a'}_{t'}\right|\leq  {\rm k} (\om)\left(|t-t'|^{\beta_0}+\sum_{i=1}^n |a_i-a'_i|^{\beta_i}\right).
\]
\end{prop}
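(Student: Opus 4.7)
The plan is to carry out the classical multi-parameter Kolmogorov--Chentsov chaining argument. After restricting to the hypercube $\mathtt{D}$, rescaling to $[0,1]^n$, and relabeling $t \in [0,T]$ as a $0$-th coordinate $a_0 \in [0,1]$, the hypothesis reads
\[
\E\bigl[|\mathcal{Y}^a - \mathcal{Y}^{a'}|^\flat\bigr] \le \mathtt{C}_0 \sum_{i=0}^n |a_i - a'_i|^{\flat_i} \qquad \text{for } a,a' \in [0,1]^{n+1}.
\]
I introduce an anisotropic dyadic grid $D_N$ of spacing $2^{-N/\flat_i}$ in coordinate $i$ (so $|D_N| \asymp 2^{N \sum_i \flat_i^{-1}}$). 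The point of this choice is that every single-coordinate neighbor increment contributes the same value $2^{-N}$ on the right-hand side of the moment bound, which turns the problem into a balanced one across directions.

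The heart of the argument is a Chebyshev--Borel--Cantelli step. For a neighbor pair in direction $i$, Chebyshev and the hypothesis give
\[
\P\bigl(|\mathcal{Y}^a - \mathcal{Y}^{a'}| > 2^{-N\beta_i/\flat_i}\bigr) \le \mathtt{C}_0 \cdot 2^{-N(1 - \beta_i \flat/\flat_i)},
\]
and a union bound over the $\asymp 2^{N\sum_j \flat_j^{-1}}$ such pairs bounds the probability that some neighbor increment in direction $i$ at scale $N$ exceeds $2^{-N\beta_i/\flat_i}$ by $C \cdot 2^{-N(1 - \beta_i \flat/\flat_i - \sum_j \flat_j^{-1})}$. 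The admissibility condition $\beta_i < \flat_i(1 - \sum_j \flat_j^{-1})/\flat$ makes this exponent strictly positive, so Borel--Cantelli supplies an integer random variable $N_0(\om) < \infty$ a.s.\ past which all neighbor increments at every scale $N \ge N_0$ obey the stated bound. A standard telescoping across scales then gives Hölder control on the dense set $\bigcup_N D_N$, and uniform continuity extends the definition to a continuous modification on all of $[0,T] \times \mathtt{D}$, proving the first conclusion.

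For the quantitative Hölder bound, the telescoping produces $|\mathcal{Y}^a - \mathcal{Y}^{a'}| \le \mathrm{k}(\om) \sum_i |a_i - a'_i|^{\beta_i}$, where $\mathrm{k}$ is built from the scale-wise maxima $M_N^i := \max\{|\mathcal{Y}^a - \mathcal{Y}^{a'}| : a,a' \text{ are neighbors at scale } N \text{ in direction } i\}$; the comparison between the telescoping sum and the separation scale $N(a,a') \asymp \min_i \flat_i \log_2(1/|a_i - a'_i|)$ is what realizes the additive form $\sum_i |a_i - a'_i|^{\beta_i}$. The $L^\flat$ bound on $\mathrm{k}$ is obtained directly from the hypothesis, bypassing Chebyshev:
\[
\E\bigl[(M_N^i)^\flat\bigr] \le \sum_{\text{pairs}} \E[|\mathcal{Y}^a - \mathcal{Y}^{a'}|^\flat] \le C \cdot 2^{-N(1 - \sum_j \flat_j^{-1})},
\]
so $\sum_N 2^{N \beta_i \flat / \flat_i} \E[(M_N^i)^\flat]$ converges under exactly the same exponent inequality that drove the Borel--Cantelli step, yielding $\E[\mathrm{k}^\flat] < \infty$. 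The main technical obstacle is the anisotropic bookkeeping --- the grid must be scaled in coordinate $i$ in proportion to $1/\flat_i$, and the telescoping must be organized so the final estimate separates into the additive form --- with the strict inequality $\sum_i \flat_i^{-1} < 1$ supplying the headroom needed both for the union bound and for the moment series on $\mathrm{k}$.
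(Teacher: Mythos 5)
The paper does not give its own proof of this proposition; it cites it verbatim as Theorem~1.4.1 of Kunita's book, and Kunita's proof is precisely the anisotropic Kolmogorov--Chentsov chaining argument you describe (dyadic grid scaled as $2^{-N/\flat_i}$ in coordinate $i$, Chebyshev plus a union bound over the $\asymp 2^{N\sum_j \flat_j^{-1}}$ neighbor pairs, Borel--Cantelli, telescoping), with the headroom $\sum_i \flat_i^{-1} < 1$ used exactly where you use it. Your outline is correct and matches the cited proof; the only point worth flagging is that the anisotropic grids $D_N$ are not literally nested when the $1/\flat_i$ are incommensurable (one replaces $2^{-N/\flat_i}$ by $2^{-\lceil N/\flat_i \rceil}$, or chains one coordinate at a time), and for $\flat>1$ the passage from the series bound on $\E[(M_N^i)^\flat]$ to $\E[\mathrm{k}^\flat]<\infty$ goes through Minkowski's inequality in $L^\flat$ rather than term-by-term subadditivity --- both standard fixes that do not alter the exponent arithmetic.
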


Next, we consider  $\rho <0, \, \e := 1/\sqrt{-\rho} $ and  $\LL'_\e := -\ov X + \e^2 \D^s$. 
Extend  $\LL'_\e$ to be  a $G$-equivariant operator on   $S\wt{M}$, which we shall  denote by  the same symbol.  Its  associated leafwise diffusions  can be visualized using  the classical  Eells-Elworthy-Malliavin construction.

Recall that, for ${\bf v} = (x, \xi)  \in S\M,$  we have identified the stable manifold $\wt W^s({\bf v}) $ with $\M \x \{\xi \} $ and endowed it with the Riemannian metric on $\M$. In the  same way, we can identify an orthogonal frame in the tangent space $T_{{\bf v}}\wt W^s$ with $O_x \x \{\xi \} $, where $O_x  = (e_1, \cdots, e_m) $ is  an element in $\mathcal{O}_x(\M)$, the collection  of the orthogonal frames in $T_x\M.$ Set $\OO ^s(S\M)$ for the bundle of such stable orthogonal frames:
\[ \OO^s (S\M):=  \left\{ (x, \xi ) \mapsto O_x \x \{\xi \}:\  O_x= (e_1, \cdots, e_m) \in \mathcal{O}_x(\M),\, x\in \M \right\}.\]
We carry to $\OO ^s(S\M)$ all the Riemannian geometry from $\OO (\M)=\bigcup_{x\in \M}\OO_x(\M)$.  In particular, if $H_x$ denotes the horizontal lift from $T_x\M $ to $T_{O_x}\OO(\M) ,$ we can define the horizontal lift $\wh H_{{\bf v}}$ from $T_{{\bf v}}W^s$ to $T_{O_{x}, \xi}\OO ^s(S\M) $ by $\wh H_{{\bf v}}(w,\xi) = (H_x(w), \xi)$ for $w\in T_x\M$. 

Let $\{(B^1_t, \cdots, B^m_t)\}_{t\in \Bbb R_+} $  be an $m$-dimensional Euclidean Brownian motion starting from the origin with the Euclidean Laplacian  generator (and covariance matrix  $2t {\rm Id}$) and let $(\Om, \P)$ be the Wiener space.  Set  $\wh {X}$ as  the horizontal lift of $\ov X$ to $T\OO ^s(S\M)$.  We can  realize the diffusion for $\LL'_\e $ as the projection to $S\M$ of the non-explosive solution process $\{{\rm u}_t\}_{t\in \Bbb R_+}$  (the non-explosiveness follows since $(\M, \wt{g})$ has Ricci curvature uniformly bounded from below)  to   the Stratonovich SDE  on $\OO ^s(S\M)$:
\begin{equation}\label{basicSDE} du_t = -\wh {X}(u_t)\: dt  + {\e} \sum _{i = 1} ^m \wh H(u_t(e_i)) \circ dB^i_t. \end{equation}

Let $\wh \pi : \OO ^s(S\M) \to S\M $ be the natural projection and denote $\wh{\W}^s$  for the  foliation of $\OO ^s (S\M)$ that projects on $\wt\W ^s$. Let  $D^\infty (\OO^s S\M) $ be the space of homeomorphisms  of $\OO^s ( S\M)$ that preserve the leaves of $\wh{\W}^s$ and are $C^\infty $-diffeomorphisms along the leaves. We endow $D^\infty (\OO^s S\M) $ with the $C^{0,\infty} $ topology: $\vf,\vf' \in D^\infty (\OO^s S\M) $ are close if, for all $r >0$, the $r$-germs  of $\vf$ and $\vf'$ are uniformly close on compact sets  and the $r$-germs  of $\vf^{-1}$ and $(\vf')^{-1}$ are uniformly close on compact sets.

\begin{prop}\label{stochasticflow}
With the above notations,  for  $\P$-a.e.   $\om \in \Om$,  for all $\e >0$, $t\geq 0$, there exists  $ \vf_{\e,t} (\om) \in  D^\infty (\OO ^s S\M)  $ such that the following hold true. 
\begin{enumerate} 
\item[i)] For all $u \in \OO ^s (S\M),$ $(\om, t )\mapsto \vf_{\e,t} (\om) (u) $ is a solution of the equation (\ref{basicSDE}); in particular, for all $T\geq 0$, $\om\mapsto \vf_{\e,T} (\om) $ is measurable with respect to the $\s$-algebra generated by $(B^1_t, \cdots, B^m_t), {0 \leq t \leq T} .$
\item[ii)] For   almost all $\om ,$ 
all $t,s\geq 0$, $\vf_{\e, t+s} (\om)  = \vf_{\e, t} (\s _s (\om)) \circ \vf_{\e, s} (\om)$.
\item[iii)] For all $\psi\in G$, $D\psi\circ  \vf_{\e,t}(\omega)= \vf_{\e,t}(\omega)\circ D\psi.$
 \item[iv)] The map  $\e \mapsto  \vf_{\e,t} (\om) $ is continuous in $D^\infty (\OO ^s S\M).$ 
\item[v)] For  fixed $r\in \Bbb N, t\geq 0$, 
 \begin{equation}\label{B-r-bound}
 \E\left[\max _u \big\| \vf _{\e,t}(\om)(u)|_{\wh W^s (u)} \big\|_{C^r}\right]<+\infty. 
 \end{equation}
\end{enumerate}\end{prop}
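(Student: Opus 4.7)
The plan is to construct $\vf_{\e,t}(\om)$ by applying the Elworthy-Kunita theorem (Proposition \ref{El-Ku}) leafwise on $\OO^s(S\M)$ and then assembling the leafwise diffeomorphisms into a single $G$-equivariant object in $D^\infty(\OO^s S\M)$. The SDE \eqref{basicSDE} is subordinated to $\wh\W^s$ because both the drift $\wh X$ and each diffusion field $\wh H(\cdot(e_i))$ are tangent to $\wh\W^s$, so any solution starting at $u$ stays on $\wh W^s(u)$. On each leaf, which is isometric to $\OO(\M)$, the equation reduces to the standard Eells-Elworthy-Malliavin horizontal-Brownian SDE with a constant geodesic-spray drift. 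All coefficients are $G$-invariant, hence have uniformly bounded covariant derivatives of every order (since $M=\M/G$ is compact); this, together with the lower Ricci bound on $(\M,\wt g)$, yields non-explosion and lets Proposition \ref{El-Ku} apply on each leaf for every $j$.

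Items (i)--(iv) then follow relatively cleanly. Proposition \ref{El-Ku}(iii) produces, on each leaf, a $C^\infty$ diffeomorphism depending continuously on $t$; since the process never crosses leaves, inserting the trivial transverse dependence assembles these into a member $\vf_{\e,t}(\om)\in D^\infty(\OO^s S\M)$ solving \eqref{basicSDE} from every initial point, and the $\sigma$-algebra measurability required in (i) is immediate from adaptedness. The cocycle (ii) is Proposition \ref{El-Ku}(ii) applied leaf by leaf. For (iii), the $G$-equivariance of $\wh X$ and $\wh H$ means that $D\psi\circ\vf_{\e,t}(\om)$ and $\vf_{\e,t}(\om)\circ D\psi$ are two solutions of \eqref{basicSDE} with the same Brownian path and the same initial condition $D\psi(u)$; pathwise uniqueness forces their equality almost surely, and the countability of $G$ combined with continuity in $t$ removes the null set uniformly in $\psi$ and $t$. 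Item (iv) is Corollary \ref{cor2} applied with parameter $a=\e$, since the coefficients of \eqref{basicSDE} depend $C^\infty$ (in fact linearly) on $\e$.

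The main obstacle is (v), as Proposition \ref{El-Ku}(iv) gives only pointwise $L^q(\P)$ bounds on $D^{(l)}\vf_{\e,t}(\om)(u)$, while \eqref{B-r-bound} asks for the expectation of a sup over $u$ of a leafwise $C^r$-norm, with the leaves themselves non-compact. I would exploit $G$-equivariance (iii) together with the $G$-invariance of the Sasaki-type metric on $\OO^s(S\M)$: because $\psi\in G$ acts by isometries and conjugates $\vf_{\e,t}(\om)|_{\wh W^s(u)}$ to $\vf_{\e,t}(\om)|_{\wh W^s(\psi u)}$, the quantity $\|\vf_{\e,t}(\om)(u)|_{\wh W^s(u)}\|_{C^r}$ is $G$-invariant in $u$, so $\max_u$ reduces to a supremum over a compact fundamental domain $\mathcal{F}\subset\OO^s(S\M)$. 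On $\mathcal{F}$, a Sobolev embedding $W^{r+k,q}\hookrightarrow C^r$ with $kq$ larger than the leaf dimension bounds the $C^r$-norm by $L^q$-norms of jets up to order $r+k$, each of which is controlled by Proposition \ref{El-Ku}(iv) because the constants $c_l(t,q)$ depend only on the $G$-invariant (hence uniform) covariant-derivative bounds of $\wh X$, $\wh H$ and the curvature tensor $R$; a Kolmogorov chaining step (Proposition \ref{Kol-criterion}) in the variable $u\in\mathcal{F}$ then upgrades this pointwise-in-$u$, $L^q$-in-$\om$ estimate to the sup-in-$u$, $L^1$-in-$\om$ bound required by \eqref{B-r-bound}.
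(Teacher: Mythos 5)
Your proof follows the same overall structure as the paper's — leafwise application of Proposition \ref{El-Ku}, pathwise uniqueness for $G$-equivariance, Corollary \ref{cor2} for continuity in $\e$, and for (v) the reduction to a compact fundamental domain plus a pointwise-in-$u$ bound from Proposition \ref{El-Ku}(iv) upgraded to a sup by Kolmogorov chaining. But there is one genuine gap in the treatment of (i)--(iv): you dismiss the transverse assembly by ``inserting the trivial transverse dependence,'' whereas showing that the leafwise flows fit together into an element of $D^\infty(\OO^s S\M)$ (i.e.\ a \emph{homeomorphism} of $\OO^s(S\M)$ preserving $\wh\W^s$, continuous in the $C^{0,\infty}$ topology) is precisely the nontrivial point. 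Proposition \ref{El-Ku} and Corollary \ref{cor2} are applied \emph{within} each leaf $\OO(\M)\times\{\xi\}$ and give no transverse information; Corollary \ref{cor2} cannot be invoked with $\xi$ as a parameter because the vector fields $\wh X$ and $\wh H$ depend only H\"older continuously on $\xi$, not $C^k$. One must instead produce moment estimates (Burkholder plus Gronwall, comparing solutions on nearby leaves) and feed them into the Kolmogorov criterion (Proposition \ref{Kol-criterion}) with $\xi$ as the parameter; this is what yields a continuous modification in $(\xi,u)$ and hence membership in $D^\infty(\OO^s S\M)$. You do invoke Kolmogorov chaining, but only inside your argument for (v) and in the leaf variable $u$ on the fundamental domain; the transverse $\xi$-direction application is the one that is actually indispensable and it belongs already in the proof of (iv). As a secondary remark, the Sobolev-embedding step in (v) is an extra route the paper does not take: the $C^r$-quantity at $u$ is intended as a pointwise control on derivatives up to order $r$ at $u$, so Proposition \ref{El-Ku}(iv) plus Kolmogorov on the compact fundamental domain already suffices; your Sobolev step is not wrong, just not needed.
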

\begin{proof}Since both $\wh { X}$ and $\wh H$ are  tangent to $\wh{\W}^s$,   the solution to (\ref{basicSDE}) is constrained in  $\wh{\W}^s$.  For fixed $\xi$ and $\e$, equation (\ref{basicSDE}) can be seen as a SDE on $\OO (\M) \x \{\xi \}$ and is solvable with infinite explosion time. Hence  properties i) and ii) are given by Proposition \ref{El-Ku}. Property iii) follows from the uniqueness of the solution to (\ref{basicSDE}). 
 Considering $\e$ as a parameter, we get the continuity of the solution in $\e$ by Corollary \ref{cor2}.  Considering $\xi$ as a parameter, the leaves of $\wh{\W}^s$ and  $\wh {X}$, $\wh H$ vary H\"{o}lder continuously with respect to $\xi$. Hence,  by a standard estimation using Burkholder inequality and Gronwall lemma and applying  Proposition \ref{Kol-criterion}, we can  obtain the continuity of the solution to (\ref{basicSDE}) in $\xi$, so that we can consider it as an element of $D^\infty (\OO ^s S\M)$.  This shows iv).  Finally we show v).   Using a fundamental domain for the action of $G$ on $\M$, we may regard  $\OO^s(SM)$ as a subset of $\OO^s(S\M)$. By the $G$-equivariance property of the diffusion, we can restrict $u$ in the left hand side of  (\ref{B-r-bound}) to  $\OO^s(SM)$.  By continuity of $\vf _{\e, t}(\om)(u)|_{\wh W^s (u)}$ in $u$, the compactness of  $\OO^s(SM)$ and  Proposition \ref{Kol-criterion}, for (\ref{B-r-bound}), it suffices to show for each 
   $u\in\OO^s(SM), r\in \Bbb N$ and $ t>0$, 
\begin{equation}\label{B-r-continuity}
\E\left[ \big\| \vf _{\e,t}(\om)(u)|_{\wh W^s (u)} \big\|_{C^r}\right]<+\infty. 
\end{equation}
This is an application of Proposition \ref{basicSDE} iv) by using the SDE (\ref{basicSDE}). 
 \end{proof}

Equation (\ref{basicSDE}) for $\e = 0 $ is the ordinary differential equation $du_t = -\wh {X}(u_t)\; dt $. Its solution is the extension $\{\wh {\bf \Phi }_{-t}\}_{t\in \Bbb R}$  of the reversed geodesic flow to $\OO ^s (S\M)$ by parallel transportation  along the geodesics,  and is  called the \emph{reversed stable frame flow}.  Write $\nu _\rho$ for the probability measure on $D^\infty (\OO^s S\M) $ that is the distribution of $\vf _{\e, 1} =\vf _{1/\sqrt{-\rho},1}$ in Proposition \ref{stochasticflow}.  Every element $\vf \in D^{\infty } (\OO^s S\M)$ preserves each leaf $\wh W^s (u) $  and is a $C^\infty $ diffeomorphism along it. We write $J(\vf ,u)$ for the Jacobian determinant of  the tangent map  of $\vf \large|_{\wh W^s (u)} $ at $u$.  For later use, we state a proposition  concerning the limit behavior of $\vf _{1/\sqrt{-\rho},t}$ when $\rho\to -\infty$. 

\begin{prop}\label{con-geo-flow} With the above notations, the following are true. 
\begin{itemize}
\item[i)] For  $\P$-a.e.  $\om \in \Om$, all $t>0$, as $\rho \to -\infty,$  $\vf _{1/\sqrt{-\rho},t} (\om) $ converge to $\wh {\bf \Phi }_{-t}$ in $D^\infty (\OO ^s S\M)$,  in particular,  $\vf _{1/\sqrt{-\rho},1}$ converge to the time 1 map of the reversed  stable frame flow.
\item[ii)] For any $-\infty<\rho<0$ and $ r, \mathsf M $ positive integers, 
\begin{equation}\label{r-derivatives}B_{r,\mathsf M}(\rho) := \E\left[\max _u \big\| \vf _{1/\sqrt{-\rho},\mathsf M}(\om)(u)|_{\wh W^s (u)} \big\|_{C^r}\right]  < +\infty {\textrm { and } } B_{r, \mathsf M} :=\limsup_{\rho \to  -\infty } B_{r, \mathsf M}(\rho) \; < \; + \infty. \end{equation}
\item[iii)] For any  $r\in \Bbb N$, 
\[
\lim\limits_{\rho\to -\infty}\int  \max _u \left|\| \vf|_{\wh W^s (u)} \|_{C^r} -\| \wh {\bf \Phi }_{-1}|_{\wh W^s (u)} \|_{C^r} \right| \, d\nu _\rho (\vf )=0.
\]
\item[iv)]We have 
\[\lim\limits_{\rho\to -\infty}\int  \log J(\vf , u) \, d\nu_\rho (\vf) =\log J(\wh{\bf\Phi}_{-1}, u)\]
and the convergence is locally uniform in $u$. 
\end{itemize}
\end{prop}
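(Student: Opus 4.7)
The plan is to treat $\e = 1/\sqrt{-\rho}$ as a smooth parameter in the SDE~(\ref{basicSDE}) and deduce the four claims from Proposition~\ref{El-Ku}, Corollary~\ref{cor2}, and Proposition~\ref{Kol-criterion}, supplemented by uniform higher-moment bounds on both the flow and its inverse. The key point is that the coefficients of~(\ref{basicSDE}), namely $-\wh X$ and $\e\,\wh H(\cdot\,e_i)$, depend smoothly on $\e \in [0,1]$ along each stable leaf and H\"older continuously on the transverse parameter $\xi \in \pp\M$, so uniform-in-$\e$ estimates are available.

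For (i), I fix a leaf $\OO_x(\M)\times\{\xi\}$ and apply Corollary~\ref{cor2} with parameter $\e$ in an open interval containing $[0,1]$; this yields continuity of $\e \mapsto \vf_{\e,t}(\om)|_{\OO_x(\M)\times\{\xi\}}$ in the leafwise $C^\infty$ topology. At $\e = 0$, equation~(\ref{basicSDE}) degenerates to $du_t = -\wh X(u_t)\,dt$, whose solution is $\wh{\bf\Phi}_{-t}$, so the leafwise components converge. Uniform transverse control in $\xi$ comes from the H\"older dependence of the coefficients in $\xi$, standard Burkholder-Davis-Gundy and Gronwall estimates applied to differences of solutions, and Proposition~\ref{Kol-criterion}; the resulting moduli of continuity are bounded uniformly in $\e \in [0,1]$, upgrading leafwise convergence to convergence in $D^\infty(\OO^s S\M)$. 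For (ii), $B_{r,\mathsf M}(\rho) < \infty$ is~(\ref{B-r-bound}). For the uniform bound on the $\limsup$, Proposition~\ref{El-Ku}~iv) applied to~(\ref{basicSDE}) furnishes constants $c_l(\mathsf M, q)$ depending only on the covariant derivatives of $\wh X$ and of $\e\,\wh H(\cdot\,e_i)$, and the latter are uniformly bounded for $\e \in [0,1]$; combining with the same Kolmogorov estimate used to establish~(\ref{B-r-bound}) yields a bound independent of $\rho$.

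For (iii), I would apply Proposition~\ref{El-Ku}~iv) with exponent $q > 1$ to show that $\E\big[\big(\max_u \|\vf_{\e,1}|_{\wh W^s(u)}\|_{C^r}\big)^q\big]$ is bounded uniformly in $\e\in[0,1]$, yielding uniform integrability under $\nu_\rho$. Combined with (i), which delivers almost-sure convergence of $\max_u \|\vf_{\e,1}|_{\wh W^s(u)}\|_{C^r}$ to $\max_u \|\wh{\bf\Phi}_{-1}|_{\wh W^s(u)}\|_{C^r}$, Vitali's theorem produces (iii). For (iv), the convergence in $D^\infty(\OO^s S\M)$ from (i) gives, for any compact $K$ and almost every $\om$, locally uniform convergence on $K$ of $\vf_{\e,1}(\om)$, its inverse, and their leafwise derivatives; since $J(\cdot,u) > 0$ depends continuously on the leafwise $C^1$-jet, this implies $\sup_{u\in K}|\log J(\vf_{\e,1}(\om), u) - \log J(\wh{\bf\Phi}_{-1}, u)| \to 0$ almost surely. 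To pass to expectations with local uniformity in $u$, I would control $|\log J(\vf, u)| \leq C + m(|\log \|\vf|_{\wh W^s(u)}\|_{C^1}| + |\log \|\vf^{-1}|_{\wh W^s(\vf(u))}\|_{C^1}|)$ and apply Proposition~\ref{El-Ku}~iv) to both the forward and the time-reversed SDE to get uniform $L^q$ bounds on $\vf_{\e,1}$ and on $\vf_{\e,1}^{-1}$; Vitali convergence then delivers convergence of $\E[\log J(\vf_{\e,1}, u)]$ to $\log J(\wh{\bf\Phi}_{-1}, u)$ uniformly over $u$ in compact sets.

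The principal obstacle is the uniform control of $\log J(\vf_{\e,1}^{-1}, \cdot)$ required for (iv): Jacobians of stochastic flows can degenerate, so bounding the negative part of $\log J$ uniformly as $\e \to 0$ demands careful treatment of the time-reversed SDE satisfied by the inverse flow and verification that the associated covariant-derivative bounds of its coefficients do not blow up as $\e\to 0$.
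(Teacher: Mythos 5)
Your argument for parts (i), (ii) and (iii) tracks the paper's proof closely: treat $\e = 1/\sqrt{-\rho}$ as a parameter via Corollary~\ref{cor2}, extend the continuity to $\e=0$, use H\"older dependence on $\xi$ plus the Kolmogorov criterion (Proposition~\ref{Kol-criterion}) for transverse control, and get the uniform $C^r$ moment bounds from Proposition~\ref{El-Ku}~iv) with uniformity over $\e\in[0,1]$ coming from the uniform bounds on the covariant derivatives of $-\wh X$ and $\e\,\wh H(\cdot\,e_i)$. Your invocation of Vitali for (iii) is a small repackaging of what the paper calls ``continuity in $\e$ of the norm of the tangent maps,'' but it is the same idea.

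For (iv), however, you manufacture an obstacle that is not actually there. You propose to control $\log J$ from below by running the time-reversed SDE and bounding the inverse flow's leafwise $C^1$ norm, and you flag this as the principal difficulty. That detour is unnecessary. The Jacobian determinant $J(\vf_{\e,t},u)$ of the flow map restricted to a fixed stable leaf satisfies its own linear Stratonovich equation obtained by taking the leafwise divergence of~(\ref{basicSDE}): schematically
\[
d\bigl(\log J(\vf_{\e,t},u)\bigr)
= \mathrm{div}^s(-\wh X)\bigl(\vf_{\e,t}(u)\bigr)\,dt
 + \e\sum_{i=1}^m \mathrm{div}^s\bigl(\wh H(\cdot\,e_i)\bigr)\bigl(\vf_{\e,t}(u)\bigr)\circ dB^i_t .
\]
The driving coefficients are the leafwise divergences of bounded-geometry vector fields, hence bounded uniformly in $u$ and in $\e\in[0,1]$. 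Consequently $\log J(\vf_{\e,1},\cdot)$ has $L^q$ bounds, from above \emph{and} below, that are uniform in $\e$ and locally uniform in $u$, without any reference to the inverse flow. Combined with the a.s.\ $C^1$-leafwise convergence $\vf_{\e,1}\to\wh{\bf\Phi}_{-1}$ from (i), the convergence of expectations in (iv) — locally uniform in $u$ — is then immediate. This is precisely what the paper's brief remark ``continuity in $\e$ of \ldots the Jacobian of the first order tangent map'' refers to. Once you replace your inverse-flow step by this direct observation, your proof of (iv) closes and matches the paper's.
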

\begin{proof}The proof of continuity of the solution to  (\ref{basicSDE})  in  Proposition \ref{stochasticflow}  extends to $\e = 0$. This shows i).  When $\rho\to -\infty$,  $\e=1/\sqrt{-\rho}\to 0$.  For ii),  note that 
$B_{r, \mathsf M} (\rho)$ is finite by Proposition \ref{stochasticflow}  v) (applied for $t = \mathsf M$).  Then  $B_{r,\mathsf M}$ is also finite by using the continuity in $(\e, u)$ in the estimation of the expectation in (\ref{B-r-continuity}) in  the proof of Proposition \ref{stochasticflow}  v).  Similarly, we have the continuity  in $\e$ of the tangent maps  (and their derivatives in $\e$) of the solution to  (\ref{basicSDE}).  Following Proposition \ref{El-Ku} iv), it is easy to deduce from (\ref{basicSDE}) the continuity in $\e$ of the norm of the tangent maps and of the Jacobian of the first order tangent map. This shows iii) and iv). 
\end{proof}

It follows from Proposition \ref{stochasticflow} i) and ii) that we can consider $\vf _{\e,n}, n\in \N$,   as an independent product of the  homeomorphisms $\vf _{\e, 1} $ and that we can apply the theory of independent random mappings.    Let $\wh{\pi}$ be the projection map from $\OO^s (S\M)$ to $S\M$.  For  any $C^2$ compactly supported function $f$ on $S\M$,  $(x,\xi) \in S\M$ and  any frame $u \in \OO^s (S\M)$  in the fiber  $\wh{\pi} ^{-1} (x,\xi)$, we have 
\begin{equation}\label{stationarity} \int _{S\M} f (y, \eta) \, d{\bf p}_{\rho}(\e, (x, \xi), d(y, \eta)) \; = \; \int _{D^\infty (\OO^s S\M)} f ( \ov{\pi} \vf (u) )\, d\nu _\rho (\vf ) . \end{equation}
Let   $\wh{\bf m}_\rho$ be the measure on  $\OO^s (S\M) $  that projects on 
$\wt{\bf m}_\rho $ on $S\M$ and such that the conditional measures on fibers of the projection  map  $\wh{\pi}$  are proportional to the Lebesgue measure on $m$-dimensional frames.   The following is true.

\begin{prop}\label{bigmeasure} The measure $\wh {\bf m}_\rho $ is stationary under $\nu _\rho$,  i.e.,  it satisfies, for any $C^2$ compactly  supported function $f$ on $\OO ^s(S\M)$, 
\[ \int f (\vf(u)) \, d\nu _\rho (\vf)\, d\wh {\bf m}_\rho (u) \; = \; \int f(u) \,  d\wh{\bf m}_\rho (u) .\]
Moreover, the conditional measures $\wh{\bf m}_{\rho, u}^s $of $\wh {\bf m}_\rho $ with respect to the leaves of the $\wh \W^s $ foliation are absolutely continuous with respect to  Lebesgue. \end{prop}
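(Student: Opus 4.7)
The plan is to prove stationarity via the Eells--Elworthy--Malliavin picture, reducing it to stationarity of $\wt{\bf m}_\rho$ on $S\wt M$ together with the $O(m)$-equivariance of the SDE~(\ref{basicSDE}); absolute continuity of the leafwise conditionals then drops out of the Haar-on-fibers construction of $\wh{\bf m}_\rho$ together with Proposition~\ref{harmonic-mea}~(iii).

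For stationarity, I would begin by noting that the Stratonovich generator of~(\ref{basicSDE}) is $-\wh X + \e^2 \sum_i (\wh H(u(e_i)))^2$, which projects under $\wh\pi$ to $\LL'_\e = -\ov X + \e^2\Delta^s$ on $S\wt M$, so that $\wh\pi\circ\vf_{\e,t}$ realizes the $\LL'_\e$-diffusion. Because $\LL'_\e$ is a nonzero constant multiple of $\LL_\rho$, the measure $\wt{\bf m}_\rho$ is stationary for the semigroup $P^\e_t$ generated by $\LL'_\e$. Given a $C^2$ compactly supported $f$ on $\OO^s(S\wt M)$, set $F(v) := \int_{\wh\pi^{-1}(v)} f \, d\mathrm{Haar}$, so that $\int F\, d\wt{\bf m}_\rho = \int f\, d\wh{\bf m}_\rho$ by the construction of $\wh{\bf m}_\rho$. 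The crucial input is the $O(m)$-equivariance in distribution
\[
\vf_{\e,1}(u\cdot g) \;\overset{d}{=}\; \vf_{\e,1}(u)\cdot g, \qquad g\in O(m),
\]
which follows from the $R_g$-invariance of $\wh X$ and of the horizontal distribution together with the fact that $g^T B$ is again a standard $m$-dimensional Brownian motion. Parameterizing the fiber over $v$ by $g\mapsto u_0\cdot g$ and invoking this equivariance and Fubini,
\[
\int_{\wh\pi^{-1}(v)} \E\bigl[f(\vf_{\e,1}(u))\bigr]\, d\mathrm{Haar}(u) \;=\; \E\bigl[F\bigl(\wh\pi(\vf_{\e,1}(u_0))\bigr)\bigr] \;=\; (P^\e_1 F)(v),
\]
and integrating against $\wt{\bf m}_\rho$ together with its $P^\e_1$-stationarity produces the desired identity for $\wh{\bf m}_\rho$.

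For the absolute continuity of the leafwise conditionals, each leaf of $\wh\W^s$ is the principal $O(m)$-bundle of orthonormal frames over a leaf $\wt M\times\{\xi\}$ of $\wt\W^s$, and its natural Lebesgue measure disintegrates as Riemannian volume on the base leaf tensored with Haar on $O(m)$. Proposition~\ref{harmonic-mea}~(iii) supplies a density $k_\rho(\cdot,\xi)$ for the conditional of $\wt{\bf m}_\rho$ on $\wt M\times\{\xi\}$, while the construction of $\wh{\bf m}_\rho$ imposes Haar on each fiber; hence the leafwise conditional of $\wh{\bf m}_\rho$ has density $k_\rho(\cdot,\xi)$ with respect to Lebesgue on the $\wh\W^s$-leaf, establishing absolute continuity.

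The main obstacle I anticipate is the verification of the $O(m)$-equivariance statement: the diffusion coefficient $\wh H(u_t(e_i))$ in~(\ref{basicSDE}) depends explicitly on the moving frame $u_t$, and right translation by $g$ simultaneously reshuffles the vector fields $\wh H(u(e_i))$ and the components $B^i$ of the driving Brownian motion. The Stratonovich form of the SDE makes this essentially a chain-rule computation, reducing to the identities $R_{g*}\wh X = \wh X$, $R_{g*}\wh H_u(w) = \wh H_{u\cdot g}(w)$, and $g^T g = I$; the verification is mechanical, but it is the one nontrivial ingredient in an otherwise straightforward reduction.
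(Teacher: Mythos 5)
Your proof is correct and follows essentially the same route as the paper's: both reduce stationarity of $\wh{\bf m}_\rho$ to the stationarity of $\wt{\bf m}_\rho$ (which holds since $\LL'_\e$ is a positive multiple of $\LL_\rho$, indeed $\LL'_\e = \e^2 \LL_\rho$) combined with the fact that the stochastic flow commutes with the right $O(m)$-action on frames — you spell out this $O(m)$-equivariance as a chain-rule computation in the Stratonovich SDE, whereas the paper simply cites \cite[Lemma~3.1]{CE} for the same fact — and then average over fibers via Fubini. The absolute continuity argument is likewise the one in the paper: the $\wh\W^s$-leafwise conditional is the fiberwise-Haar extension of the $\wt\W^s$-leafwise conditional, which is absolutely continuous by Proposition~\ref{harmonic-mea}~(iii).
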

\begin{proof}The stationarity follows from relation (\ref{stationarity}), the stationarity of  $\wt {\bf m}_\rho $ and the fact that the flow preserves the orthogonal group on the fibers (\cite[Lemma 3.1]{CE}).  The leaves of $\wh \W^s$ are made of whole fibers and project on the leaves of $\wt\W^s$. The conditional measures on the leaves of $\wh \W^s$ are given by the extension by Lebesgue on the fibers of the conditional measures on the leaves of $\wt\W^s.$ By Proposition \ref{harmonic-mea} iii), they are therefore absolutely continuous. \end{proof}

Let $\pi : \OO^s (SM) \to SM$ be the quotient of the map $\wh\pi$ by the action of $G$ and let  $\ov\W^s=\{\ov W^s(u)\}_{u\in O^s(SM)}$ denote  the corresponding quotient  foliation of $\wh \W ^s$.  Let  $D^\infty (\OO^s SM) $ be the space of homeomorphisms  of $\OO^s ( SM)$ that preserve the leaves of $\ov\W^s$ and are $C^\infty $-diffeomorphisms along the leaves. We endow $D^\infty (\OO^s SM) $ with the $C^{0,\infty} $ topology: $\vf,\vf' \in D^\infty (\OO^s SM) $ are close if, for all $r >0$, the $r$-germs  of $\vf$ and $\vf'$ are uniformly close  and the $r$-germs  of $\vf^{-1}$ and $(\vf')^{-1}$ are uniformly close.  By Proposition \ref{stochasticflow} iii),  we can consider $\nu_{\rho}$ as a probability measure on $D^\infty (\OO^s SM) $.

We define the measure  $\ov {\bf m}_\rho$ on $\OO^s (SM) $ such that its $G$-invariant extension to $\OO^s (S\M)$ is $\wh{\bf m}_{\rho}$.  We see that  $\ov {\bf m}_\rho$ is a probability measure that projects to  
${\bf m}_\rho $ on $SM$ and is such that the conditional measures on fibers of $\pi$ are proportional to Lebesgue on $m$-dimensional  frames.  As a consequence of Proposition \ref{bigmeasure}, we have

\begin{cor}\label{small-measure} The measure $\ov {\bf m}_\rho $ is stationary under $\nu _\rho$, i.e.,  it satisfies, for any continuous function $f$ on $\OO ^s(SM)$, 
\begin{equation}\label{stationary-bar}\int f (\vf u) \, d\nu _\rho (\vf) \, d\ov {\bf m}_\rho (u) \; = \; \int f(u) \,  d\ov {\bf m}_\rho (u) .\end{equation}
Moreover, the conditional measures $\ov {\bf m}_{\rho, u}^s $ of $\ov {\bf m}_\rho $ with respect to the leaves of the $\ov \W^s $ foliation are absolutely continuous with respect to Lebesgue. \end{cor}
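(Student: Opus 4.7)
The plan is to descend Proposition \ref{bigmeasure} through the $G$-action. By the equivariance in Proposition \ref{stochasticflow} iii), each $\vf_{\e,1}(\om)$ commutes with $G$ and hence descends to a well-defined element of $D^\infty (\OO^s SM)$; this is already how $\nu_\rho$ was interpreted as a measure on $D^\infty (\OO^s SM)$ in the paragraph preceding the statement. Similarly, $\ov{\bf m}_\rho$ is defined precisely so that its $G$-invariant lift is $\wh{\bf m}_\rho$. Both sides of the identity (\ref{stationary-bar}) therefore lift naturally to $\OO^s(S\wt M)$, and the task is to match them using the stationarity already established upstairs in Proposition \ref{bigmeasure}.

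For the stationarity, fix a continuous (hence bounded) $f$ on the compact space $\OO^s(SM)$. Choose a continuous cutoff $\Phi$ on $\OO^s(S\wt M)$, supported in a relatively compact fundamental domain for $G$, with $\sum_{\psi \in G}\Phi(\psi u) \equiv 1$. Writing $\pi : \OO^s(S\wt M)\to \OO^s(SM)$ for the projection, one has by definition of $\ov{\bf m}_\rho$
\[
\int_{\OO^s(SM)} f(u)\, d\ov{\bf m}_\rho(u) \; = \; \int_{\OO^s(S\wt M)} \Phi(\wh u)\, f(\pi(\wh u))\, d\wh{\bf m}_\rho(\wh u),
\]
and, by the $G$-equivariance in Proposition \ref{stochasticflow} iii) together with $G$-invariance of $\wh{\bf m}_\rho$, the same representation applies to $\int f(\vf u)\, d\nu_\rho(\vf)\,d\ov{\bf m}_\rho(u)$. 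Applying Proposition \ref{bigmeasure} to a $C^2$ compactly supported approximation of $\Phi\cdot (f\circ \pi)$ and then passing to the limit yields (\ref{stationary-bar}).

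For the absolute continuity of the conditionals $\ov{\bf m}_{\rho,u}^s$, note that the leaves of $\ov\W^s$ are the projections under $\pi$ of the leaves of $\wh\W^s$, each such projection being a quotient by a discrete (possibly trivial) subgroup of $G$ which acts isometrically for the induced Riemannian metric. By Proposition \ref{bigmeasure} the conditional measures of $\wh{\bf m}_\rho$ along leaves of $\wh\W^s$ are absolutely continuous with respect to Lebesgue; pushing forward by such a discrete isometric quotient preserves this property on the leaves of $\ov\W^s$.

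The proof is essentially formal once Proposition \ref{stochasticflow} iii) is in hand; the only point I expect to require mild care is the bookkeeping of the fundamental-domain/partition-of-unity identification, since Proposition \ref{bigmeasure} is stated for $C^2$ compactly supported test functions on the non-compact space $\OO^s(S\wt M)$ while (\ref{stationary-bar}) is stated for continuous test functions on the compact quotient $\OO^s(SM)$.
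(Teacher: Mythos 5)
Your route---descend Proposition \ref{bigmeasure} through the $G$-action using the equivariance in Proposition \ref{stochasticflow} iii) and the definition of $\ov{\bf m}_\rho$ as the measure on $\OO^s(SM)$ whose $G$-invariant lift is $\wh{\bf m}_\rho$---is exactly the route the paper has in mind; the paper states this corollary with no separate argument, treating it as an immediate $G$-quotient of Proposition \ref{bigmeasure}, and your write-up spells out the bookkeeping that is being taken for granted. However, as written there is one elided algebraic step. Applying Proposition \ref{bigmeasure} to a $C^2$ compactly supported approximation of $\Phi\cdot(f\circ\pi)$ produces
$\int \Phi\cdot (f\circ\pi)\, d\wh{\bf m}_\rho \;=\; \int\!\big(\int \Phi(\wh\vf\,\wh u)\, f(\pi(\wh\vf\,\wh u))\, d\nu_\rho(\wh\vf)\big)\, d\wh{\bf m}_\rho(\wh u)$,
whereas the representation you invoke for the left-hand side of \eqref{stationary-bar} yields
$\int \Phi(\wh u)\big(\int f(\pi(\wh\vf\,\wh u))\, d\nu_\rho(\wh\vf)\big)\, d\wh{\bf m}_\rho(\wh u)$;
these differ by whether the cutoff $\Phi$ is evaluated at $\wh u$ or at $\wh\vf(\wh u)$.

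To close that gap one needs a ``move-the-cutoff'' observation, which is where $G$-equivariance and $G$-invariance actually enter: for each fixed $\wh\vf$, the map $\wh u\mapsto f(\pi(\wh\vf\,\wh u))$ is $G$-invariant (since $\wh\vf$ commutes with $G$ and $\pi$ is $G$-invariant), and $\wh u\mapsto\Phi(\wh\vf\,\wh u)$ is again a nonnegative function with $\sum_{\psi\in G}\Phi(\wh\vf(\psi\wh u))=\sum_{\psi}\Phi(\psi\,\wh\vf\,\wh u)=1$; since $\wh{\bf m}_\rho$ is $G$-invariant, $\int\Phi_1 h\,d\wh{\bf m}_\rho=\int\Phi_2 h\,d\wh{\bf m}_\rho$ for any $G$-invariant $h$ and any two such unit-$G$-sum cutoffs $\Phi_1,\Phi_2$. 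Fubini then gives the needed identity. This is routine, but it is the real content of your clause ``the same representation applies,'' and it should be displayed. The absolute-continuity paragraph is fine as a matter of content (the conditional measures are locally defined, and the leaves of $\ov\W^s$ are local-isometry quotients of those of $\wh\W^s$), and one small cosmetic point: a continuous $\Phi$ with $\sum_\psi\Phi(\psi\cdot)\equiv 1$ cannot literally be supported inside a fundamental domain; it must be supported in a slightly larger open set.
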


We are interested in the limit measures of ${\bf m}_{\rho}$'s when $\rho$ goes to $-\infty$. Let ${\bf m}$ be such a limit point and let $\ov{\bf m}$ be the probability measure on $\OO^s ( SM)$ that projects to  
${\bf m}$ on $SM$ and is  such that the conditional measures on fibers of $\pi$ are proportional to the  Lebesgue measure  on $m$-dimensional  frames. Then  $\ov{\bf m}$  is the limit of $\ov{\bf m}_{\rho}$ along the same subsequence.   Let $\{\ov {\bf \Phi }_{-t}\}_{t\in \Bbb R}$ be the reversed stable frame flow.  Then  $\ov{\bf m}$ is invariant under $\ov {\bf \Phi }_{-t}$.  To show ${\bf m}$ is Liouville, it suffices to show the conditional measures of $\ov{\bf m}$ on the leaves of $\ov  \W^s$ are absolutely continuous with respect to Lebesgue. But this does not follow from Corollary \ref{small-measure}  by the same reason that we mentioned in Remark \ref{rem-1}. What we are going to  do in the next section  is to analyze the entropy related to the natural  random dynamics for $\ov {\bf m}_\rho$ that arises in the stationarity relation  (\ref{stationary-bar}).

\section{Entropy of random mappings}

 We    consider the action on $\OO^s (SM)$ of the  random elements of  $D^\infty (\OO^s SM) $ with distribution $\nu _\rho , -\infty \leq \rho < 0.$ Namely, let $ \mathcal{S} := (D^\infty (\OO^s SM) )^{\N\cup \{0\}},$ 
 endowed with the product measures $\nu _\rho^{\otimes \N\cup \{0\}}$ (with the convention that $\nu _{-\infty} $ is the Dirac measure at $\ov {\bf \Phi} _{-1}$) and the shift transformation $\s$. On the space $\mathcal{T}:= \mathcal{S} \x \OO^s (SM),$ define the transformation $\tau$ by:
\[ \tau (\underline \vf , {u}) \; := \; (\s \underline \vf , \vf _0 {u}) .\]

For $ -\infty < \rho < 0, $ let $ \ov {\bf m}_\rho $ be the stationary measure from Corollary~\ref{small-measure} and for $ \rho = -\infty, $ let $ \ov {\bf m}_{- \infty } = \ov {\bf m}  $  be some weak* limit of $\ov {\bf m}_\rho $ as $\rho \to - \infty.$ For $-\infty \leq \rho <0 ,$ the measure $\mu _\rho := \nu _\rho^{\otimes \N\cup \{0\}} \otimes \ov{\bf m}_\rho $ is invariant under the transformation $\tau.$  

  Let $\mathcal{P}$ be a measurable partition of $\T$ with finite or countably many elements. We assume 
$-\int \log (\ov{\bf m}_{\rho}(\mathcal{P}))\ d\mu_{\rho}<+\infty$.   For   $n\in \Bbb N$, set $\mathcal{P}_{-1}=\mathcal{P}$
  and $\mathcal{P}_{-n}:=\mathcal{P}\bigvee \tau^{-1}\mathcal{P}\bigvee\cdots\bigvee\tau^{-(n-1)}\mathcal{P}$ for $n>1$, where $\bigvee$ denotes the join of partitions, i.e., the refinement of partitions by taking intersections. For $(\underline\vf, u)\in \T$, let  
$\mathcal{P}_{-n}(\underline\vf, u)$ denote the element of $\mathcal{P}_{-n}$ that contains $(\underline\vf, u)$.  We define the entropy $h_{\rho}^s$ for $\ov{\bf m}_\rho$ as 
\[
h^s_\rho := \sup_{\PP}\underline{h}_{\rho, \mathcal{P}}^s,
\]
where
\begin{equation}\label{underline-h-rho}
\underline{h}_{\rho, \mathcal{P}}^s:=\liminf_{n\to +\infty} -\frac{1}{n} \int \log \ov{\bf m}_{\rho, u}^s \left(\mathcal{P}_{-n}(\underline\vf, u)\right) \, d\mu _\rho (\underline \vf, u).
\end{equation}
For a formal definition of $\ov{\bf m}_{\rho, u}^s$, we  should use a measurable partition $\mathcal{R}$ subordinated to $\ov \W^s$  (see Section 4 for details). But the value of $\underline{h}_{\rho, \mathcal{P}}^s$ does not depend on the choice of   such a  subordinated  partition and is thus well-defined.  Observe that
\[- \int \log \ov{\bf m}_{\rho, u}^s \left(\mathcal{P}_{-n}(\underline\vf, u)\right) \, d\mu _\rho (\underline \vf, u)
 \leq -\int \log \ov{\bf m}_{\rho} \left(\mathcal{P}_{-n}(\underline\vf, u)\right) \, d\mu _\rho (\underline \vf, u).\] Using the random Ruelle  inequality (cf. \cite{BB, Ki2}),  we obtain  that $\underline{h}_{\rho, \mathcal{P}}^s$ is bounded independent of $\mathcal{P}$. Hence $h^s_\rho$ is   finite.  Note also that  $\ov{\bf m}_{\rho,u}^s$ is absolutely continuous with respect to Lebesgue with a smooth density.

For the computation of $\underline{h}_{\rho, \mathcal{P}}^s$, we can restrict the conditional measure $\ov{\bf m}_{\rho,u}^s$ to  the local stable leaf  $\ov {W}^s_{loc, \epsilon} (u):= \{ w \in \ov{W}^s (u) :\  d_{\ov{W}^s}(w,u) < \epsilon\}$ for $\epsilon$ small enough. Recall that  $\vf \in D^{\infty } (\OO^s SM)$ preserves each leaf $\ov W^s (u) $  and is a $C^\infty $ diffeomorphism along it. Write $J(\vf ,u)$ for the Jacobian determinant of  the tangent map  of $\vf \large|_{\ov W^s (u)} $ at $u$.  We will conclude Theorem \ref{mainth} from the following two propositions.
\begin{prop}\label{Thieullen} For $ -\infty < \rho < 0$, 
\begin{equation}\label{equ-Thieullen}
h^s_{\rho}\geq \int  \log J(\vf , u) \, d\nu_\rho (\vf) \ d\ov{\bf m}_\rho (u).
\end{equation}
\end{prop}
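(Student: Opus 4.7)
Inequality (\ref{equ-Thieullen}) is the lower-bound half of a Pesin-type formula for the random dynamical system $(\tau,\mu_\rho)$: since the stationary measure has absolutely continuous conditionals on the leaves of $\ov\W^s$ (Corollary~\ref{small-measure}), its fiberwise entropy $h^s_\rho$ is at least the mean log-Jacobian of the cocycle along those leaves. My plan is to exhibit a single subordinate measurable partition $\mathcal{P}$ of $\T$ whose entropy rate in (\ref{underline-h-rho}) realises the right-hand side of (\ref{equ-Thieullen}).

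First, construct $\mathcal{P}$ by covering $\OO^s(SM)$ with finitely many flow boxes for $\ov\W^s$ and, inside each box, fixing a measurable transversal together with a smooth partition of the local stable leaf $\ov W^s_{loc,\epsilon}(u)$ into precompact atoms with $\ov{\bf m}^s_{\rho,u}$-null boundary. Lifting trivially to $\T$ (so $\mathcal{P}$ is independent of $\underline\vf$) yields a measurable partition subordinated to $\ov\W^s$. The smooth, strictly positive leafwise density of $\ov{\bf m}^s_{\rho,u}$ (Corollary~\ref{small-measure}) gives the integrability $-\int\log\ov{\bf m}_\rho(\mathcal{P})d\mu_\rho<+\infty$ needed for $\underline h^s_{\rho,\mathcal{P}}$ to be well-defined.

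Write $\Psi_n(\underline\vf,\cdot):=\vf_{n-1}\circ\cdots\circ\vf_0$ for the $n$-step cocycle. On the fiber $\{\underline\vf\}\x\OO^s(SM)$ the atom $\mathcal{P}_{-n}(\underline\vf,u)$ sits inside the preimage $\Psi_{n-1}(\underline\vf,\cdot)^{-1}\bigl(\mathcal{P}(\tau^{n-1}(\underline\vf,u))\bigr)$, which is small because $\vf_0$ is close to the map $\ov{\bf\Phi}_{-1}$ expanding $\ov W^s$ (Proposition~\ref{con-geo-flow} i)). The $L^1$-control of the $C^r$-norms of $\vf_k$ (Proposition~\ref{stochasticflow} v) and Proposition~\ref{con-geo-flow} ii)) yields a distortion bound for $\Psi_n$ on such small preimages; combined with the smoothness of the leafwise density of $\ov{\bf m}^s_{\rho,u}$, a Lebesgue change of variables then gives
\[ -\log\ov{\bf m}^s_{\rho,u}\bigl(\mathcal{P}_{-n}(\underline\vf,u)\bigr)\;\geq\;\sum_{k=0}^{n-1}\log J(\vf_k,\Psi_k(\underline\vf,u))\;+\;R_n(\underline\vf,u), \]
where $R_n$ absorbs the density ratio and the $-\log$ of the Lebesgue measure of the atom at $\tau^n(\underline\vf,u)$; by $\tau$-invariance of $\mu_\rho$ and the integrability of $\log\ov{\bf m}_\rho(\mathcal{P})$, it satisfies $\sup_n\|R_n\|_{L^1(\mu_\rho)}<\infty$. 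Dividing by $n$, integrating against $\mu_\rho$, and applying the Birkhoff ergodic theorem to the additive cocycle $\log J(\vf_0,u)$ under $(\tau,\mu_\rho)$, the sum averages to $\int\log J(\vf,u)\,d\nu_\rho(\vf)\,d\ov{\bf m}_\rho(u)$ while $R_n/n\to 0$ in $L^1(\mu_\rho)$. Hence $\underline h^s_{\rho,\mathcal{P}}\geq\int\log J\,d\nu_\rho\,d\ov{\bf m}_\rho$, which gives (\ref{equ-Thieullen}).

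The principal obstacle is the distortion estimate for $\Psi_n$: the maps $\vf_k$ are random $C^\infty$-diffeomorphisms whose $C^r$-norms belong only to $L^1(\nu_\rho)$, not uniformly. I expect this to force an exhaustion of $\T$ by random Pesin blocks on which the Lyapunov norms are bounded, proving the distortion estimate there and controlling the complement via the integrability of $\log J$ and the random Ruelle inequality. The absolute continuity of $\ov{\bf m}^s_{\rho,u}$ from Corollary~\ref{small-measure} is essential at the change-of-variables step: without it, only the opposite, Ruelle-type inequality $h^s_\rho\leq\int\log J$ would follow.
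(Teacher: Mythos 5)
Your overall strategy is the Ma\~{n}\'{e} one -- relate $-\log\ov{\bf m}^s_{\rho,u}(\mathcal{P}_{-n})$ to $\log J(\underline\vf|_n,u)$ via a change of variables, then apply Birkhoff -- and this is indeed the strategy of the paper. However the proposal has a genuine gap precisely at the step you flag as the "principal obstacle," and a generic flow-box partition cannot work there.

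The problem is that the distortion estimate is not a side remark but the whole argument, and your partition $\mathcal{P}$ is not built to make it true. For fixed $\rho\in(-\infty,0)$ the individual maps $\vf_k$ are \emph{not} close to the expanding map $\ov{\bf\Phi}_{-1}$ (Proposition~\ref{con-geo-flow}~i) is a statement as $\rho\to-\infty$, not for fixed $\rho$), and they can contract along $\ov W^s$ on sets of positive probability. Thus the atoms $\mathcal{P}_{-n}(\underline\vf,u)$ of a fixed finite flow-box partition do not shrink at a rate that keeps the distortion of $\log J(\Psi_n,\cdot)$ bounded along the trajectory; the correct statement is a non-uniform one depending on the Lyapunov block the orbit visits, and it requires a partition whose elements have diameters adapted to the trajectory (Ma\~{n}\'{e}'s countable partition built from return times to a good set), not a fixed finite one. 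This is why the $R_n$ you introduce cannot be shown to be bounded in $L^1$ by the argument you give: the distortion contribution to $R_n$ is uncontrolled for your $\mathcal{P}$. The paper meets this by introducing Pesin charts for the random cocycle (Lemma~\ref{lem-LQ}), and then -- because the system is non-invertible and random, so the tempering function $\kappa$ is \emph{not} $\tau$-invariant -- a further device: modified $\ov\W^s$-Bowen balls whose norms reset at return times to a set ${\rm A}$ of measure close to $1$ (\ref{new-norm-1})--(\ref{mod-Bowen-ball-1}). The resulting estimate (\ref{Fo-Mane}) contains an extra constant ${\bf c}_0$ (coming from the resetting), which is not small; it is killed by passing to the power map $\tau^{\mathsf M}$ for $\mathsf M$ large and using that ${\bf c}_0$ is independent of $\mathsf M$ (Lemma~\ref{follow-Mane-2} and Proposition~\ref{new-prop4.3}). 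None of these steps is anticipated in your sketch, and the constant ${\bf c}_0$ in particular would defeat a direct Birkhoff-plus-remainder argument of the form you propose. Finally, even after obtaining the countable partition $\mathcal{Q}$ with the right local estimate, the paper must still truncate its tail to produce a \emph{finite} $\mathcal{P}$ with $\underline h^s_{\rho,\mathcal{P}}$ close to $\underline h^s_{\rho,\mathcal{Q}}$; your sketch starts from a finite $\mathcal{P}$ and therefore skips the place where the actual work is done.
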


\begin{prop}\label{mainprop}Let $\rho_p, p\in \N,$   be a sequence such that  $\rho_p\to -\infty$ and  ${\bf m}_{\rho_p}$ converge  to the probability measure ${\bf m}$ as $p \to +\infty $,  and let ${\ov{\bf m}}$ be as above.  Then
\begin{equation*}\label{main-equ}
h_{\ov{\bf m}}^s:= h^s_{-\infty} \geq \limsup_{p \to +\infty } h_{\rho_p}^s. 
\end{equation*}
\end{prop}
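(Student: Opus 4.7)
The plan is to establish this upper semi-continuity by following the Cowieson--Young \cite{CY} and Kifer--Yomdin \cite{KY} framework, adapted to our random dynamical system on $\OO^s(SM)$ with relative (foliated) entropy. Fix $\epsilon>0$. The first step is to construct a fixed measurable partition $\mathcal{Q}$ of $\OO^s(SM)$ --- obtained by refining a partition subordinated to $\ov\W^s$ by finitely many ``cubes'' of diameter at most $\delta=\delta(\epsilon)$ with piecewise smooth boundaries of $\ov{\bf m}$-null closure --- such that for all sufficiently large $p$,
\[ h^s_{\rho_p}\;\leq\;\underline{h}_{\rho_p,\mathcal{Q}}^s+\epsilon. \]
This relies on a Kifer--Yomdin-type inequality: for any measurable partition $\PP$ with finite relative entropy, one has $\underline{h}_{\rho,\PP}^s\leq\underline{h}_{\rho,\mathcal{Q}}^s+C(\delta,r,\mathsf M)$, where $C$ is controlled by a power of $B_{r,\mathsf M}$ and is made smaller than $\epsilon$ by taking $\delta$ small and $r,\mathsf M$ large. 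The point is that the uniform bound $B_{r,\mathsf M}=\limsup_\rho B_{r,\mathsf M}(\rho)<\infty$ from Proposition~\ref{con-geo-flow}(ii) makes this error uniform in $\rho$, replacing the deterministic $C^r$-norm control of the classical Newhouse--Yomdin argument.

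For the second step, with $\mathcal{Q}$ fixed, we show
\[ \limsup_p\,\underline{h}_{\rho_p,\mathcal{Q}}^s\;\leq\;\underline{h}_{-\infty,\mathcal{Q}}^s. \]
For each $n$, the integrand $F_n(\underline\vf,u)=-\tfrac{1}{n}\log \ov{\bf m}_{\rho,u}^s\bigl(\mathcal{Q}_{-n}(\underline\vf,u)\bigr)$ is continuous in $(\underline\vf,u)$ off a $\mu_\rho$-null set, because the random orbit $(\vf_{k-1}\cdots\vf_0 u)_k$ depends continuously on $\underline\vf$ and $\mathcal{Q}$ has null boundaries. Using the weak-$*$ convergences $\ov{\bf m}_{\rho_p}\to\ov{\bf m}$ and $\nu_{\rho_p}\to\delta_{\ov{\bf\Phi}_{-1}}$ (Proposition~\ref{con-geo-flow}(i)), combined with uniform integrability coming from the Jacobian convergence of Proposition~\ref{con-geo-flow}(iv) and Corollary~\ref{small-measure}, one passes the limit through $\int F_n\,d\mu_{\rho_p}$ for each fixed $n$. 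Taking $\liminf$ in $n$ and combining the two steps yields
\[ \limsup_p h^s_{\rho_p}\;\leq\;\underline{h}_{-\infty,\mathcal{Q}}^s+\epsilon\;\leq\;h^s_{-\infty}+\epsilon, \]
and letting $\epsilon\to 0$ concludes the proof.

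The principal obstacle is Step~1: the Kifer--Yomdin reduction from an arbitrary near-optimal partition to a fixed geometric one, with the error controlled \emph{uniformly} in $\rho\to-\infty$. In the deterministic $C^\infty$ setting this rests on Yomdin's bound for local volume growth by $C^r$ norms; here it must be replaced by an expectation bound, and uniformity across the family $\{\nu_\rho\}$ is delicate since $\nu_\rho$ degenerates to a Dirac mass in the limit. The uniform control $B_{r,\mathsf M}<\infty$ from Proposition~\ref{con-geo-flow}(ii) is exactly what makes the classical argument extend. A secondary technical point is that the leaf conditionals $\ov{\bf m}_{\rho,u}^s$ may lose absolute continuity as $\rho\to-\infty$ (indeed, establishing absolute continuity of the limit is the whole point of the paper), so Step~2 must be formulated directly in terms of partition measures rather than densities; the fact that atoms of $\mathcal{Q}_{-n}$ are built from sets with $\ov{\bf m}$-null boundaries is what allows the passage to the limit for each fixed $n$.
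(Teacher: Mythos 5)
Your overall two-step architecture is the right one and matches the paper's strategy: (1) reduce $h^s_{\rho}$ to $\underline{h}^s_{\rho,\PP}$ for a fixed geometric partition with a defect term controlled uniformly in $\rho$ by the moment bounds $B_{r,\mathsf M}$, via a Bowen/Yomdin local-entropy argument (Propositions \ref{Bowen}--\ref{volumegrowth} and Corollary \ref{yomdin} in the paper); and (2) pass the fixed partition's entropy through the weak-$*$ limit using the null-boundary property. You have also correctly identified that the loss of absolute continuity at the limit forces one to work with partition measures $H_{\mu_\rho}(\PP_{-n}|\RR)$ rather than with densities.

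However, your Step~2 has a genuine gap at the interchange of limits in $p$ and $n$. From the convergence $\int F_n\,d\mu_{\rho_p}\to\int F_n\,d\mu_{-\infty}$ \emph{for each fixed} $n$, one cannot conclude $\limsup_p\liminf_n\frac{1}{n}\int F_n\,d\mu_{\rho_p}\leq\liminf_n\frac{1}{n}\int F_n\,d\mu_{-\infty}$: the $\liminf$ over $n$ for each $p$ may be attained at different (and diverging) values of $n$, and the convergence in $p$ is not uniform over $n$. The standard way out would be to use subadditivity of $n\mapsto H_{\mu_\rho}(\PP_{-n}|\RR)$, which would turn $\liminf_n$ into $\inf_n$ and let one bound by the value at a fixed $n_0$; but precisely because $\RR$ is \emph{not} $\tau$-invariant, this sequence is \emph{not} subadditive. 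This is exactly the obstruction the paper addresses with Lemma~\ref{subdivide} and Proposition~\ref{almostsub-add}: one constructs a countable partition $\QQ$ with $H_{\mu_\rho}(\QQ)<\a$ (uniformly for $\rho$ near $-\infty$ and small $\d$) such that $\RR\bigvee\tau^{-1}\PP\bigvee\tau^{-1}\QQ$ refines $\tau^{-1}\RR$, making $n\mapsto H_{\mu_\rho}(\PP_{-n}\bigvee\QQ_{-n}|\RR)$ genuinely subadditive and yielding the quasi-subadditivity estimate $\frac{1}{n}H_{\mu_\rho}(\PP_{-n}|\RR)\geq\underline{h}^s_{\rho,\PP}-\a$ for \emph{every} $n$. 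With this in hand the paper fixes a single $n$ adapted to $\ov{\bf m}$, passes the weak-$*$ limit at that $n$, and then closes the loop. Without this (or an equivalent) device, the limit swap in your Step~2 does not go through, and the proof is incomplete.
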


The proofs  of  Proposition \ref{Thieullen}  and Proposition \ref{mainprop} use completely different techniques and will be presented in this section and the following section, respectively. 

In the following, we shall use $H_{\vartheta}(\mathcal{A})$ to denote the entropy of a measurable partition $\mathcal{A}$ with respect to a measure $\vartheta$ of some space and use $H_{\vartheta}(\mathcal{A}|\mathcal{B})$ to denote the entropy of $\mathcal{A}$  conditioned on some measurable partition $\mathcal{B}$,  whenever these entropies  are well-defined.   We shall denote $\ov{m}$  for the dimension of  $\ov W^s $;  for $(\underline \vf , {u})\in \mathcal{T}$, we shall  write  
\[\underline\vf|_0={\rm Id}\ \ {\rm and}\ \ \underline\vf|_n=\vf _{n-1} \circ \cdots \circ \vf _0, \ \forall n\geq 1,\]
and $J(\underline\vf|_n , u)$ for the Jacobian determinant of  the tangent map  of $\underline\vf|_n\large|_{\ov W^s (u)} $ at $u$. Clearly, we have  $J(\underline\vf|_1, u)=J(\vf_0, u)$ for $\underline\vf=(\vf_0,\vf_1,\cdots)\in  \mathcal{S}$.

\begin{proof}[Proof of Theorem \ref{mainth}]
Let $\rho_p, p\in \N$,  be a sequence such that $\rho_p\to -\infty$ and ${\bf{m}}_{\rho_p}$ converge  to the probability measure $\bf m$ as $p\to +\infty$,  and let ${\ov{\bf m}}$ be as above. Recall that $\ov {{\bf\Phi}}_{-1}$ is the time one map of the reversed frame  flow on $\OO^s (SM)$ which is a compact isometric extension of the time one map of the reversed geodesic flow ${\bf\Phi}_{-1}$ on $SM.$  Hence, 
\[
h_{\bf m}=h_{\ov{\bf m}}. 
\]

On the other hand, we have:
\[ h_{\ov {\bf m}} \;=\; \sup_{\PP} \lim_{n\to +\infty} \frac{1}{n} H_{\ov{\bf m}} (\PP _{-n}) \;\geq \; \sup_{\PP} \liminf_{n\to +\infty} -\frac{1}{n} \int \log \ov{\bf m}_{u}^s \left(\mathcal{P}_{-n}( u)\right) \, d\ov{\bf m}(u)\;=\;h^s_{\ov {\bf  m}}.\]
Assume Proposition \ref{Thieullen} and Proposition  \ref{mainprop} hold true. Then, 
\[
h_{\ov{\bf m}}^s\geq \limsup\limits_{p\to +\infty}\int  \log J(\vf , u) \, d\nu_\rho (\vf)\,  d\ov{\bf m}_{\rho_p} (u)=\int  \log J(\ov{\bf\Phi}_{-1}, u) \, d\ov{\bf m}(u),
\]
where the last equality holds by Proposition \ref{con-geo-flow} iii). Altogether, we obtain
\[
h_{\bf m}\geq \int  \log J(\ov{\bf\Phi}_{-1}, u) \, d\ov{\bf m}(u). 
\]
Note that $\ov{\W}^s$ is the central unstable foliation for $\ov{\bf\Phi}_{-1}$, so that  $\int  \log J(\ov{\bf\Phi}_{-1}, u) \, d\ov{\bf m}(u)$ is the integral of the sum of the nonnegative exponents of $\ov {{\bf\Phi}}_{-1}$ for $\ov{\bf m}$; neither the direction of the flow nor the vertical directions tangent to the fibers provide positive exponents,   so that $\int  \log J(\ov{\bf\Phi}_{-1}, u) \, d\ov{\bf m}(u)$ is the integral of the sum of the positive  exponents of ${\bf\Phi}_{-1}$ for $\bf m$.   By \cite {BR}, $\bf m $ is the normalized  Liouville  measure. 
\end{proof}

\subsection {Proof of Proposition \ref{Thieullen}}
The deterministic diffeomorphism version estimation of (\ref{equ-Thieullen}) is standard using Pesin theory (cf. \cite{Mane}).  But this cannot be used directly  since we are in the random and non-invertible case.

Clearly,  Proposition  \ref{Thieullen}  would follow  if we can show the sample measures are SRB. This approach might work  since in a  similar context, Blumenthal-Young (\cite{BY}) showed the sample measures are SRB. We didn't try that way since we don't  need that strong conclusion and the intuition for  Proposition \ref{Thieullen} is relatively simpler.

For  a non-invertible  endomorphism of a compact manifold preserving an absolutely continuous measure, the corresponding measure theoretical  entropy is at least the integral of the logarithm of the Jacobian, which coincides with  the so-called folding entropy (cf. \cite{R}, \cite{LiS11}).  Proposition \ref{Thieullen} is intuitively a random conditional  version of this phenomenon. But it might be subtle since we are considering the conditional measures and are in the random case.  So we will give some details for the key steps.

 We  first  recall some notations and results concerning Pesin  local Lyapunov charts theory for  random diffeomorphisms. In many places, we have to take invariant variables  instead of constants since our system $(\mathcal{T}, \tau,  \mu _\rho)$  is invariant, but not necessarily ergodic in general.

\begin{lem}\label{lem-Os}(\cite{Os}) For each $\rho<0$,  there is a measurable ${\bf \Om}\subset \T$ with $\mu_{\rho}({\bf \Om})=1$ such that for $(\underline\vf, u)\in {\bf \Om},$ there exist $r(\underline\vf, u)\in \Bbb N$ and, for $i, 1\leq i\leq r(\underline\vf, u)$,  $\chi_{i}(\underline\vf, u), d_i(\underline\vf, u)$ and a filtration 
\[
\{0\}=V_{r(\underline\vf, u)+1}\subset V_{r(\underline\vf, u)}\subset\cdots \subset V_1=T_u\ov{W}^s(u)
\]
with the following properties:
\begin{itemize}
\item[i)] all of $r, \chi_i, d_i, V_i$'s  depend measurably on $(\underline\vf, u)$;
\item[ii)]$\lim_{n\to +\infty}\frac{1}{n}\log \|D_u(\underline\vf|_n) (e) \|=\chi_i(\underline\vf, u)$ for $e\in V_{i}(\underline\vf, u)\backslash V_{i+1}(\underline\vf, u)$; 
\item[iii)]$d_i(\underline\vf, u)={\rm dim} V_i(\underline\vf, u)-{\rm dim}V_{i+1}(\underline\vf, u)$ and  $\sum_{i=1}^{r(\underline\vf, u)} d_i(\underline\vf, u)=\ov{m}$;
\item[iv)]$\int \sum_{i=1}^{r(\underline\vf, u)} \chi_i(\underline\vf, u) d_i(\underline\vf, u)\ d\mu_{\rho}(\underline\vf, u)=\int \log J(\vf, u)\ d\nu_{\rho}(\vf)\ d\ov{\bf m}_{\rho}(u)$. 
\end{itemize}
\end{lem}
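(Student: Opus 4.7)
The lemma is the Oseledets multiplicative ergodic theorem (MET) applied to the random derivative cocycle along stable leaves. My plan is to set up the cocycle, verify its integrability using the bounds already proved, read off (i)--(iii) from MET, and combine MET with Birkhoff to obtain (iv).

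Set $A_n(\underline\vf,u):=D_u(\underline\vf|_n)|_{T_u\ov W^s(u)}$. Using a measurable orthonormal trivialization of the rank-$\ov m$ bundle $T\ov W^s\to \OO^s(SM)$, $A_n$ becomes a linear cocycle with values in $\mathrm{GL}(\ov m,\R)$ over the measure-preserving system $(\mathcal T,\tau,\mu_\rho)$; invariance of $\mu_\rho$ comes from Corollary~\ref{small-measure} together with the product structure $\mu_\rho=\nu_\rho^{\otimes\N\cup\{0\}}\otimes\ov{\bf m}_\rho$, and $A_{n+1}(\underline\vf,u)=A_1(\tau^n(\underline\vf,u))A_n(\underline\vf,u)$ is the chain rule. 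The integrability hypothesis $\int\log^+\|A_1\|\,d\mu_\rho<+\infty$ is immediate from Proposition~\ref{stochasticflow}~v) at $r=1$, since $\E[\max_u\|\vf_{\e,1}(\om)(u)|_{\wh W^s(u)}\|_{C^1}]<\infty$ descends to $\OO^s(SM)$ and dominates $\log^+\|A_1\|$ pointwise. For (iv) I will also need $\log J(\vf,u)$ integrable from both sides: the upper bound is $\log J\le \ov m\,\log^+\|A_1\|$, and the lower bound reduces to an $L^q$ control on $\|A_1^{-1}\|$, obtained by applying Proposition~\ref{El-Ku}~iv) to the linearized SDE governing $A_1$ (which bounds the smallest singular value of $A_1$ uniformly).

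Granted the integrability, the non-invertible form of Oseledets' MET (\cite{Os}) yields, for $\mu_\rho$-a.e.\ $(\underline\vf,u)$, the integer $r(\underline\vf,u)$, exponents $\chi_1>\cdots>\chi_r$, multiplicities $d_i$ summing to $\ov m$, and the filtration $\{0\}=V_{r+1}\subset\cdots\subset V_1=T_u\ov W^s(u)$ satisfying (ii); these are $\mathcal I$-measurable (where $\mathcal I$ is the $\sigma$-algebra of $\tau$-invariant sets) and in particular measurable on $\mathcal T$, giving (i) and (iii). For (iv) I combine MET with the Birkhoff ergodic theorem. Setting $F(\underline\vf,u):=\log J(\vf_0,u)$, the chain rule gives
\[ \log J(\underline\vf|_n,u)\;=\;\sum_{k=0}^{n-1}\log J(\vf_k,\underline\vf|_k u)\;=\;\sum_{k=0}^{n-1}F\circ\tau^k(\underline\vf,u), \]
so Birkhoff yields $\tfrac1n\log J(\underline\vf|_n,u)\to \E_{\mu_\rho}[F\mid\mathcal I]$ a.e., while Oseledets yields $\tfrac1n\log|\det A_n|\to \sum_i\chi_i d_i$ a.e. Since the trivialization is orthonormal, $|\det A_n|=J(\underline\vf|_n,u)$ exactly, so the two limits coincide $\mu_\rho$-a.e. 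Integrating against $\mu_\rho$ and using Fubini on its product structure yields $\int\sum_i\chi_i d_i\,d\mu_\rho=\int F\,d\mu_\rho=\int\log J(\vf,u)\,d\nu_\rho(\vf)\,d\ov{\bf m}_\rho(u)$, which is (iv).

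The main obstacle is really just the two-sided integrability of $\log J$, equivalently of $\log\|A_1\|$ and $\log\|A_1^{-1}\|$: Proposition~\ref{stochasticflow}~v) bounds $\vf$ directly in $C^1$, but the inverse is not the solution of an SDE of the same form, so the bound on $\|A_1^{-1}\|$ must be extracted from the $L^q$ moment estimates of Proposition~\ref{El-Ku}~iv) applied to the linearized tangent flow. Once these moment bounds are in place, parts (i)--(iv) are a direct application of the classical MET and Birkhoff theorems to the cocycle $A_n$ over $(\mathcal T,\tau,\mu_\rho)$.
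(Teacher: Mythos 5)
The paper proves this lemma only by citation to Oseledets; your proposal reconstructs that argument, so the approach is the same. Your setup of the cocycle $A_n=D_u(\underline\vf|_n)|_{T_u\ov W^s(u)}$ over $(\mathcal T,\tau,\mu_\rho)$, the use of the non-invertible (filtration) form of the MET, and the Birkhoff argument for (iv) are all correct and are exactly what is needed.

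One small caution on the step you yourself flag as the main obstacle: the lower integrability bound for $\log J$. Proposition~\ref{El-Ku}~iv) controls $\|D^{(l)}F_t\|_{L^q}$ for the \emph{forward} tangent map; it does not by itself bound the smallest singular value of $A_1$, and your parenthetical claim that it ``bounds the smallest singular value of $A_1$ uniformly'' overstates it. The correct route is the standard one for stochastic flows: $A_t^{-1}=(DF_t)^{-1}$ itself solves a linear Stratonovich SDE of the same class (the transposed/inverse cocycle equation), so the same Burkholder--Gronwall machinery that gives \ref{El-Ku}~iv) yields $\E\big[\|A_1^{-1}\|^q\big]<\infty$ for all $q$; equivalently one may appeal to Kunita's moment estimates for the inverse flow, which underlie the $D^\infty$ topology the paper uses (closeness of both $\vf$ and $\vf^{-1}$). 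With that replacement, $\log^+\|A_1\|$ and $\log^+\|A_1^{-1}\|$ are both integrable, hence $\log J(\vf_0,u)\in L^1(\mu_\rho)$, and the rest of your argument goes through verbatim.
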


\begin{lem}\label{lem-LQ}(cf. \cite[Chapter III, Section 1]{LQ})  For each $\rho<0$, given a small enough positive $\tau$-invariant function $\epsilon$ on $\T$, there is a positive function $\kappa$ on ${\bf \Omega}\times \{\N\cup \{0\}\}$  such that for $n\in \N\cup \{0\}$, 
\begin{equation}\label{cite-kappa}
\kappa\big((\underline\vf, u), n+1\big)\leq e^{\epsilon}\cdot \kappa\big((\underline\vf, u), n\big),
\end{equation}
a positive constant $\kappa_0$ 
and a sequence  Euclidean metrics $\|\cdot\|'_{(\underline\vf, u), n}$ on $T_{\underline\vf|_n(u)}\ov{W}^s(\underline\vf|_n(u))$ such that  for all  $n\in \N\cup \{0\}$,
\begin{itemize}
\item[i)]$\kappa_0\|\cdot\|_{\underline\vf|_n(u)}\leq \|\cdot\|'_{(\underline\vf, u), n}\leq  \kappa((\underline\vf, u), n)\|\cdot\|_{\underline\vf|_n(u)}$, where $\|\cdot\|_{\underline\vf|_n(u)}$ is the Riemannian norm on $T_{\underline\vf|_n(u)}\ov{W}^s(\underline\vf|_n(u))$;
\item[ii)]${\bf F}_{(\underline\vf, u), n}(e):=\exp_{\underline\vf|_{n+1}(u)}^{-1}\circ \vf_n\circ \exp_{\underline\vf|_n(u)}(e)$ is defined for $e$ with $\|e\|'_{(\underline\vf, u), n}\leq \epsilon(\underline\vf, u)$;
\item[iii)]${\bf F}_{(\underline\vf, u), n}$ is $C^2$ and  $\|D^{(2)}{\bf F}_{(\underline\vf, u), n}\|'_{(\underline\vf, u), n, n+1}<\kappa \big((\underline\vf, u), n\big)$,  where by  ${\|\cdot\|}'_{(\underline\vf, u), n, n+1}$ we mean the norm of the tangent map calculated using $\|\cdot\|'_{(\underline\vf, u), n}$ and $\|\cdot\|'_{(\underline\vf, u), n+1}$;
\item[iv)] the map $D{\bf F}_{(\underline\vf, u), n} $ satisfies 
\[ \| D_e{\bf F}_{(\underline\vf, u), n} - D_0{\bf F}_{(\underline\vf, u), n}\|'_{(\underline \vf, u), n, n+1} \leq \epsilon (\underline\vf, u)  \|e\|'_{(\underline\vf, u), n};\]
\item[v)] the map $D_0{\bf F}_{(\underline\vf, u), n} $ satisfies
 $$ e^{\chi_i(\underline\vf, u)-\epsilon(\underline\vf, u)} \| e \|'_{(\underline\vf, u), n} \leq \| D_0{\bf F}_{(\underline\vf, u), n} (e)\|'_{(\underline\vf, u), n+1} \leq e^{\chi_i(\underline\vf, u)+\epsilon(\underline\vf, u)} \|e\|'_{(\underline\vf, u), n},$$ for all  $e \in E_i(\tau^n(\underline\vf, u))$.  Moreover, for $i, 1\leq i\leq r(\underline\vf, u)$, the spaces $E_j(\underline\vf, u)$, $j\geq i$, generate $V_i(\underline\vf, u)$.
\end{itemize}
\end{lem}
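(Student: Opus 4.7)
The plan is to adapt the standard Pesin--Oseledec chart construction to the random skew product $(\mathcal{T},\tau,\mu_\rho)$, in the style of Liu--Qian for random dynamical systems. The starting data is the Oseledec filtration supplied by Lemma~\ref{lem-Os}, which I would first refine to a measurable direct-sum decomposition $T_{\underline\vf|_n(u)}\ov W^s = \bigoplus_{i=1}^{r} E_i(\tau^n(\underline\vf,u))$ along which $D_u(\underline\vf|_k)(e)$ grows at rate $e^{k\chi_i(\underline\vf,u)}$. Three ingredients then do the work: (a) construct Lyapunov inner products $\|\cdot\|'_{(\underline\vf,u),n}$ as exponentially weighted orbit sums on each $E_i$; (b) verify that the associated scaling kernel $\kappa$ is tempered in the sense of (\ref{cite-kappa}); (c) identify $\mathbf F_{(\underline\vf,u),n}$ with the exponential-chart representation of the $C^\infty$ leaf diffeomorphism $\vf_n|_{\ov W^s}$.

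For (a), fix a small $\epsilon$ and for $e\in E_i(\underline\vf,u)$ set
\[
(\|e\|'_{(\underline\vf,u),0})^2 \; := \; \sum_{k\geq 0} e^{-2k(\chi_i+\epsilon)} \|D_u(\underline\vf|_k)(e)\|^2,
\]
with a backward-in-$k$ series weighted by $e^{2k(\chi_i-\epsilon)}$ for the contracting Oseledec subspaces; transport to step $n$ by evaluating the same formula at $\tau^n(\underline\vf,u)$. The Oseledec growth estimates (Lemma~\ref{lem-Os} ii)) give almost-sure convergence. The lower bound in (i) is then the $k=0$ term (choose $\kappa_0<1$), the upper bound is the partial-sum envelope defining $\kappa$, and item (v) follows by shifting the summation index by one, which multiplies the whole series by a factor between $e^{2(\chi_i-\epsilon)}$ and $e^{2(\chi_i+\epsilon)}$.

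The central technical point is (b). Using the cocycle identity, (\ref{cite-kappa}) reduces to the statement that the derivative-cocycle increments along $\mu_\rho$-generic orbits grow subexponentially. Defining
\[ H(\underline\vf,u):=\log^+\|\vf_0\|_{C^2}+\log^+\|\vf_0^{-1}\|_{C^2} \]
(on a fundamental domain for $G$), the $L^q$ bound (\ref{r-derivatives}) from Proposition~\ref{con-geo-flow} ii) gives $H\in L^1(\mu_\rho)$. Birkhoff's ergodic theorem applied to the $\tau$-invariant $\mu_\rho$ then forces $H(\tau^n(\underline\vf,u))/n\to 0$ almost surely, which is enough to make the partial-sum envelope for $\kappa$ tempered with any prescribed $\epsilon>0$. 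This is the main obstacle: in the random setting the derivative cocycle is not uniformly bounded (unlike the deterministic Pesin case where continuity of the map is used), so temperedness must be extracted purely from integrability, and the \emph{same} tempered $\epsilon$ must simultaneously be small enough to control items (iii)--(v).

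Items (ii)--(iv) are then standard coordinate estimates. Because $\vf_n$ is $C^\infty$ along $\ov W^s$ with derivative norms dominated on the orbit by $H\circ\tau^n$, its exponential-chart representation $\mathbf F_{(\underline\vf,u),n}$ is $C^2$ on a $\|\cdot\|'_n$-ball of tempered radius $\epsilon(\underline\vf,u)$, giving (ii); a Taylor estimate combined with the quasi-isometry (i) yields the $C^2$ bound (iii); and the mean-value inequality applied to $D\mathbf F$ gives the Lipschitz estimate (iv), where the factor $\epsilon(\underline\vf,u)$ absorbs both the leaf $C^2$ distortion and the change between $\|\cdot\|$ and $\|\cdot\|'$.
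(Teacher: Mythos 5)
Your sketch is the standard two--sided Pesin--Lyapunov chart construction, but it does not engage with the fact that the present random system is genuinely \emph{one--sided}: $\mathcal S=(D^\infty(\OO^sSM))^{\N\cup\{0\}}$ and $\tau$ is non--invertible, so there is no backward orbit. The paper flags this immediately after the lemma (``we are in the non-invertible case, $\kappa$ is not invariant, i.e.\ $\kappa((\underline\vf,u),n)\neq\kappa(\tau^n(\underline\vf,u),0)$ in general''), and the rest of Section~3 is built around that feature. Two concrete consequences for your argument. First, the ``backward-in-$k$ series weighted by $e^{2k(\chi_i-\epsilon)}$'' simply does not exist here. Second, with the pure forward sum that remains, the index-shift computation you invoke for item~(v) gives
\[
\bigl(\|D_0\mathbf F_{(\underline\vf,u),n}(e)\|'_{(\underline\vf,u),n+1}\bigr)^2
= e^{2(\chi_i+\epsilon)}\Bigl[\bigl(\|e\|'_{(\underline\vf,u),n}\bigr)^2-\|e\|^2_{\underline\vf|_n(u)}\Bigr],
\]
which yields the \emph{upper} bound but not the lower bound $e^{\chi_i-\epsilon}\|e\|'_n\leq\|D_0\mathbf F(e)\|'_{n+1}$; the dropped $k=0$ term cannot be controlled by a multiplicative factor. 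Moreover, the forward-sum norm at step $n$ depends only on the forward orbit of $\tau^n(\underline\vf,u)$, so your $\kappa$ \emph{would} be invariant, in direct contradiction with what the lemma is designed to produce (and what the paper says about it).

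The construction that the paper actually invokes, following Liu--Qian, uses a \emph{truncated} two--sided sum: at stage $n$ one also sums the backward terms $k=-n,\dots,-1$ (the backward orbit is defined only down to time $0$). This restores the lower bound in~(v) and is precisely what makes $\kappa$ depend on the pair $((\underline\vf,u),n)$ rather than on the orbit point $\tau^n(\underline\vf,u)$ alone --- the feature later exploited via the set $\rm A$ and the modified Bowen balls. Your Birkhoff/integrability argument for temperedness is the right ingredient, but it has to be applied to this truncated norm; and you assume $\log^+\|\vf_0^{-1}|_{\ov W^s}\|_{C^2}\in L^1(\mu_\rho)$ without justification --- the bound~(\ref{r-derivatives}) controls the forward $C^r$ norms, and some argument (e.g.\ via Proposition~\ref{stochasticflow} and the $C^{0,\infty}$ topology, which controls inverses) is needed for the inverse. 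As written the proposal would not produce the $\kappa$ with the stated properties, so this is a real gap rather than an alternative route.
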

(Since elements of $D^\infty (\OO^s SM) $ preserve the leaves of $\ov\W^s$,  Lemma \ref{lem-LQ} can be obtained as in \cite[Chapter III, Section 1]{LQ}) using the natural auxiliary charts $E_j(\underline\vf, u)'s$ and Lemma \ref{lem-Os}.)

For Proposition \ref{Thieullen},  we shall follow   Ma\~{n}\'{e} (\cite{Mane}) to give a local version  of (\ref{equ-Thieullen}) for  Bowen balls defined using the norms in Lemma \ref{lem-LQ} and then compare it with local entropy for special partitions.  Note that we are in the non-invertible case,  $\kappa$ is not invariant (i.e.,  for $(\underline\vf, u)\in {\bf \Omega}$,  $\kappa ((\underline\vf, u), n)$ does not equal to $\kappa (\tau^n(\underline\vf, u), 0)$ in general).  To overcome this  deficiency, we will pick up a set ${\rm A}$ with measure  close to 1   and define   modified  Bowen balls  associated to ${\rm A}$.

Let $\epsilon_0>0$,  $-\infty<\rho<0$ be fixed. Choose $\epsilon<\epsilon_0$ as in Lemma \ref{lem-LQ}.  For any $a\in (0, 1)$, we choose   a measurable set ${\rm A}\subset {\bf \Omega}$ with $\mu _\rho ({\rm A}) > 1-a $ as follows. 
By the ergodicity of $\nu _\rho^{\otimes \N\cup \{0\}}$ with respect to $\sigma$ and the integrability property (\ref{r-derivatives}),  for $\nu _\rho^{\otimes \N\cup \{0\}}$ almost every $\underline{\vf}$, 
\[
\lim\limits_{n\to +\infty}\frac{1}{n}\sum_{i=1}^{n}\big(\log \max _u \| \vf_i|_{\wh W^s (u)} \|_{C^2}\big)^+=\int \big(\log \max _u \| \vf|_{\wh W^s (u)} \|_{C^2}\big)^+\ d\nu_{\rho}(\vf)=:L<+\infty. 
\]
For any $b>0$, let 
\[
{\rm A}(b):=\left\{\underline{\vf}\in \S:\ \prod_{i=0}^{n-1}\max _u \| \vf_i|_{\wh W^s (u)}\|_{C^2}\leq be^{2Ln}\ \mbox{for all }\ n\in \Bbb N\cup\{0\}\right\}. 
\]
Then there exists $b>0$ large such that 
\begin{equation}\label{choice-b}
\nu _\rho^{\otimes \N\cup \{0\}}({\rm A}(b))>1-\frac{1}{2}a. 
\end{equation}
Let $b$ be as in (\ref{choice-b}).  For $l>0$, set 
\begin{equation}\label{set-rmA-b-l}
{\rm A}(b, l):=\big\{(\underline{\vf}, u)\in {\bf \Omega}:\ \underline{\vf}\in {\rm A}(b), \ \eta\kappa_0\big(\kappa\big((\underline\vf, u), 0\big)\big)^{-2}>l\big\}. 
\end{equation}
Choosing  $\eta>0, \epsilon(\underline{\vf}, u)$ with   $\epsilon(\underline{\vf}, u)<\epsilon_0$
 and $l$  to be small enough,  we can obtain a measurable set 
\begin{equation}\label{theset-A}
 {\rm A}:={\rm A}(b, l) \cap \{ (\underline \vf, u): \eta <\epsilon(\underline{\vf}, u) \}
 \end{equation} 
with  $\mu_{\rho}$ measure greater than $1-a$. 

Let  ${\rm A}$ be as in (\ref{theset-A}).  For  $\mu_{\rho}$ almost all $(\underline{\vf}, u)\in {\rm A}$, it  will return to ${\rm A}$ under the iterations of the  map $\tau$ for infinitely many times. Hence, for any such  $(\underline{\vf}, u)$ and $k\geq 1$, 
 \begin{equation}\label{new-norm-1} \| .\|''_{(\underline \vf, u ), k} := \| .\|'_{\tau^{{\mathtt  N}_k^{\rm A}}(\underline \vf, u), k-{\mathtt N}_k^{\rm A}}
\end{equation}
is well-defined, 
where ${\mathtt N}_k^{\rm A}$ is the last non-negative time before  or  equal to $k$ with $\tau^{{\mathtt N}_k^{\rm A}}(\underline \vf, u) \in {\rm A}$.   For  $\eta >0$, $(\underline{\vf}, u)\in {\rm A}$   such that $\eta<\epsilon(\underline\vf, u),$  and $ n \in \N$, let us define the \emph{modified random $\ov \W^s$-Bowen  ball (with respect to ${\rm A}$)}  by 
\begin{equation}\label{mod-Bowen-ball-1}
{\begin{split}
& {\bf B}^s_{\rm A}(\underline{\vf}, u, \eta, n) :=\left\{e\in T_u\ov{W}^s(u):\  \|e\|''_{(\underline\vf, u), 0}<\eta \kappa^{-1}\big((\underline\vf, u), 0\big) , \ \mbox{and for}\  k, \  1\leq k\leq n, \right.\\
&\ \ \ \ \ \ \ \ \ \ \ \  \ \ \ \ \ \ \ \ \ \ \ \ \ \ \ \ \ \ \ \ \ \ \ \ \ \ \ \ \ \ \ \   \left.\big\|{\bf F}_{({\un \vf },u)}\big|_k(e)\big\|''_{(\underline\vf, u), k}<\eta \kappa^{-1}\big(\tau^{{\mathtt N}_k^{\rm A}}(\underline\vf, u), k-{\mathtt N}^{\rm A}_k\big) \right\},
\end{split}}
\end{equation} where ${\bf F}_{({\un \vf },u)}\big|_k:={\bf F}_{(\underline\vf, u), k-1}\circ  \cdots\circ {\bf F}_{(\underline\vf, u), 0}$. 

The following can be considered as a  first step coarse local  version of Proposition \ref{Thieullen}: 

\begin{lem}\label{follow-Mane-1} Let $-\infty<\rho<0,$ $a\in (0, 1)$,   $\epsilon_0>0$,   and $\eta >0$ be fixed.  Choose $\epsilon<\epsilon_0$ as in Lemma \ref{lem-LQ}. Let  ${\rm A}\subset {\bf \Omega}$  be as in (\ref{theset-A}).  Then, there is a positive geometric constant   ${\bf C}_0$   such that, setting   ${\bf c}_0 :=\frac{1}{2}  {\ov m}  \log \frac{l\kappa _0 }{\eta} , $ 
 for $\mu_{\rho}$ almost all $(\underline\vf, u)$ and all $n\in \Bbb N$,   
\begin{equation}\label{Fo-Mane}
-\frac{1}{n} \log \ov{\bf m}_{\rho,u}^s \left(\exp_u({\bf B}^s_{\rm A}(\underline \vf,  u,  \eta, n ))\right)\geq -\frac{1}{n}\log J(\un \vf |_n, u)-3\ov{m}\epsilon_0 -\frac{1}{n} {\bf C}_0+{\bf c}_0.
\end{equation}
\end{lem}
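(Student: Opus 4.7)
The plan is to follow Ma\~n\'e's Pesin-chart volume estimate (cf.\ \cite{Mane}), adapted to the random non-invertible setting. The three main steps are: (1) linearize each chart map ${\bf F}_{(\underline\vf,u),k}$ in the Pesin frame supplied by Lemma \ref{lem-LQ}; (2) bound the Riemannian volume of the modified Bowen ball ${\bf B}^s_{\rm A}(\underline\vf,u,\eta,n)$ in terms of $J(\underline\vf|_n,u)$; (3) convert this volume bound to a measure bound via the smooth Lebesgue density of $\ov{\bf m}_{\rho,u}^s$.

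For (1), Lemma \ref{lem-LQ} iv) says ${\bf F}_{(\underline\vf,u),k}$ differs from $D_0{\bf F}_{(\underline\vf,u),k}$ by at most $\epsilon(\underline\vf,u)\leq\epsilon_0$ times the Lyapunov norm of its argument. On the Bowen ball, where the $\|\cdot\|''$-norm of every iterate stays of order $\eta$ by (\ref{mod-Bowen-ball-1}), composing over $n$ steps shows that ${\bf F}_{(\underline\vf,u)}|_n$ and the linear iterate $D_0({\bf F}_{(\underline\vf,u)}|_n)$ agree up to a multiplicative Jacobian error $e^{O(\ov m\epsilon_0 n)}$. For (2), by (\ref{mod-Bowen-ball-1}) the image ${\bf F}_{(\underline\vf,u)}|_n({\bf B}^s_{\rm A})$ lies in the $\|\cdot\|''_{(\underline\vf,u),n}$-ball of radius $\eta/\kappa(\tau^{{\mathtt N}^{\rm A}_n}(\underline\vf,u),n-{\mathtt N}^{\rm A}_n)$; both $(\underline\vf,u)\in{\rm A}$ and $\tau^{{\mathtt N}^{\rm A}_n}(\underline\vf,u)\in{\rm A}$, so by (\ref{set-rmA-b-l}) the $\kappa$-factors at these two points are bounded by $(\eta\kappa_0/l)^{1/2}$, and Lemma \ref{lem-LQ} i) controls the Lyapunov-versus-Riemannian norm comparison at both endpoints. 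Pulling back by $D_0({\bf F}_{(\underline\vf,u)}|_n)$, whose determinant equals $J(\underline\vf|_n,u)$ up to this endpoint norm-comparison, gives
\[
\mathrm{Vol}_{\mathrm{Riem}}\bigl({\bf B}^s_{\rm A}(\underline\vf,u,\eta,n)\bigr) \;\leq\; \frac{C_{\ov m}\, e^{3\ov m\epsilon_0 n}}{J(\underline\vf|_n,u)}\left(\frac{\eta}{(l\kappa_0)^{1/2}}\right)^{\ov m},
\]
where the half-power is the geometric mean of the two endpoint norm-comparison factors and is the origin of the $\frac{1}{2}\ov m\log(l\kappa_0/\eta)$ defining ${\bf c}_0$. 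For (3), by Proposition \ref{harmonic-mea} iii) and Corollary \ref{small-measure}, $\ov{\bf m}_{\rho,u}^s$ has a continuous positive Lebesgue density on $\ov W^s_{\mathrm{loc}}(u)$, so $\ov{\bf m}_{\rho,u}^s(\exp_u E)\leq C\,\mathrm{Vol}(E)$ on small sets. Combining, taking $-\frac{1}{n}\log$, and absorbing all $n$-independent constants into ${\bf C}_0$ yields the asserted inequality.

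The main obstacle is that $\kappa$ is not $\tau$-invariant but only obeys the per-step drift (\ref{cite-kappa}), so without care the accumulated per-step linearization and norm-comparison errors could grow beyond $e^{O(\epsilon_0 n)}$. The key device preventing this is the reset norm $\|\cdot\|''$ of (\ref{new-norm-1}), together with the construction of ${\rm A}$ in (\ref{choice-b})--(\ref{theset-A}): the integrability of $\max_u\|\vf|_{\wh W^s(u)}\|_{C^2}$ (finiteness of $L$) forces $\prod_k\|\vf_k\|_{C^2}\leq b e^{2Ln}$ along the orbit on the large set ${\rm A}(b)$, while $\kappa$ is uniformly bounded at every return time to ${\rm A}$ by (\ref{set-rmA-b-l}). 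The most delicate step is verifying that the per-step error really stays within $\epsilon_0$ per direction even across long excursions from ${\rm A}$, so that the total accumulated error does not exceed the $e^{3\ov m\epsilon_0 n}$ factor absorbed into the $-3\ov m\epsilon_0$ term.
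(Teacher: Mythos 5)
There is a genuine gap. Your argument treats the Lyapunov-versus-Riemannian norm comparison as happening only at the two endpoints (times $0$ and $n$), and on this basis you claim the $(\eta/(l\kappa_0)^{1/2})^{\ov m}$ factor is an $n$-independent constant that is ``the origin of ${\bf c}_0$.'' But in fact ${\bf c}_0$ is \emph{not} an $n$-independent constant buried in the geometric constant ${\bf C}_0/n$: it appears on its own, undivided by $n$, in (\ref{Fo-Mane}), so it survives the $n\to\infty$ limit. The reason is that the per-step determinant estimates from Lemma \ref{lem-LQ} are taken in the \emph{reset} Lyapunov norms $\|\cdot\|''_{(\underline\vf,u),k} = \|\cdot\|'_{\tau^{{\mathtt N}_k^{\rm A}}(\underline\vf,u),\,k-{\mathtt N}_k^{\rm A}}$, and these reset at every return time to ${\rm A}$. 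When you pass from the ${\rm Det}'$-determinant at step $k$ (computed relative to the anchor $\tau^{{\mathtt N}_k^{\rm A}}(\underline\vf,u)$) to the one at step $k+1$ (anchored at $\tau^{{\mathtt N}_{k+1}^{\rm A}}(\underline\vf,u)$ when $k+1$ is a return time), you pay a norm-comparison factor of order $(\eta/(l\kappa_0))^{\ov m/2}$ -- this is exactly (\ref{ch-ch-norm}) in the paper's proof. The number of such returns $\#\{{\mathtt N}_k^{\rm A}: k\leq n\}$ can be as large as $n$, so these factors accumulate to $e^{-{\bf c}_0\cdot\#}$, and after dividing by $n$ you can only guarantee a lower bound of $+{\bf c}_0$ (since ${\bf c}_0\leq 0$ and $\#\leq n$). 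Your proposal simply drops these intermediate jumps.

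This is not a cosmetic issue: if your endpoint-only accounting were valid, the resulting lower bound would have no ${\bf c}_0$ term asymptotically, i.e.\ you would have proven a strictly stronger inequality. But the paper's subsequent proof of Proposition \ref{Thieullen} works specifically to kill ${\bf c}_0$ by passing to the $\mathsf M$-th power $\tau^{\mathsf M}$ (with the same $\eta, l$, so the same ${\bf c}_0$) and choosing $\mathsf M$ so large that $|{\bf c}_0| < \mathsf M\epsilon_0$. That whole device would be unnecessary under your reading. The fix is to chop the orbit segment $[0,n]$ into blocks between consecutive returns to ${\rm A}$, apply the per-step Lemma \ref{lem-LQ} estimates within each block in the block's own anchored Lyapunov norms, and track the $(\eta/(l\kappa_0))^{\ov m/2}$ discrepancy at each block boundary; that is exactly what the paper's (\ref{B-A-s-set})--(\ref{ch-ch-norm}) computation does.

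Two smaller issues worth flagging: (a) your proposal appeals directly to the Riemannian volume of $\exp_u({\bf B}^s_{\rm A})$, but the paper passes through a change of variables pushing the set forward by $\underline\vf|_n$ and then uses the normalization by $\ov\lambda^s_u(\mathcal{R}(\underline\vf|_n(u)))$; this step is needed because $\ov{\bf m}^s_{\rho,u}$ is a \emph{conditional} measure on a partition element, not a globally normalized Lebesgue measure, so the comparison constant $C'$ depends on the choice of $\mathcal R$, and this also is a source of the term that ends up in ${\bf C}_0/n$; (b) the closed-form volume bound you display has the factor $\eta^{\ov m}/(l\kappa_0)^{\ov m/2}$, which is not the same as $e^{-{\bf c}_0}=(\eta/(l\kappa_0))^{\ov m/2}$, a sign that the endpoint-only bookkeeping does not actually reproduce the claimed constant.
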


    \begin{proof} The set ${\bf B}^s_{\rm A}(\underline{\vf}, u, \eta, n)$ is  empty if   $(\underline\vf, u) \not \in {\rm A}.$   Otherwise, by definition and Lemma \ref{lem-LQ} iv), ${\bf B}^s_{\rm A}(\underline \vf,  u,  \eta, n )$ is contained in the set of vectors $ e\in T_u\ov{W}^s(u)$ such that  
    \begin{equation}\label{B-A-s-set}
\begin{split}& \ \  \|e\|'_{(\un\vf, u), 0}<\eta \kappa^{-1}\big((\un\vf, u), 0\big),\ \big\| {\bf F}_{({\un \vf },u)}\big|_n(e) \big\|''_{(\un\vf, u), n}<\eta \kappa^{-1}\big(\tau^{{\mathtt N}_n^{\rm A}}(\underline\vf, u), n-{\mathtt N}^{\rm A}_n\big)\; {\textrm {  and }} \\
&\ \ \ \ \ \ \ \ \ \ \ \  \ \ \ \ \ \ \  \big|{\textrm {Det}}' D_e {\bf F}_{({\un \vf },u), k}\big| \geq \big|{\textrm {Det}}' D_0 {\bf F}_{({\un \vf },u), k}\big| (1-\epsilon(\un \vf ,u))^{\ov{m}} \ \mbox{for}\ 1\leq k\leq n,
\end{split}
\end{equation}
where ${\textrm {Det}}'$ is the determinant of a linear mapping in the metrics $\| \cdot \|'_{\tau^{{\mathtt N}_k^{\rm A}}(\underline\vf, u), k-{\mathtt N}^{\rm A}_k}$ and $\| \cdot \|'_{\tau^{{\mathtt N}_k^{\rm A}}(\underline\vf, u), k+1-{\mathtt N}^{\rm A}_k}$.  By construction, $\tau^{{\mathtt N}_k^{\rm A}}(\underline\vf, u) \in {\rm A}$ and   $ \kappa\big(\tau^{{\mathtt N}_k^{\rm A}}(\underline\vf, u), 0\big)    \leq \big(\frac {\eta}{l} \kappa _0 \big)^{1/2} .$  Assume $\tau^{k+1}(\underline \vf, u) \in {\rm A}$, i.e., ${\mathtt N}_{k+1}^{\rm A}=k+1$. Then 
\begin{equation}\label{ch-ch-norm}
\begin{split}
\| \cdot \|'_{\tau^{{\mathtt N}_k^{\rm A}}(\underline\vf, u), k+1-{\mathtt N}^{\rm A}_k}&\leq   e^{({\mathtt N}_{k+1}^{\rm A} - {\mathtt N}_{k}^{\rm A}) \epsilon }  \kappa\big(\tau^{{\mathtt N}_k^{\rm A}}(\underline\vf, u), 0\big)     \kappa_0^{-1} \|\cdot\|'_{\tau^{k+1}(\underline\vf, u), 0}\\
&\leq    e^{({\mathtt N}_{k+1}^{\rm A} - {\mathtt N}_{k}^{\rm A}) \epsilon }  \big(\frac {\eta}{l\kappa_0} \big)^{\frac{1}{2}}  \|\cdot\|'_{\tau^{k+1}(\underline\vf, u), 0}.
\end{split}
\end{equation}
By chopping    $\un \vf |_n(u)$  into pieces in between returning times and using (\ref{B-A-s-set}), (\ref{ch-ch-norm}),  we see that   there exists a geometric constant $C$ such that the set $\exp_u {\bf B}^s_{\rm A}(\underline \vf,  u,  \eta, n )$ is contained in the set $\underline{{\bf B}}^s_{\rm A}(\underline \vf,  u,  \eta, n )$ of points  $ w\in \ov{W}^s(u)$ such that 
\begin{align*}& d_{\ov W^s(u)} (w,u) <C \kappa _0^{-1} \eta \kappa^{-1}(\un\vf, u),\  d_{\ov W^s(\un\vf |_n u)} (\un\vf |_n w,\un\vf |_n u) <C \kappa_0^{-1} \eta \kappa^{-1}\big(\tau^{{\mathtt N}_n^{\rm A}}(\underline\vf, u), n-{\mathtt N}^{\rm A}_n\big) \\
&\ \ \ {\textrm {and }}  \ J(\un \vf |_n, w)\geq (C^{-1} \kappa _0)^{\ov{m}}(1-\epsilon(\un \vf ,u))^{n\ov{m}}J(\un \vf |_n, u) e^{-n \ov m\epsilon} \big(\frac {l \kappa _0}{\eta} \big)^{\frac{1}{2} \ov m \# \{{\mathtt N}_k^{\rm A}(\underline\vf, u):\  k\leq n\}}.
\end{align*}
It follows that, denoting $\overline{\lambda}_{u}^s$ the Lebesgue measure on $\ov{W}^s(u)$, 
\[
\ov{\bf m}_{\rho, u}^s \left(\exp_u({\bf B}_{\rm A}^s(\underline \vf,  u,  \eta, n ))\right)\leq \frac{e^{C'}}{\overline{\lambda}_{u}^s(\mathcal{R}(\un \vf |_n (u)))}\int _{ \un \vf |_n \left( \underline{{\bf B}}_{\rm A}^s(\underline \vf,  u,  \eta, n )\right)} J^{-1}(\un \vf |_n, w)\, d {\ov{\lambda}}_{\un \vf |_n u}^s(w),
\]
where $C'$ is a positive constant taking into account  the regularity of the density for a fixed $\rho$.  It follows that, with our definition of $ {\bf c}_0, $
\begin{equation*}
\begin{split} &\ov{\bf m}_{\rho, u}^s \left(\exp_u({\bf B}_{\rm A}^s(\underline \vf,  u,  \eta, n ))\right)\\
&\ \ \leq\frac{e^{C'}}{\overline{\lambda}_{u}^s(\mathcal{R}(\un \vf |_n (u)))}  J^{-1}(\un \vf |_n, u)(C \kappa _0^{-1})^{\ov{m}}(1-\epsilon(\un \vf ,u))^{-n\ov{m}}e^{n\overline{m}\epsilon} e ^{- {\bf c}_0\# \{{\mathtt N}_k^{\rm A}(\underline\vf, u):\  k\leq n\}}. \end{split}\end{equation*}
Note that  the partition $\mathcal{R}$ is such that each element  contains  a ball with radius greater than some positive constant (see Section 4), 
we obtain  some constant ${\bf C}_0>0$ such that 
\[ -\frac{1}{n} \log \ov{\bf m}_{\rho,u}^s \left(\exp_u({\bf B}_{\rm A}^s(\underline \vf,  u,  \eta, n ))\right)\geq \frac{1}{n} \log J(\un \vf |_n, u) +\ov{m}\log(1-\epsilon(\un \vf , u))-\overline{m}\epsilon- \frac{1}{n} {\bf C}_0+ {\bf c}_0.\]
The estimation in (\ref{Fo-Mane}) follows for $\epsilon_0$ small enough. \end{proof}

By  (\ref{set-rmA-b-l}) and Lemma \ref{lem-LQ} i),  we have 
\[
 {\bf c}_0 =\frac{1}{2} \ov m \log \frac {l \kappa _0}{\eta }\leq  \ov m \log \frac{\k_0}{ \min _{( \underline{\vf}, u) \in {\rm A} } \kappa \big( (\underline{\vf}, u), 0\big)}\leq 0 .\]
So the   estimation in   (\ref{Fo-Mane}) might be  too    coarse since  ${\bf c}_0$  is not a priori small compared with $\epsilon_0$.  But  ${\bf c}_0$ remains unchanged  when we consider (\ref{Fo-Mane}) for  Bowen balls for any power of $\tau$, hence  it will not enter the  lower bound  estimation of entropy in (\ref{equ-Thieullen}).

More precisely, let $\mathsf M\in \Bbb N$ be fixed. For $\underline \vf\in \mathcal S$, write 
\[ \underline \vf':=(\vf'_0, \vf'_1,\cdots, \vf'_k, \cdots), \ \mbox{where}\  \vf'_k := \underline{\vf} \circ \sigma ^{k\mathsf M}|_\mathsf M.\]
 Let 
\[\sigma'(\underline \vf'):=(\vf'_1,\cdots, \vf'_k, \cdots) \  \mbox{and}\ \tau' (\underline \vf' , {u}) \; := \; (\s' (\underline \vf), \vf' _0 {u}).\]
The transformation   $\tau'$ can be identified with $\tau^{\mathsf M}$.   We can use the same $\kappa _0 , \kappa $ for $\tau'$ as for $\tau$, but now  $\epsilon$ in (\ref{cite-kappa}) has to be changed into $\mathsf M \epsilon$.   So we have to choose $\epsilon '$ so that  $\mathsf M \epsilon' < \epsilon_0 .$ Choose $\eta' < \eta $ small enough that, if  $l' : = l \frac{\eta '}{\eta}$, the measurable set 
\begin{equation}\label{theset-A'}
 {\rm A'}:={\rm A}(b, l') \cap \{ (\underline \vf, u): \eta '<\epsilon '(\underline{\vf}, u) \}
 \end{equation} 
has  $\mu_{\rho}$ measure greater than $1-a$.

For  $\mu_{\rho}$ almost all $(\underline{\vf}, u)=(\underline{\vf}', u)\in {\rm A'}$ and $k\in \Bbb N$, let $\mathsf M^{\rm A'}_k$ denote  the last non-negative time before  or  equal to $k$ such that $(\tau')^{ \mathsf M^{\rm A'}_k}(\underline{\vf}', u)\in {\rm A'}$.  Similar to (\ref{new-norm-1}) and (\ref{mod-Bowen-ball-1}),   we define 
\[ \| .\|^{'', \mathsf M}_{(\underline \vf', u ), k} := \| .\|'_{(\tau')^{{\mathsf M}_k^{\rm A'}}(\underline \vf', u), (k-{\mathsf M}_k^{\rm A'})\mathsf M}, 
\] 
and for $\eta' >0$, $(\underline{\vf}, u)\in {\rm A'}$   such that $\eta '<\epsilon '(\underline\vf, u),$ and $ n \in \N$,  we define the \emph{modified random $\ov \W^s$-Bowen  ball  for $\tau'$ (with respect to ${\rm A'}$) } by 
\begin{align*}
{\bf B}^{s, \mathsf M}_{\rm A'}(\underline{\vf}', u, \eta, n) &:=\left\{e\in T_u\ov{W}^s(u):\  \|e\|''_{(\underline\vf', u), 0}<\eta '\kappa^{-1}\big((\underline\vf', u), 0\big) , \ \mbox{and for}\  k, \  1\leq k\leq n, \right.\\
&\ \ \ \ \ \ \ \ \ \ \ \  \ \ \ \ \ \ \ \ \ \left.\big\|{\bf F}_{({\un \vf },u)}\big|_{k\mathsf M}(e)\big\|^{'', \mathsf M}_{(\underline\vf', u), k}<\eta \kappa^{-1}\big((\tau')^{{\mathsf M}_k^{\rm A'}}(\underline\vf', u), (k-{\mathsf M}^{\rm A'}_k)\mathsf M\big) \right\}.
\end{align*}
Then following the argument in Lemma \ref{follow-Mane-1}, we obtain  (observe that, by our choice of $l'$, $ {\bf c}_0 =\frac{1}{2} \ov m \log \frac {l \kappa _0}{\eta }= \frac{1}{2} \ov m \log \frac {l'\kappa _0}{\eta'}$ has the same value as in Lemma \ref{follow-Mane-1})
\begin{lem}
 Let $-\infty<\rho<0,$ $a\in (0, 1)$,  $\mathsf M\in \Bbb N$  and $\epsilon_0$ be fixed. Let  $  \eta', \epsilon', {\rm A'}$  be as in (\ref{theset-A'}) and let $  {\bf c}_0, {\bf C}_0 $ be as in Lemma \ref{follow-Mane-1}. Then,  for $\mu_{\rho}$ almost all $(\underline\vf, u)\in {\rm A'}$ and all $n\in \Bbb N$,   
\begin{equation}\label{Fo-Mane-mathsfM}
-\frac{1}{n} \log \ov{\bf m}_{\rho,u}^s \left(\exp_u({\bf B}^{s, \mathsf M}_{\rm A'}(\underline \vf',  u,  \eta, n ))\right)\geq -\frac{1}{n}\log J(\un \vf |_{n\mathsf M}, u)-3\ov{m}\mathsf M\epsilon_0 -\frac{1}{n} {\bf C}_0+{\bf c}_0.
\end{equation} 
\end{lem}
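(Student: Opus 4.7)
The plan is to run the proof of Lemma~\ref{follow-Mane-1} verbatim, but applied to the system $(\mathcal{T}, \tau', \mu_\rho)$ with its own Pesin data. First I would apply Lemma~\ref{lem-LQ} to $\tau'$. The constant $\kappa_0$ and the function $\kappa$ can be reused, but since one step of $\tau'$ is $\mathsf M$ steps of $\tau$, the growth factor in (\ref{cite-kappa}) becomes $e^{\mathsf M \epsilon}$ per $\tau'$-step, the Pesin chart at level $k$ for $\tau'$ is a composition of $\mathsf M$ of the charts ${\bf F}_{(\underline\vf, u), \cdot}$ for $\tau$, and the Lyapunov exponent along each $E_i$ becomes $\mathsf M \chi_i$. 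This forces the choice $\mathsf M \epsilon' < \epsilon_0$, and already explains why $\epsilon_0$ appears with an $\mathsf M$ factor in (\ref{Fo-Mane-mathsfM}).

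The key numerical observation is that $\mathbf{c}_0$ is preserved by design. Indeed, by the choice $l' = l\,\eta'/\eta$ one has $l'/\eta' = l/\eta$, so $\frac{1}{2} \ov m \log (l' \kappa_0/\eta') = \frac{1}{2} \ov m \log (l \kappa_0/\eta) = \mathbf{c}_0$. The geometric constant $\mathbf{C}_0$ is also unchanged because it depends only on the distortion of $\exp_u$ at the fixed local scale $\eta\kappa_0^{-1}$ and on the lower bound for $\ov\lambda^s_u(\mathcal{R}(\cdot))$ in the subordinated partition, neither of which sees $\mathsf M$. With these data in place, I would repeat the chopping argument: break $\underline\vf|_{n\mathsf M}$ into pieces between consecutive returns to ${\rm A}'$ under $\tau'$, apply the Pesin change-of-norm estimate (\ref{ch-ch-norm}) at each $\tau'$-return (so that the number of ${\mathsf M}_k^{\rm A'}$-returns plays the role that $\#\{{\mathtt N}_k^{\rm A}\}$ played before), and bound $\exp_u({\bf B}^{s,\mathsf M}_{{\rm A}'}(\underline\vf', u, \eta, n))$ by a Bowen-type set in $\ov W^s(u)$ on which $J(\underline\vf|_{n\mathsf M}, w)$ admits the analogue of the lower bound in (\ref{B-A-s-set}), now accumulated over $n \mathsf M$ iterates of $\underline\vf$. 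A change of variables as in the previous proof, followed by $-\tfrac{1}{n}\log$, yields (\ref{Fo-Mane-mathsfM}).

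The only delicate point, and what I expect to be the main obstacle, is the book-keeping of error terms: one must check that no loss scales with $\mathsf M$ beyond the advertised factor in front of $\epsilon_0$. Concretely, the factor $3 \ov m \mathsf M \epsilon_0$ collects the three contributions appearing in the previous proof, each inflated by $\mathsf M$: the $\log(1 - \epsilon'(\underline\vf, u))^{n \ov m \mathsf M}$ term from the Jacobian comparison along the Pesin chart (Lemma~\ref{lem-LQ}(iv)), the $n \ov m \mathsf M \epsilon'$ term from the per-chart exponent bound (Lemma~\ref{lem-LQ}(v)), and the $\mathsf M \epsilon$-type contribution in (\ref{ch-ch-norm}) accumulated over the $\mathsf M$ sub-steps inside a single $\tau'$-interval. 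Since $\mathsf M \epsilon' < \epsilon_0$ by choice, each is at most $\ov m \mathsf M \epsilon_0$. Once one verifies that this synchronized choice of $\epsilon'$, $\eta'$ and $l'$ absorbs all $\mathsf M$-dependence into this single term and leaves $\mathbf{c}_0$ and $\mathbf{C}_0$ untouched, the inequality (\ref{Fo-Mane-mathsfM}) follows line by line from (\ref{Fo-Mane}).
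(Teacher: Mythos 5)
Your approach matches the paper's: the authors prove this lemma in a single sentence by pointing back to the proof of Lemma~\ref{follow-Mane-1}, applied to $\tau' = \tau^{\mathsf M}$, and observing that the synchronized choice $l' = l\eta'/\eta$ makes $\frac{1}{2}\ov m\log(l'\kappa_0/\eta') = {\bf c}_0$ exactly as before. Your elaboration — reusing $\kappa_0, \kappa$, replacing $\epsilon$ by $\mathsf M\epsilon'$ in (\ref{cite-kappa}), noting ${\bf C}_0$ is $\mathsf M$-independent, and re-running the chopping argument — is precisely what "following the argument in Lemma~\ref{follow-Mane-1}" means, so the proposal is correct and takes the same route.
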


Following  Ma\~{n}\'{e} (\cite{Mane}) (see also \cite{Th}), we can proceed  to  find partitions which have local entropy lower bound as in (\ref{Fo-Mane-mathsfM}) in our non-invertible random setting. 

\begin{lem}\label{follow-Mane-2}Let $\epsilon_0>0$,  $-\infty<\rho<0$,   $\mathsf M\in \Bbb N$ be fixed.  Let  $  \eta', \epsilon', {\rm A'}$  be as in (\ref{theset-A'}) and let $  {\bf c}_0, {\bf C}_0 $ be as in Lemma \ref{follow-Mane-1}.  There exists  a countable  partition $\mathcal{Q}$ of $\T$ with $-\int \log (\ov{\bf m}_{\rho}(\mathcal{Q}))\ d\mu_{\rho}<+\infty$  such that for $\mu_{\rho}$ almost all $(\underline \vf, u) \in {\rm A'}$, we have  $0<\eta '< \epsilon '(\underline{\vf}, u)<\epsilon_0 /\mathsf M$  and 
\begin{equation}\label{Q-partition}
\mathcal{Q}_{\mathsf M, -n}(\underline\vf, u)\subset \exp_u\big({\bf B}^{s, \mathsf{M}}_{\rm A'}(\underline \vf',  u,  \eta', n )\big),
\end{equation}
where $\mathcal{Q}_{\mathsf M, -n}:=\mathcal{Q}\bigvee (\tau^{\mathsf M})^{-1}\mathcal{Q}\bigvee\cdots\bigvee(\tau^{\mathsf M})^{-(n-1)}\mathcal{Q}$. Consequently, for $\mu_{\rho}$ almost all $(\underline \vf, u) \in {\rm A'}$, 
\begin{equation}\label{loc-ent-parti}
\liminf_{n\to +\infty}-\frac{1}{n}\log \ov{\bf m}_{\rho,u}^s \left(\mathcal{Q}_{\mathsf M, -n}(\underline\vf, u)\right)\geq -\frac{1}{n}\log J(\un \vf |_{n\mathsf M}, u)-3\ov{m}\mathsf M\epsilon_0 -\frac{1}{n} {\bf C}_0+{\bf c}_0.
\end{equation}
\end{lem}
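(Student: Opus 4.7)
The plan is to imitate Ma\~{n}\'{e}'s classical construction of a generating partition inside a Pesin chart, but adapted to the random, non-invertible setting of $\tau^{\mathsf M}$ and to the fact that the relevant chart norm switches to $\|\cdot\|^{'',\mathsf M}_{(\underline\vf',u),k}$ at each return to ${\rm A'}$. Note that (\ref{loc-ent-parti}) is an immediate consequence of (\ref{Q-partition}) together with (\ref{Fo-Mane-mathsfM}), so the entire work is to produce a countable partition $\mathcal{Q}$ of $\T$ with $-\int \log \ov{\bf m}_\rho(\mathcal{Q})\, d\mu_\rho < +\infty$ for which the inclusion (\ref{Q-partition}) holds.

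First, I would stratify $\T$ measurably according to the chart data: let $\mathcal{Q}^0$ be a countable partition such that on each atom the quantities $\log\kappa((\underline\vf,u),0)$ and $\log \epsilon'(\underline\vf,u)^{-1}$ lie in an interval of length one, and such that any atom meeting ${\rm A'}$ satisfies $\eta'<\epsilon'<\epsilon_0/\mathsf M$. Within each atom $A^0_j$ of $\mathcal{Q}^0$, I would partition along the local leaves $\ov W^s(u)$ by pulling back via $\exp_u$ a tiling by cubes of side length comparable to $\eta'\kappa^{-1}((\underline\vf,u),0)/C$ in the Euclidean metric $\|\cdot\|'_{(\underline\vf,u),0}$, where $C$ is a geometric constant large enough to absorb the factor $e^{({\mathsf M}_{k+1}^{\rm A'}-{\mathsf M}_k^{\rm A'})\mathsf M\epsilon}(\eta'/(l'\kappa_0))^{1/2}$ appearing in (\ref{ch-ch-norm}) when comparing $\|\cdot\|^{'',\mathsf M}_{(\underline\vf',u),k+1}$ with $\|\cdot\|'_{\tau^{(k+1)\mathsf M}(\underline\vf,u),0}$. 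The resulting refinement is $\mathcal{Q}$.

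To establish (\ref{Q-partition}), I would take any two points in the same atom of $\mathcal{Q}_{\mathsf M,-n}(\underline\vf,u)$ and track their separation under ${\bf F}_{(\un \vf,u)}|_{k\mathsf M}$: at each step $k$ they belong to a common atom of $\mathcal{Q}$, so they lie within a single cube at $\tau^{k\mathsf M}(\underline\vf,u)$, and the choice of side length together with (\ref{ch-ch-norm}) and Lemma \ref{lem-LQ} iv) ensures their separation measured in $\|\cdot\|^{'',\mathsf M}_{(\underline\vf',u),k}$ is smaller than $\eta'\kappa^{-1}((\tau')^{{\mathsf M}_k^{\rm A'}}(\underline\vf',u),(k-{\mathsf M}_k^{\rm A'})\mathsf M)$, which is precisely the radius defining the modified Bowen ball. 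Finite entropy of $\mathcal{Q}$ would come from the fact that the number of cubes needed per atom of $\mathcal{Q}^0$ grows only polynomially in $\kappa$, combined with integrability of $\log\kappa$ on $\T$ against $\mu_\rho$; the latter follows from the $L^q$ bounds (\ref{r-derivatives}) through the construction of $\kappa$ in Lemma \ref{lem-LQ}.

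The main obstacle will be making the cube side length simultaneously small enough for the $n$-fold refinement to sit inside the Bowen ball, whose radius is dictated by the chart at the last return time and not by the chart at time $k\mathsf M$, and large enough to keep the partition entropy finite. The dyadic stratification by level sets of $\kappa$ and $\epsilon'$ reconciles the two: on each stratum a uniform cube size works, and the $\mu_\rho$-measure of strata where $\kappa$ is large decays fast enough in $\log\kappa$ to offset the polynomial growth of the cube count. The non-invertibility of $\tau$ is not an obstruction because only pullbacks $\tau^{-k\mathsf M}\mathcal{Q}$ appear in $\mathcal{Q}_{\mathsf M,-n}$, and the random $C^{0,\infty}$ regularity supplied by Proposition \ref{stochasticflow} is more than enough to make the cube-pullback argument measurable and pointwise valid off a $\mu_\rho$-null set.
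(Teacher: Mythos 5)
The overall Ma\~{n}\'{e}-style strategy is the right one, and you correctly reduced (\ref{loc-ent-parti}) to building a countable finite-entropy $\mathcal{Q}$ for which the $n$-fold refinement under $\tau^{\mathsf M}$ sits inside $\exp_u({\bf B}^{s,\mathsf M}_{\rm A'})$. But the stratification you propose does not deliver the inclusion, and this is a genuine gap, not a gap in exposition.

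Your cubes at the point $\tau^{k\mathsf M}(\underline\vf,u)$ have side length comparable to $\eta'\kappa^{-1}(\tau^{k\mathsf M}(\underline\vf,u),0)/C$, measured in the \emph{current} chart norm $\|\cdot\|'_{\tau^{k\mathsf M}(\underline\vf,u),0}$. But the condition defining ${\bf B}^{s,\mathsf M}_{\rm A'}$ at step $k$ is a bound in the norm $\|\cdot\|^{'',\mathsf M}_{(\underline\vf',u),k} = \|\cdot\|'_{(\tau')^{{\mathsf M}_k^{\rm A'}}(\underline\vf',u),(k-{\mathsf M}_k^{\rm A'})\mathsf M}$, which is the chart from the \emph{last return time} to ${\rm A'}$, pushed forward through the excursion. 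Converting between these two norms at $u_k$ through Lemma~\ref{lem-LQ}~i) costs a factor $\kappa\big((\tau')^{{\mathsf M}_k^{\rm A'}}(\underline\vf',u),(k-{\mathsf M}_k^{\rm A'})\mathsf M\big)/\kappa_0$, and the target radius on the right-hand side of the Bowen-ball condition also carries a factor $\kappa^{-1}\big((\tau')^{{\mathsf M}_k^{\rm A'}}(\underline\vf',u),(k-{\mathsf M}_k^{\rm A'})\mathsf M\big)$. Hence your inclusion would require
\[
\kappa\big((\tau')^{{\mathsf M}_k^{\rm A'}}(\underline\vf',u),(k-{\mathsf M}_k^{\rm A'})\mathsf M\big)^2 \;\lesssim\; C\,\kappa\big(\tau^{k\mathsf M}(\underline\vf,u),0\big).
\]
The left side grows like $e^{2(k-{\mathsf M}_k^{\rm A'})\mathsf M\epsilon}\cdot\kappa_0\eta'/l'$ by the subexponential growth (\ref{cite-kappa}) together with the bound on $\kappa$ on ${\rm A'}$, i.e., exponentially in the excursion length, while the right side can remain of order one: $\kappa$ at the current time carries no lower bound in terms of how long the orbit has been away from ${\rm A'}$ (the paper points out explicitly that $\kappa((\underline\vf,u),n)\neq\kappa(\tau^n(\underline\vf,u),0)$, and moreover an orbit can sit outside ${\rm A'}$ for a long time merely because $\sigma^{k\mathsf M}\underline\vf\notin{\rm A}(b)$, without $\kappa$ being large). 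So the factor $e^{({\mathsf M}_{k+1}^{\rm A'}-{\mathsf M}_k^{\rm A'})\mathsf M\epsilon}$ you hoped to ``absorb into a geometric constant $C$'' is in fact unbounded, and no fixed $C$ will do.

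The paper's construction avoids this by stratifying ${\rm A'}$ according to the \emph{first return time} $n$ of $\tau^{\mathsf M}$ rather than by chart level sets, and by making the transverse slice ${\rm A'}_{n,\ell}$ have diameter on the order of $(2(l')^2 b)^{-1}e^{-2(L+\epsilon_0)n\mathsf M}$; outside of ${\rm A'}$ there is a single atom and no further subdivision is needed. The exponential shrinking of the cube with $n$ is exactly what is required to defeat the worst-case chart degradation during an excursion of length $n\mathsf M$, which is controlled by $\underline\vf\in{\rm A}(b)$ via $\prod_{i}\max_u\|\vf_i\|_{C^2}\leq be^{2Ln}$. Finite entropy then follows because the cardinality $\mathcal{C}_n$ of the slicing grows at most exponentially in $n\mathsf M$, while $\mu_\rho({\rm A'}_n)$ is summable against $n$ by Kac's lemma. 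If you want to rescue your construction, you would need to make your cube sizes at a return time depend on the upcoming return time, which is precisely the paper's device.
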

\begin{proof} Clearly, (\ref{loc-ent-parti}) is a consequence of (\ref{Q-partition}) and (\ref{Fo-Mane-mathsfM}). Hence, it suffices to show (\ref{Q-partition}). Let  $  \eta', \epsilon', {\rm A'}$  be as in (\ref{theset-A'}). 
  For  $(\underline{\vf}, u)\in {\rm A'}$,  $\kappa_1>0$ and $n\in \Bbb N$,  set
\begin{equation*}
\begin{split}
&B^{s,\mathsf M, \kappa_1, \kappa}_{\rm A'}(\underline{\vf}',  u,  \eta', n ):=\\
&\!\left\{ {w} \in \ov W^s(u) :  \; d\left(\underline\vf|_{k\mathsf M} (w),  \underline\vf|_{k\mathsf M} (u)\right) < \eta' \kappa_1\!\left(\kappa\big((\tau')^{{\mathsf M}_k^{\rm A'}}(\underline\vf', u), (k-{\mathsf M}^{\rm A'}_k)\mathsf M\big)\right)^{-2},  \forall  0 \leq k \leq n\right\}.
\end{split}
\end{equation*}
By Lemma \ref{lem-LQ} i), we see that there exists  some constant $\kappa_1$ depending on the geometry of $(M, g)$ such that,   for almost all   $(\underline{\vf}, u)\in {\rm A'}$ and  all  $n\in \Bbb N\cup\{0\}$, 
\begin{equation*}
B^{s,\mathsf M, \kappa_1, \kappa}_{\rm A'}(\underline{\vf}',  u,  \eta', n )\subset \exp_u\big({\bf B}^{s, \mathsf{M}}_{\rm A'}(\underline \vf',  u,  \eta', n )\big).
\end{equation*}
Hence,  to find a countable partition $\mathcal{Q}$ satisfying (\ref{Q-partition}), it suffices to find a $\mathcal{Q}$ such that 
\[
\mathcal{Q}_{\mathsf M, -n}(\underline\vf, u)\subset B^{s,\mathsf M, \kappa_1, \kappa}_{\rm A'}(\underline{\vf}',  u,  \eta', n ). 
\]
For 
each $n\in \Bbb N\cup\{0\}$, let  ${\rm A'}_n\subset {\rm A'}$ be the collection of points with $n$ as the first return time to ${\rm A'}$ with respect to  the map $\tau^{\mathsf M}$.  Recall that the local   stable leaf  $\ov {W}^s_{loc, \epsilon_0} (u)= \{ w \in \ov{W}^s (u) :\  d_{\ov{W}^s}(w,u) < \epsilon_0\}$ depends continuously on $u$ and for each $n$, we can choose in a continuous way a  maximal $(4(l')^2b)^{-1}e^{-2(L+\epsilon_0) n\mathsf M}$  separated set in $\ov {W}^s_{loc, \epsilon_0} (u)$.  The cardinality $\mathcal{C}_n$ of such sets satisfies $\mathcal{C}_n \leq K^{n\mathsf M}$ for some $K$. Using these points, we can further slice ${\rm A'}_n$  into $\{{\rm A'}_{n,\ell}\}_{\ell \leq \mathcal{C}_n}$ such that for all $(\underline{\vf}, u)\in {\rm A'}_{n, \ell}$,  the intersection   $\{w:\ (\underline{\vf}, w)\in {\rm A'}_{n, \ell}\}\cap \ov {W}^s_{loc, \epsilon_0} ( u)$   has diameter less than $(2(l')^2b)^{-1}e^{-2(L+\epsilon_0)n\mathsf M}$.
The partition $\mathcal{Q}$ can be chosen to be \[
\{{\rm A'}_{n,\ell}, n\in \Bbb N\cup\{0\}, \ell\leq {\mathcal{C}}_n,  \T\backslash {\rm A'}\}. 
\]
Following \cite{Mane}, one  checks that $\mathcal{Q}$ satisfies $-\int \log (\ov{\bf m}_{\rho}(\mathcal{Q}))\: d\mu_{\rho}<+\infty$  and (\ref{Q-partition}).
\end{proof}

\begin{proof}[Proof of Proposition \ref{Thieullen}] Let $-\infty<\rho<0$ be fixed.  In the following, we show,   for every $\epsilon_0>0$, there exists  a finite measurable partition $\mathcal{P}$ of $\T$ satisfying 
\begin{equation}\label{Mane-2-inequ}\underline{h}_{\rho, \mathcal{P}}^s\geq \int \log J(\vf , u) \, d\nu_\rho(\vf)\  d\ov{\bf m}_\rho (u)-5\ov{m}\epsilon_0.  \end{equation}
Then, by definition of $h_{\rho}^s$ and (\ref{Mane-2-inequ}), 
\[
h^s_{\rho}\geq \underline{h}_{\rho, \mathcal{P}}^s\geq \int \log J(\vf , u) \, d\nu_\rho(\vf)\  d\ov{\bf m}_\rho (u)-5\ov{m}\epsilon_0.
\]
This concludes the proof of Proposition \ref{Thieullen} since  $\epsilon_0$ is arbitrary.

Let $\mathsf M$ be such that $ |{\bf c}_0|<\mathsf M\epsilon_0$. 
Let $a>0$ be small and let ${\rm A'}$ and $\mathcal{Q}$ be as in Lemma \ref{follow-Mane-2}.  Then for $\mu_{\rho}$ almost all $(\underline{\vf}, u)\in {\rm A'}$, (\ref{loc-ent-parti}) holds true. 
Set 
\[\underline{h}_{\rho, \mathcal{Q}}^{s, \mathsf M}:=\liminf_{n\to +\infty} -\frac{1}{n} \int \log \ov{\bf m}_{\rho,u}^s \left(\mathcal{Q}_{\mathsf M, -n}(\underline\vf, u)\right) \, d\mu _\rho (\underline \vf, u).\]
For any $\alpha>0$, by our choice of  $\mathcal R$ in Section 4, it is true that (see Proposition \ref{new-prop4.3})
\[{\underline{h}}^s_{\rho, \mathcal Q}\geq \frac{1}{\mathsf M}\underline{h}_{\rho, \mathcal{Q}}^{s, \mathsf M}-\alpha.\]
Hence, by Fatou Lemma, 
\[
{\underline{h}}^s_{\rho, \mathcal Q}\geq  \int _{\rm{A}}\liminf_{n\to +\infty}-\frac{1}{n\mathsf M}\log \ov{\bf m}_{\rho,u}^s \left(\mathcal{Q}_{\mathsf M, -n}(\underline\vf, u)\right) \ d\mu_{\rho}(\underline\vf, u)-\alpha. 
\] 
Since the function $\log J(\vf,u) $ is integrable  and  $ |{\bf c}_0|<\mathsf M\epsilon_0$,   by  using(\ref{loc-ent-parti}), we obtain, for  $a, \alpha>0$  small,     
\[\underline{h}_{\rho, \mathcal{Q}}^s\geq \int \log J(\vf , u) \, d\nu_\rho(\vf)\  d\ov{\bf m}_\rho (u)-4\ov{m}\epsilon_0.\]
Note that  $\mathcal{Q}$  is such that $-\int \log (\ov{\bf m}_{\rho}(\mathcal{Q}))\ d\mu_{\rho}<+\infty$ 
  and for any finite partition  $\mathcal{P}$  such that $\mathcal{Q}$ is finer than it, 
\begin{eqnarray*}
\underline{h}_{\rho, \mathcal{Q}}^s-\underline{h}_{\rho, \mathcal{P}}^s &\leq & \limsup_{n\to +\infty}  \frac{1}{n}  \int H_{\ov{\bf m}^s_{\rho}}(\mathcal{Q}_{-n}| \mathcal{P}_{-n})\, d\mu_{\rho}\\ &\leq &  \limsup_{n\to +\infty}  \frac{1}{n}  \int H_{\ov{\bf m}_{\rho}}(\mathcal{Q}_{-n}| \mathcal{P}_{-n})\, d\mu_{\rho}\: \leq\: \int H_{\ov{\bf m}_{\rho}}(\mathcal{Q}| \mathcal{P})\, d\mu_{\rho}.
\end{eqnarray*}
We  can group the tail elements in $\mathcal{Q}$  together with some care to  obtain a finite partition $\mathcal{P}$ satisfying the requirement in (\ref{Mane-2-inequ}).
\end{proof}

\section{The proof of Proposition \ref{mainprop}}

Let $\ov{\bf m}$ be as in Proposition \ref{mainprop}. To compare $h_{\ov{\bf m}}^s$ with   $h_{\rho_p}^s$, we first formulate  the  entropy  $\underline{h}_{\rho_{p}, \mathcal{P}}^s$  (see (\ref{underline-h-rho}))  in terms of some conditional entropy for the unconditional measure $\mu _\rho$.

Let $\W$ be a lamination of a compact metric space. A measurable partition  is said to be subordinated to $\W$ if its elements are bounded subsets of the leaves of $\W$ with non-empty interiors in the topology of the leaf. We can construct a  partition $\RR$ subordinated to $\ov{\W}^s$ by choosing a finite partition $\XX$ of $\OO^s(SM)$ into sufficiently small sets with non-empty interiors and subdivide each element of $\XX$ into the connected components of its intersection with the leaves.  We may assume  $\mathcal{R}$ is such that each element contains a ball with radius greater than some positive constant.  The partition $\RR$ is measurable if it is constructed as an intersection of an increasing family $\RR ^j , j \in \N,$ of finite partitions into measurable sets. 

Let $\PP$ be a finite partition of $\OO^s(SM)$ and we assume that we have chosen $\XX, \RR = \bigvee_j \RR^j $ as above and that $\PP $ refines $\XX$.  We may assume that the boundaries of the elements of $\PP, \XX $ and $\RR^j$ are all $\ov{\bf m}$-negligible.  The    conditional measures   $\ov {\bf m}^s_{\rho,u}$ in the  definition of $\underline{h}_{\rho, \mathcal{P}}^s$ can be taken on any measurable finite  partition $\RR$ chosen  in the above way,  so that 
\[\underline{h}_{\rho, \mathcal{P}}^s=\liminf_{n\to +\infty} -\frac{1}{n} \int \log \ov{\bf m}_{\rho,u}^s \left(\mathcal{P}_{-n}(\underline\vf, u)\right) \, d\mu _\rho (\underline \vf, u) = \liminf_{n\to +\infty} \frac{1}{n} H_{\mu_\rho} (\PP _{-n}| \RR).\]

Proving Proposition \ref{mainprop} amounts  to proving that, if $\rho_p\to -\infty$ and $ \ov{\bf m}_{\rho_p} \rightarrow \ov{\bf m}$ as $p \rightarrow +\infty $, then 
\begin{equation*}
h^s_{\ov{\bf m}} \; \geq \; \limsup _{p\to +\infty} \sup_\PP   \liminf_{n\to +\infty} \frac{1}{n} H_{\mu_{\rho_p}} (\PP _{-n}| \RR). \end{equation*} 
This is true, if we  can show,  for any $\alpha>0$,  there are partitions $\mathcal{P}, \mathcal{R}$ and $n$ large, such that for all $p$ large  enough, 
\begin{equation}\label{maineq-2}
h^s_{\ov{\bf m}}\geq \frac{1}{n} H_{\mu _{\rho _p}} (\PP _{-n}| \RR)  - 2\a\geq{ \underline{h}}^s_{\rho_p,\PP} - 3\a\geq  h^s_{\rho_p} -5\a. 
\end{equation}
The first inequality in  (\ref{maineq-2}) can be  achieved  if we can find good $\mathcal{P}, \mathcal{R}$  for $\ov{\bf m}$ with $h^s_{-\infty, \mathcal{P}}$ being close to $h^s_{\ov{\bf m}}$.  So we will show  the other two inequalities in (\ref{maineq-2}) first. 

We begin with  the  second inequality in (\ref{maineq-2}),  which  is not trivial in our setting   since the conditional entropy sequence $H_{\mu _{\rho _p}} (\PP _{-n}| \RR)$ is not necessarily  a subadditive sequence in $n$. 

\begin{lem} \label{subdivide} Given $\XX, \RR$ and $\PP$ as above, there exists a countable partition $\QQ$ of $\T$ such that the partition $\RR \bigvee \tau ^{-1} \PP \bigvee \tau ^{-1}\QQ $ is finer than $\tau^{-1} \RR$. Moreover, given $\a >0$,   there  are  $\d$ and $\L$ such that if the diameters of the elements of $\XX$ are smaller than $\d$ and if $\rho < \L,$ one can choose $\QQ$ with  $H_{\mu _\rho}(\QQ) < \a.$  
\end{lem}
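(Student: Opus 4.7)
The plan is to design $\QQ$ so that $\tau^{-1}\QQ$ supplies precisely the extra information needed to pass from $\RR\bigvee\tau^{-1}\PP$ to $\tau^{-1}\RR$. Because each $\vf_0 \in D^\infty(\OO^s(SM))$ preserves the leaves of $\ov\W^s$ individually, the leaf $\ov W^s(\vf_0 u)=\ov W^s(u)$ is recovered from $\RR(u)$; because $\PP$ refines $\XX$, the element $X:=\XX(\vf_0 u)$ is recovered from $\tau^{-1}\PP(\underline\vf,u)$; and by the construction of $\RR$ from $\XX$, the class $\RR(\vf_0 u)$ is precisely the connected component of $X\cap\ov W^s(u)$ containing $\vf_0 u$. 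What remains is to identify the correct component among those that meet $P:=\PP(\vf_0 u)$.

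For each $w\in\OO^s(SM)$, enumerate in a fixed measurable way the connected components of $\XX(w)\cap\ov W^s(w)$ that meet $\PP(w)$, and let $q(w)\in\N$ be the index of the component containing $w$. Let $\QQ$ be the lift to $\T$ of $\{q^{-1}(j):j\in\N\}$. Then $\tau^{-1}\QQ(\underline\vf,u)=q(\vf_0 u)$, combined with the data already produced by $\RR\bigvee\tau^{-1}\PP$, pins down $\RR(\vf_0 u)$, so $\RR\bigvee\tau^{-1}\PP\bigvee\tau^{-1}\QQ$ refines $\tau^{-1}\RR$. This gives the existence statement.

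For the entropy bound, order the enumeration so that, whenever $\PP(w)\cap\ov W^s(w)$ lies in a single component of $\XX(w)\cap\ov W^s(w)$, that component is listed first, so $q(w)=1$ on the set of such $w$. If every element of $\XX$ has diameter less than $\delta$, the only way $\PP(w)\cap\ov W^s(w)$ can spread across several components of $\XX(w)\cap\ov W^s(w)$ is for the leaf $\ov W^s(w)$ to exit and re-enter $\PP(w)$ through the boundary of some $X\in\XX$. The set $B$ of such $w$ lies in an $O(\delta)$-tubular neighborhood of $\bigcup_{X\in\XX}\partial X$; since the boundaries $\partial X$ are $\ov{\bf m}$-null by our choice of $\XX$, one has $\ov{\bf m}(B)\to 0$ as $\delta\to 0$, and by the weak$^*$ convergence $\ov{\bf m}_\rho\to \ov{\bf m}$ (after approximating $B$ by a continuity set) the same smallness holds for $\mu_\rho(B)$ provided $\delta$ is small and $\rho<\Lambda$ for $\Lambda$ sufficiently negative.

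Since $q=1$ off $B$ and takes at most $K$ values on $B$, where $K$ is a uniform bound on the number of components of $X\cap L$ meeting a $\PP$-element (finite by compactness of $\OO^s(SM)$ and finiteness of $\PP$), standard entropy inequalities yield $H_{\mu_\rho}(\QQ)\to 0$ as $\mu_\rho(B)\to 0$, whence $H_{\mu_\rho}(\QQ)<\alpha$ for $\delta$ and $\Lambda$ chosen appropriately. The one point requiring real care is this simultaneous smallness of $\mu_\rho(B)$ for all $\rho<\Lambda$, and it rests on the weak$^*$ convergence of $\ov{\bf m}_\rho$ to $\ov{\bf m}$ together with the $\ov{\bf m}$-negligibility of the $\XX$-boundaries $\partial X$.
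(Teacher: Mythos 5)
Your construction of $\QQ$ for the refinement statement is sound and is in fact cleaner than what the paper does: by pulling $\QQ$ back from $\OO^s(SM)$ via a consistent enumeration of the components of $\XX(w)\cap\ov W^s(w)$ meeting $\PP(w)$, the knowledge of $\RR(u)$ (leaf), $\PP(\vf_0 u)$ ($\XX$-element), and $q(\vf_0 u)$ (index) does pin down $\RR(\vf_0 u)$, and this works even when comparing $(\underline\vf,u)$ and $(\underline\vf',u')$ with different $\underline\vf$'s.

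The entropy estimate, however, has a genuine gap. Your set $B$ is not small; it is essentially all of $\OO^s(SM)$. Each leaf of $\ov\W^s$ is dense in $\OO^s(SM)$ (the stable foliation of the geodesic flow on a closed negatively curved manifold is minimal), so for every $w$ the leaf $\ov W^s(w)$ returns to the small open set $\PP(w)$ infinitely often, exiting $\XX(w)$ between far returns. Hence $\PP(w)\cap\ov W^s(w)$ has infinitely many components, lying in infinitely many distinct components of $\XX(w)\cap\ov W^s(w)$, no matter how small $\delta$ is. Consequently $q$ is not ``$1$ off a small set,'' and the proposed bound $K$ on the number of components meeting a $\PP$-element is $+\infty$, not finite; compactness of $\OO^s(SM)$ does not bound it because the count is over global, not local, plaques. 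There is a second, independent gap: you invoke weak$^*$ convergence $\ov{\bf m}_\rho\to\ov{\bf m}$, but the paper only grants subsequential limits of $\ov{\bf m}_\rho$, whereas the lemma requires uniformity over \emph{all} $\rho<\L$. The paper sidesteps both problems by putting the cut in the $\vf_0$-variable: $\QQ$ refines the shift coordinate according to the level sets $A_n$ of $\|\vf_0\|_{C^1}$, and only subdivides $\OO^s(SM)$ on the high-norm pieces $A_n$, $n\geq 1$. The refinement then follows from the dichotomy $d^s(u,w)<c(\d)$ or $d^s(u,w)\geq C(\d)$, and the entropy bound comes from the fact that $\nu_\rho$ (not $\ov{\bf m}_\rho$) concentrates near the deterministic map $\ov{\bf\Phi}_{-1}$ as $\rho\to-\infty$, which is a genuine, not subsequential, convergence coming from Proposition \ref{con-geo-flow}; this keeps the mass off the tail $\{\|\vf_0\|_{C^1}>C(\d)\}$ and yields $H_{\mu_\rho}(\QQ)<\a$ uniformly for $\rho<\L$.
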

\begin{proof}
For $u,w \in \OO^s(SM)$ in the same $\ov{\W}^s$ leaf, write $d^s(u,w)$ for the distance between $u$ and $w$ along their common leaf. For any $\d >0$,  there are  two  constants $c(\d)$ and  $C(\d) $ such that if $u$ and $w$ are on the same leaf and $d(u,w) <\d$, then either $d^s (u,w) < c (\d)$ or $d^s(u,w) \geq C(\d)$.  We  can ensure that $c(\d) \to 0$ as $\d \to 0$ and  that  $C(\d) \to +\infty$   as $\d \to 0.$    Suppose $u$ and $w$ are in the  same element of the partition $\RR$ and that $\vf_0 u $ and $\vf _0 w $ are in the same element of $\XX$. If $d^s (\vf _0 u, \vf _0 w ) < C(\d)$, in particular,   as soon as  $d^s(u,w) < C(\d) / \|\vf _0\|_{C^1}$, then $\vf_0 u $ and $\vf _0 w $ are in the same connected component of $\ov {\W}^s $ and thus in the same element of $\RR $. 

To obtain Lemma \ref{subdivide}, it is therefore enough to take the partition $\QQ$ of $\T$ as follows: the projection on $\SS $ depends only on the first coordinate $\vf_0$ and is the partition $A_n, n\geq 0,$ 
where $A_n := \{ \vf _0: nC(\d)  \leq \|\vf _0\|_{C^1} \leq (n+1) C(\d) \};$ $A_0 \times \OO^s(SM) $ is one element of $\QQ ;$ on each $A_n, n>0$, we further cut $\OO^sSM$ into $N_n $ pieces of diameter smaller than $1/(n+1)$.

The entropy of $\QQ$ satisfies 
\[ H_{\mu _\rho}(\QQ) \leq H_{\mu _\rho}\big(\{A_n:  n\geq 0\}\big) + {\bf c} \sum _{n=1}^\infty  \nu_\rho (A_n) \log n,\]
where ${\bf c}$ is some constant depending on the geometry of $\ov {\W}^s$.
Given $\a >0$, we will have $H_{\mu _\rho}(\QQ) < \a$ as soon as $\nu_\rho (\{ \vf : \|\vf \|_{C^1} > C(\d) \}) $ and the integral $\int _{\{ \vf : \|\vf \|_{C^1} > C(\d) \}} \log  \|\vf \|_{C^1} \, d\nu _\rho $ are sufficiently small. These two conditions can be realized by choosing $\d$ small and $\rho$ close enough to $-\infty$. 
\end{proof}

\begin{prop}\label{almostsub-add} Given $\a>0$, there is $\d >0 $ and $\L$ such that, for all $n>0$,  if the  diameter of the elements of $\XX$ are smaller than $\d$ and $ \rho < \L$, 
\begin{equation}\label{equ-sub-add} \frac{1}{n} H_{\mu_\rho} (\PP _{-n}| \RR) \; \geq \;    \liminf_{n\to +\infty} \frac{1}{n} H_{\mu_\rho} (\PP _{-n}| \RR) -\a=  \underline{h}^s_{\rho,\PP} - \a.\end{equation} \end{prop}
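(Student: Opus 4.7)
The strategy is to establish an approximate subadditivity of $a_n := H_{\mu_\rho}(\mathcal{P}_{-n}\mid \mathcal{R})$ in $n$, and then apply a Fekete-type argument to conclude that $a_n/n$ cannot drop much below its liminf. First I would fix a smaller $\alpha' < \alpha$ and invoke Lemma~\ref{subdivide} to produce $\delta, \Lambda$ and a countable partition $\mathcal{Q}$ of $\mathcal{T}$ with $H_{\mu_\rho}(\mathcal{Q}) < \alpha'$, enjoying the property that $\mathcal{R} \vee \tau^{-1}\mathcal{P} \vee \tau^{-1}\mathcal{Q}$ is finer than $\tau^{-1}\mathcal{R}$. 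Iterating this refinement by applying $\tau^{-i}$, one obtains by induction that, for each $m \geq 1$, the partition $\mathcal{R} \vee \bigvee_{i=1}^m \tau^{-i}(\mathcal{P} \vee \mathcal{Q})$ refines $\tau^{-m}\mathcal{R}$; the auxiliary partition $\mathcal{Q}^{(m)} := \bigvee_{i=1}^m \tau^{-i}\mathcal{Q}$ has entropy at most $m\alpha'$ by subadditivity and $\tau$-invariance of $\mu_\rho$.

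Next I would derive the almost-subadditivity estimate $a_{n+m} \leq a_n + a_m + H_{\mu_\rho}(\mathcal{P}) + m\alpha'$. Decomposing $\mathcal{P}_{-(n+m)} = \mathcal{P}_{-m} \vee \tau^{-m}\mathcal{P}_{-n}$, the chain rule gives $a_{n+m} = a_m + H_{\mu_\rho}(\tau^{-m}\mathcal{P}_{-n} \mid \mathcal{R} \vee \mathcal{P}_{-m})$. I would then apply the standard inequality $H(A|B) \leq H(A|C) + H(C|B)$ with $C = \tau^{-m}\mathcal{R}$, combined with the $\tau$-invariance identity $H_{\mu_\rho}(\tau^{-m}\mathcal{P}_{-n} \mid \tau^{-m}\mathcal{R}) = a_n$, to reduce matters to bounding $H_{\mu_\rho}(\tau^{-m}\mathcal{R} \mid \mathcal{R} \vee \mathcal{P}_{-m})$. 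The key observation is that $\mathcal{P}_{-m} \vee \tau^{-m}\mathcal{P} = \mathcal{P}_{-(m+1)}$ refines $\tau^{-1}\mathcal{P}_{-m}$, so by the previous paragraph $\mathcal{R} \vee \mathcal{P}_{-m} \vee \tau^{-m}\mathcal{P} \vee \mathcal{Q}^{(m)}$ refines $\tau^{-m}\mathcal{R}$; consequently
\[H_{\mu_\rho}\bigl(\tau^{-m}\mathcal{R} \mid \mathcal{R} \vee \mathcal{P}_{-m}\bigr) \;\leq\; H_{\mu_\rho}(\tau^{-m}\mathcal{P}) + H_{\mu_\rho}(\mathcal{Q}^{(m)}) \;\leq\; H_{\mu_\rho}(\mathcal{P}) + m\alpha'.\]

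Finally, iterating the almost-subadditivity $k$ times at step $n$ gives $a_{kn} \leq k a_n + (k-1)(H_{\mu_\rho}(\mathcal{P}) + n\alpha')$; dividing by $kn$ and letting $k \to \infty$ produces $\underline{h}^s_{\rho,\mathcal{P}} \leq a_n/n + \alpha' + H_{\mu_\rho}(\mathcal{P})/n$, which rearranges to the desired inequality once $\alpha'$ is chosen small enough and $n$ is large enough to absorb the tail $H_{\mu_\rho}(\mathcal{P})/n$. The main obstacle is precisely the appearance of this extraneous constant $H_{\mu_\rho}(\mathcal{P})$; it stems from a mismatch between the natural indexing of the iterated refinement (which produces $\tau^{-1}\mathcal{P}_{-m}$) and the actual conditioning partition $\mathcal{P}_{-m}$, so that bridging the two forces one to insert the extra partition $\tau^{-m}\mathcal{P}$. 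This restricts the effective conclusion to $n$ sufficiently large (depending on $\alpha$ and $H_{\mu_\rho}(\mathcal{P})$), which is harmless for the applications since $n$ is eventually sent to infinity in both Propositions~\ref{Thieullen} and~\ref{mainprop}.
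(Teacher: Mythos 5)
Your argument is correct in its internal logic but takes a genuinely different route from the paper, and it yields a slightly weaker conclusion. The paper does not work directly with $a_n = H_{\mu_\rho}(\PP_{-n}\mid\RR)$; instead it introduces the augmented sequence $b_n := H_{\mu_\rho}(\PP_{-n}\vee\QQ_{-n}\mid\RR)$ and claims that $b_n$ is \emph{exactly} subadditive, using the asserted refinement $\RR\vee\PP_{-n}\vee\QQ_{-n}\geq\tau^{-n}\RR$. From exact subadditivity, $\underline{h}^s_{\rho,\PP}=\liminf_n a_n/n\leq\lim_n b_n/n=\inf_n b_n/n\leq\inf_n a_n/n+H_{\mu_\rho}(\QQ)$, which gives the stated inequality for \emph{all} $n$. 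Your approach, by contrast, derives an approximate subadditivity $a_{n+m}\leq a_n+a_m+H_{\mu_\rho}(\PP)+m\alpha'$ for $a_n$ itself, and the constant $H_{\mu_\rho}(\PP)$ that you carefully trace to the indexing mismatch (needing $\tau^{-m}\PP$ to close the chain from $\tau^{-1}\PP_{-m}$ to $\PP_{-m}$) survives as a $H_{\mu_\rho}(\PP)/n$ term, restricting your conclusion to $n\geq H_{\mu_\rho}(\PP)/(\alpha-\alpha')$. You are right that this suffices for the applications (in the proof of Proposition~\ref{mainprop} the relevant $n$ can be taken arbitrarily large), but it falls short of the ``for all $n>0$'' claim in the statement.

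It is worth pointing out that the indexing subtlety you flagged is in fact present in the paper's own argument. Lemma~\ref{subdivide} gives (after reading its proof carefully) that $\RR\vee\tau^{-1}\PP\vee\QQ$ refines $\tau^{-1}\RR$, and iterating yields $\RR\vee\bigl(\bigvee_{i=1}^n\tau^{-i}\PP\bigr)\vee\QQ_{-n}$ refines $\tau^{-n}\RR$ --- not $\RR\vee\PP_{-n}\vee\QQ_{-n}\geq\tau^{-n}\RR$ as the paper asserts, since $\PP_{-n}$ stops at $\tau^{-(n-1)}\PP$ and does not contain the needed $\tau^{-n}\PP$. A careful patch of the paper's argument would also produce a bounded extraneous term (the single extra $\tau^{-n}\PP$ can be absorbed into the block $\PP_{-(n+n')}^{-n}$, but then one pays $H(\tau^{-n}\PP\mid\RR\vee\PP_{-n}\vee\QQ_{-n})\leq H_{\mu_\rho}(\PP)$), so the sequence $b_n$ is only subadditive up to a bounded constant and the resulting conclusion is again for $n$ large enough. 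In other words, you did not miss an idea that would have given you the exact inequality for all $n$; you correctly identified a genuine subtlety and handled it honestly. The paper's formulation glosses over it, but since the eventual use of the proposition is always with $n$ large, neither version causes a problem downstream.
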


\begin{proof}
 Let $\mathcal{Q}$ be as in Lemma \ref{subdivide}.  Then we have that the mapping $n \mapsto H_{\mu _\rho} \big(\PP _{-n} \bigvee \QQ_{-n} \big| \RR\big) $ is subadditive.  Indeed,  for $n, n'\in \Bbb N$, \begin{eqnarray*}
 H_{\mu _\rho} \left(\PP _{-(n+n')} \bigvee \QQ_{-(n+n')} \big| \RR\right) &=& H_{\mu _\rho} \left(\PP _{-n} \bigvee \QQ_{-n} \big| \RR\right) \\
 && +\ H_{\mu _\rho} \left(\PP _{-(n+n')}^{-n} \bigvee \QQ_{-(n+n')}^{-n} \big| \RR \bigvee \PP _{-n} \bigvee \QQ_{-n}\right),
 \end{eqnarray*}
where $\PP _{-(n+n')}^{-n}:=\tau^{-n}\mathcal{P}\bigvee \cdots\bigvee\tau^{-(n+n'-1)}\mathcal{P}$ and $\QQ _{-(n+n')}^{-n}$ is defined in the same way.
Moreover, by Lemma \ref{subdivide}, the partition $\RR \bigvee \PP _{-n} \bigvee \QQ_{-n}$ is finer than $\tau ^{-n} \RR$ and the last term is smaller than $H_{\mu _\rho} \big(\PP _{-(n+n')}^{-n} \bigvee \QQ_{-(n+n')}^{-n} \big|\tau ^{-n} \RR \big).$  The desired  subaddivity follows by invariance of $\mu _\rho $ under $\tau ^n .$  Hence (\ref{equ-sub-add}) follows  since 
\begin{eqnarray*}
\liminf_{n\to +\infty}\frac{1}{n} H_{\mu _\rho} (\PP _{-n} | \RR) & \leq & \liminf_{n\to +\infty} \frac{1}{n} H_{\mu _\rho} \left(\PP _{-n} \bigvee \QQ_{-n} \big| \RR\right)\\
& =&\inf _n \frac{1}{n} H_{\mu _\rho} \left(\PP _{-n} \bigvee \QQ_{-n} \big| \RR\right)\\
& \leq & \inf_n \frac{1}{n} H_{\mu _\rho} (\PP _{-n}  | \RR) + H_{\mu _\rho}(\QQ) \\
& \leq & \inf_n \frac{1}{n} H_{\mu _\rho} (\PP _{-n}  | \RR) + \alpha. 
\end{eqnarray*}
\end{proof}

\begin{prop}\label{new-prop4.3} Let $\mathsf M\in \Bbb N$ and  let  $\PP$ be as in Proposition \ref{almostsub-add}. Then 
\begin{equation}\label{mathsfM-2}
\underline{h}_{\rho, \mathcal{P}}^{s, \mathsf M}\leq  {\mathsf M} \cdot \left({\underline{h}}^s_{\rho, \PP}+\alpha\right).
\end{equation}
\end{prop}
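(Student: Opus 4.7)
The plan is to control the $\mathsf M$-step entropy sequence by the full entropy sequence via a coarser/finer argument, and then invoke the almost-subadditivity underlying Proposition \ref{almostsub-add} to upper-bound the resulting subsequential $\liminf$.

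The first step is the observation that $\PP_{\mathsf M,-n}=\bigvee_{k=0}^{n-1}(\tau^{\mathsf M})^{-k}\PP$ uses only the indices $\{0,\mathsf M,\ldots,(n-1)\mathsf M\}$, whereas $\PP_{-n\mathsf M}=\bigvee_{j=0}^{n\mathsf M-1}\tau^{-j}\PP$ uses every integer in $[0,n\mathsf M)$; hence $\PP_{\mathsf M,-n}$ is coarser than $\PP_{-n\mathsf M}$, which implies
\[
\frac{1}{n}H_{\mu_\rho}(\PP_{\mathsf M,-n}\mid\RR) \;\leq\; \mathsf M\cdot \frac{1}{n\mathsf M}H_{\mu_\rho}(\PP_{-n\mathsf M}\mid\RR).
\]
Taking $\liminf$ as $n\to\infty$ and using that a $\liminf$ along the arithmetic subsequence $k=n\mathsf M$ is bounded above by the full $\limsup$, one obtains
\[
\underline{h}_{\rho,\PP}^{s,\mathsf M} \;\leq\; \mathsf M\cdot \limsup_{k\to\infty}\frac{1}{k}H_{\mu_\rho}(\PP_{-k}\mid\RR).
\]

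The second step is to extract from the proof of Proposition \ref{almostsub-add} the slightly stronger statement that this $\limsup$ is itself bounded by $\underline{h}^s_{\rho,\PP}+\alpha$. With the countable partition $\QQ$ from Lemma \ref{subdivide} satisfying $H_{\mu_\rho}(\QQ)<\alpha$, the sequence $b_n:=H_{\mu_\rho}(\PP_{-n}\vee\QQ_{-n}\mid\RR)$ is subadditive, so $\lim_n b_n/n=\inf_n b_n/n$ exists. From the sandwich $H_{\mu_\rho}(\PP_{-n}\mid\RR)\leq b_n\leq H_{\mu_\rho}(\PP_{-n}\mid\RR)+nH_{\mu_\rho}(\QQ)$, the upper inequality yields $\lim b_n/n\leq \underline{h}^s_{\rho,\PP}+\alpha$ after passing to $\liminf$, while the lower inequality gives $\limsup_n \tfrac{1}{n}H_{\mu_\rho}(\PP_{-n}\mid\RR)\leq \lim b_n/n$. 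Chaining these with the previous display produces $\underline{h}_{\rho,\PP}^{s,\mathsf M}\leq\mathsf M(\underline{h}^s_{\rho,\PP}+\alpha)$, as required.

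The one conceptually delicate point is the appearance of $\limsup$ rather than $\liminf$ in the intermediate step: the bare definition of $\underline{h}^s_{\rho,\PP}$ as a $\liminf$ controls subsequential limits only from below, so a naive comparison of the $\mathsf M$-step and $1$-step $\liminf$s would fail. The almost-subadditivity extracted from Proposition \ref{almostsub-add} is precisely what supplies the matching upper bound on the full $\limsup$, and hence on every subsequential limit, at the cost of the additive slack $\alpha$.
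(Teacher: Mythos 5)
Your proof is correct and takes essentially the same approach as the paper's: both hinge on the coarsening $\PP_{\mathsf M,-n}\preceq\PP_{-n\mathsf M}$ and then invoke the subadditivity of the $\QQ$-augmented entropy sequence $n\mapsto H_{\mu_\rho}(\PP_{-n}\vee\QQ_{-n}\mid\RR)$ from Lemma~\ref{subdivide} to control the arithmetic subsequence, with the final slack $H_{\mu_\rho}(\QQ)<\alpha$. The only cosmetic difference is that you route through $\limsup_k\frac{1}{k}H_{\mu_\rho}(\PP_{-k}\mid\RR)$ as an intermediate quantity and apply both sides of the sandwich, whereas the paper simply coarsens to the $\QQ$-augmented sequence at the subsequence stage and uses subadditivity to identify its subsequential $\liminf$ with the full $\liminf$; the two arguments are interchangeable.
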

\begin{proof} Let $\mathcal{Q}$ be as in Lemma \ref{subdivide}.  Recall that \[\underline{h}_{\rho, \mathcal{P}}^{s, \mathsf M}=\liminf_{n\to +\infty} -\frac{1}{n} \int \log \ov{\bf m}_{\rho,u}^s \left(\mathcal{P}_{\mathsf M, -n}(\underline\vf, u)\right) \, d\mu _\rho (\underline \vf, u)=\liminf_{n\to +\infty} \frac{1}{n} H_{\mu_\rho} \big(\mathcal{P} _{\mathsf M, -n}\big| \RR\big).\] Hence, 
\begin{align*}
\underline{h}_{\rho, \mathcal{P}}^{s, \mathsf M}\leq \underline{h}_{\rho, \mathcal{P}_{-\mathsf M}}^{s, \mathsf M}=& {\mathsf M}\cdot \liminf_{n\to +\infty} \frac{1}{n{\mathsf M}} H_{\mu_\rho} \big(\mathcal{P}_{-\mathsf M n}\big| \RR\big)\notag\\
\leq &  {\mathsf M}\cdot \liminf_{n\to +\infty} \frac{1}{n{\mathsf M}} H_{\mu_\rho} \left(\left.\mathcal{P}_{-\mathsf M n}\bigvee \QQ_{-\mathsf M n} \right| \RR\right)\notag\\
=&  {\mathsf M}\cdot \liminf_{n\to +\infty} \frac{1}{n} H_{\mu_\rho} \left(\left.\mathcal{P}_{-n}\bigvee \QQ_{-n} \right| \RR\right)\notag\\
\leq & {\mathsf M} \cdot \left({\underline{h}}^s_{\rho, \PP}+H_{\mu _\rho}(\QQ)\right)\notag\\
\leq &  {\mathsf M} \cdot \left({\underline{h}}^s_{\rho, \PP}+\alpha\right). 
\end{align*}
\end{proof}

Next we show the last inequality in (\ref{maineq-2}).   For this, we  first state the results extending to our context the classical results of \cite{B}, \cite{Y} and \cite{Bu} (compare with \cite{CY}).

For $u \in \OO^s (SM), \un \vf \in \S, \eta >0$ and $ n \in \N$, define the \emph{random $\ov \W^s$-Bowen  ball} by
\[ B^s (\underline \vf,  u,  \eta, n )  := \left\{ {w} \in \ov W^s(u) :  \; d\left(\underline\vf|_k (w),  \underline\vf|_k (u)\right) < \eta  {\textrm { for } }  0 \leq k \leq n\right\}.\]
The following notion was introduced by Bowen (\cite{B}) for a single map and by  Cowieson-Young (\cite{CY}) in the random case. Since our mappings are smooth only along the 
foliation $\ov \W^s$, we introduce a variant by restricting to the leaves $\ov W^s.$
Fix $\zeta >0$ and a sequence $\un \vf \in \S$.  We denote  for $u\in \OO^s (SM), \, \eta >0$ and $n \in \N,$ $r(\zeta, \un \vf, u, \eta, n)$ the smallest number of  random $\ov \W^s$-Bowen balls $B^s(\un \vf, w, \eta, n)$ needed to cover the  random $\ov \W^s$-Bowen ball $B^s(\un \vf, u, \zeta, n)$. We then set  \[ h^s_{loc }(\zeta , \un \vf) \; := \; \sup_{u \in \OO^s(SM)} \lim\limits_{\eta \to 0} \limsup _{n\to +\infty} \frac{1}{n} \log r(\zeta, \un \vf, u, \eta, n).\]
The function $\un\vf \mapsto h^s_{loc }(\zeta , \un \vf) $ is $\s$-invariant; we denote $h^s_{loc ,\rho }(\zeta )$ its $\nu _\rho^{\otimes \N\cup \{0\}}$-a.e. value. 

 The following three propositions (Proposition \ref{Bowen}, Proposition \ref{killconstant}  and Proposition \ref{volumegrowth}) are proven in \cite{CY} for the global entropy with the additional hypotheses that $\nu_\rho$ are supported in a fixed neighborhood $\mathcal N$ of $\ov {\bf \Phi }_{-1}$ in $D^\infty (\OO^sSM) $ and that $\nu_\rho $ converge to $\nu _{-\infty}$ as $\rho \to - \infty$, in the sense that any  $D^\infty (\OO ^sSM) $ neighborhood of $\ov {\bf \Phi }_{-1}$  has eventually full measure for $\nu _\rho.$ In our case, we have two extensions of the argument in \cite{CY}: one is that the distributions $\nu _\rho $ are  not supported on a neighbourhood of $\ov {\bf \Phi }_{-1}$, but there is a tail; the other extension is that our mappings are not smooth everywhere, but only along the leaves of the foliation $\ov \W^s$.

\begin{prop}\label{Bowen} Given $\a >0, \zeta >0$, let $\XX$ be as in Proposition  \ref{almostsub-add}. Assume that the diameters of the elements of $\PP\cap \RR$ are all smaller than $\zeta .$ Then, for all $\rho$ close enough to $-\infty ,$
\begin{equation}\label{Bowen-S3} h^s_\rho - {\underline{h}}^s_{\rho, \PP} \; \leq \; h^s_{loc ,\rho }(\zeta)+\alpha.
\end{equation}
 \end{prop}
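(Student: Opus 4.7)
The plan adapts Bowen's classical inequality (\cite{B}) bounding measure-theoretic entropy by partition entropy plus a local spreading term, to the leafwise random setting as in Cowieson-Young (\cite{CY}). Since $h^s_\rho = \sup _{\PP'} \underline{h}^s_{\rho, \PP'}$, I would fix a finite partition $\tilde\PP$ of $\T$ with $\mu _\rho$-negligible boundary that refines both $\PP$ and the partition $\QQ$ from Lemma~\ref{subdivide} (such a refinement only increases $\underline{h}^s_\rho$), and satisfies $\underline{h}^s_{\rho, \tilde\PP} \geq h^s_\rho - \alpha/4$. The key structural consequence of refining by $\QQ$ is that for any $w \in \tilde\PP_{-n}(\underline\vf, u) \cap \RR(u)$, Lemma~\ref{subdivide} forces $\underline\vf|_k w$ and $\underline\vf|_k u$ to lie in a common element of $\PP \cap \RR$ at every step $0 \leq k < n$; since each such element has diameter less than $\zeta$, we conclude that $\tilde\PP_{-n}(\underline\vf, u) \cap \RR(u) \subset B^s(\underline\vf, u, \zeta, n)$.

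By the chain rule and $H_{\mu_\rho}(\QQ) < \alpha/4$,
\[ \tfrac{1}{n} H_{\mu_\rho}(\tilde\PP_{-n} | \RR) \leq \tfrac{1}{n} H_{\mu_\rho}(\PP_{-n} | \RR) + \alpha/4 + \tfrac{1}{n} H_{\mu_\rho}(\tilde\PP_{-n} | \RR \vee (\PP \vee \QQ)_{-n}). \]
Taking $\liminf_n$ and using $\liminf(a_n + b_n) \leq \liminf a_n + \limsup b_n$, this bounds $\underline{h}^s_{\rho, \tilde\PP}$ by $\underline{h}^s_{\rho, \PP} + \alpha/4 + \limsup_n$ of the last term. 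To handle that $\limsup$, I would cover $B^s(\underline\vf, u, \zeta, n)$ by $r(\zeta, \underline\vf, u, \eta, n)$ Bowen balls of radius $\eta$; for $\eta$ chosen small enough, the $\mu_\rho$-negligibility of $\partial \tilde\PP$ together with an inner-regularity argument ensures that outside a $\mu_\rho$-set of measure at most $\alpha /(4\log\#\tilde\PP)$, each $\eta$-Bowen ball lies in a single element of $\tilde\PP_{-n}$. This yields
\[ \tfrac{1}{n} H_{\mu_\rho}(\tilde\PP_{-n} | \RR \vee (\PP \vee \QQ)_{-n}) \leq \int \tfrac{1}{n} \log r(\zeta, \underline\vf, u, \eta, n) \, d\mu_\rho + \alpha/4. \]
Since $\tfrac{1}{n}\log r$ is dominated by an integrable function via the random Ruelle inequality (\cite{BB, Ki2}), reverse Fatou and the definition of $h^s_{loc, \rho}(\zeta)$ give $\limsup_n \int \tfrac{1}{n}\log r \, d\mu_\rho \leq h^s_{loc, \rho}(\zeta)$ as $\eta \to 0$. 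Combining everything yields $h^s_\rho \leq \underline{h}^s_{\rho, \PP} + h^s_{loc, \rho}(\zeta) + \alpha$, which is (\ref{Bowen-S3}).

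The hypothesis ``$\rho$ close to $-\infty$'' enters in two ways: (i) through Lemma~\ref{subdivide}, which requires $\rho < \Lambda$ to guarantee $H_{\mu_\rho}(\QQ) < \alpha/4$; and (ii) through uniform-in-$\rho$ control of $\mu_\rho$-measure of small neighborhoods of $\partial \tilde\PP$, which rests on the convergence $\ov{\bf m}_\rho \to \ov{\bf m}$ and on the moment estimates of Proposition~\ref{con-geo-flow}. The main obstacle is precisely this last boundary control: because $\nu_\rho$ is \emph{not} supported in any $D^\infty (\OO^s SM)$-neighborhood of $\ov {\bf \Phi }_{-1}$ but carries a genuine heavy tail of random diffeomorphisms with unbounded $C^1$-norms, the compact-support argument of \cite{CY} must be replaced by integrated Burkholder-type estimates using Proposition~\ref{stochasticflow}(v) and the integrability $\int (\log \|\vf\|_{C^1})^+ \, d\nu_\rho < +\infty$, so that the claim ``each small $\eta$-Bowen ball lies in a single element of $\tilde\PP_{-n}$'' holds uniformly in $n$ outside a controllably small set.
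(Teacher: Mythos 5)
Your argument diverges from the paper's at a crucial point and, as written, has a genuine gap.

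The paper's proof is indirect: it fixes $\mathsf M \in \N$, runs a Bowen-type comparison ``following \cite[Section~3]{B}'' for the iterated system $\tau^{\mathsf M}$, which produces the inequality $h^{s,\mathsf M}_\rho \leq \underline{h}^{s,\mathsf M}_{\rho,\PP} + \mathsf M\bigl(h^s_{loc,\rho}(\zeta)+\beta\bigr) + \mathbf{c}$ with an \emph{additive} geometric constant $\mathbf{c}$; it then combines this with $h^{s,\mathsf M}_\rho \geq \mathsf M\, h^s_\rho$ and with Proposition~\ref{new-prop4.3} (\ref{mathsfM-2}), divides by $\mathsf M$, and lets $\mathsf M \to +\infty$ to kill the constant $\mathbf{c}/\mathsf M$. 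The whole point of the block structure is that Bowen's covering argument cannot be made to yield a vanishing error per step; it produces a fixed additive constant that must be eliminated by time-rescaling.

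Your proposal tries to avoid that constant altogether by asserting that, for $\eta$ small, ``outside a $\mu_\rho$-set of measure at most $\alpha/(4\log\#\tilde\PP)$, each $\eta$-Bowen ball lies in a single element of $\tilde\PP_{-n}$,'' and that this holds \emph{uniformly in $n$} via negligibility of $\partial\tilde\PP$ and inner regularity. This is the gap. The exceptional set is (up to bookkeeping) $\bigcup_{k=0}^{n-1}\tau^{-k}\bigl(\{d(\cdot,\partial\tilde\PP)<\eta\}\bigr)$, whose $\mu_\rho$-measure is bounded only by $n\,\mu_\rho\bigl(\{d(\cdot,\partial\tilde\PP)<\eta\}\bigr)$ and need not stay below any fixed threshold as $n\to\infty$ for a fixed $\eta>0$. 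In fact, what Bowen's argument actually controls is the \emph{number} of atoms of $\tilde\PP_{-n}$ that an $\eta$-Bowen ball can meet, and that number grows like $K^n$ for some $K$ depending on $\PP,\eta$; this is precisely the origin of the constant $\mathbf{c}$ that the paper then kills by passing to $\tau^{\mathsf M}$. Your proposal never uses Proposition~\ref{new-prop4.3} nor anything playing the role of Proposition~\ref{killconstant}, so the mechanism that eliminates $\mathbf{c}$ is absent. (A smaller issue: a \emph{finite} $\tilde\PP$ cannot refine the countable $\QQ$; you would have to truncate $\QQ$ or work with countable $\tilde\PP$ of finite $\mu_\rho$-entropy, as the paper does for $\mathcal Q$.)

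The salvageable parts of your write-up are the chain-rule decomposition and the inclusion $\tilde\PP_{-n}(\underline\vf,u)\cap\RR(u)\subset B^s(\underline\vf,u,\zeta,n)$ coming from Lemma~\ref{subdivide}; those are correct and in the spirit of the paper. But they must be fed into the $\mathsf M$-block comparison, not into a direct $n\to\infty$ limit, because only the former removes the additive geometric constant.
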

\begin{proof} 
Let $\mathsf M$ be a fixed positive integer   and  set 
$
h^{s, \mathsf M}_\rho := \sup_{\PP}\underline{h}_{\rho, \mathcal{P}}^{s, \mathsf M}$. 
Since
\[
\underline{h}_{\rho, \mathcal{P}_{-\mathsf M}}^{s, \mathsf M}={\mathsf M}\cdot \liminf_{n\to +\infty} \frac{1}{n{\mathsf M}} H_{\mu_\rho} \big(\mathcal{P}_{-\mathsf M n}\big| \RR\big)\geq {\mathsf M} \cdot {\underline{h}}^s_{\rho, \PP}, 
\]
we have 
\begin{equation}\label{mathsfM-1}
h^{s, \mathsf M}_\rho \geq   \sup_{\PP}\underline{h}_{\rho, \mathcal{P}_{-\mathsf M}}^{s, \mathsf M} \geq {\mathsf M}\cdot  \sup_{\PP}\underline{h}_{\rho, \mathcal{P}}^{s, \mathsf M}={\mathsf M}\cdot h^{s}_\rho. 
\end{equation}
Following \cite[Section 3]{B},  we obtain in our random setting that there is some positive constant ${\bf  c}$ which depends on the geometry of $\ov {\W}^s$ such that  for any $\beta>0$, 
\[
h^{s, \mathsf M}_\rho \leq \underline{h}_{\rho, \mathcal{P}}^{s, \mathsf M}+{\mathsf M}\big(h^s_{loc ,\rho }(\zeta)+\beta\big)+{\bf c}. 
\]
Using  (\ref{mathsfM-2}) and (\ref{mathsfM-1}), we deduce that \[
h^{s}_\rho\leq \underline{h}_{\rho, \mathcal{P}}^{s}+h^s_{loc ,\rho }(\zeta)+\alpha+\beta+\frac{1}{\mathsf{M}}{\bf  c}. 
\]
Letting $\beta\to 0$ and then $\mathsf M\to +\infty$, we obtain the inequality (\ref{Bowen-S3}). \end{proof}

Let $\mathsf M$ be a fixed positive integer. We define  for $u \in \OO^s (SM), \un \vf \in \S, \eta >0$ and $ n \in \N$,
\[ B^{s,\mathsf M} (\underline \vf,  u,  \eta, n )  := \left\{ {w} \in \ov W^s(u) :  \; d\left(\underline\vf|_{k\mathsf M} (w),  \underline\vf|_{k\mathsf M} (u)\right) < \eta  {\textrm { for } }  0 \leq k \leq n\right\},\]
$r^\mathsf M (\zeta, \un \vf, u, \eta, n)$ the smallest number of  $B^{s,\mathsf M}(\un \vf, w, \eta, n)$ balls needed to cover the  $B^{s,\mathsf M} (\un \vf, u, \zeta, n)$ ball,
 \[ h^{s,\mathsf M} _{loc }(\zeta , \un \vf) \; := \; \sup_{u \in \OO^s(SM)} \lim\limits_{\eta \to 0} \limsup _{n\to +\infty} \frac{1}{n} \log r^{\mathsf M} (\zeta, \un \vf, u, \eta, n)\]
and $h^{s,\mathsf M} _{loc, \rho}(\zeta) $ the  $\nu _\rho^{\otimes \N\cup \{0\}}$-a.e. value of $h^{s,\mathsf M} _{loc }(\zeta , \un \vf) .$

\begin{prop}\label{killconstant} With the above notations,  we have, for all $\rho <0, \zeta >0,$\begin{equation}\label{mathsfM-3} h^s_{loc ,\rho }(\zeta ) \leq \frac{1}{\mathsf M} h^{s,\mathsf M} _{loc, \rho}(\zeta) .\end{equation} \end{prop}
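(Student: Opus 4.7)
The strategy is to relate the covering number $r(\zeta,\underline\vf,u,\eta,n\mathsf M)$ for the 1-step $B^s$-Bowen balls at time $n\mathsf M$ to the covering number $r^{\mathsf M}(\zeta,\underline\vf,u,\eta,n)$ for the $\mathsf M$-step $B^{s,\mathsf M}$-balls at time $n$. The starting point is the elementary containment
$B^s(\underline\vf,u,\zeta,n\mathsf M)\subseteq B^{s,\mathsf M}(\underline\vf,u,\zeta,n)$,
which holds for any center and follows directly from the definitions, since the condition $d(\underline\vf|_j(w),\underline\vf|_j(u))<\zeta$ at every $j\in\{0,\dots,n\mathsf M\}$ is stronger than requiring it only at the subsampled times $j=k\mathsf M$. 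Hence any cover of $B^{s,\mathsf M}(\underline\vf,u,\zeta,n)$ by small $B^{s,\mathsf M}$-balls restricts to a cover of $B^s(\underline\vf,u,\zeta,n\mathsf M)$.

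The next step is to convert each covering ball $B^{s,\mathsf M}(\underline\vf,w_i,\eta,n)$ into a union of $B^s(\underline\vf,\cdot,\eta,n\mathsf M)$-balls. Two points in such a $B^{s,\mathsf M}$-ball are within $2\eta$ of each other at every checkpoint $k\mathsf M$, and at intermediate times $k\mathsf M+\ell$ with $0<\ell<\mathsf M$ their distance is controlled by the $C^1$-norm of the block map $\vf_{k\mathsf M+\ell-1}\circ\cdots\circ\vf_{k\mathsf M}$ along the leaves of $\ov\W^s$. By Proposition~\ref{stochasticflow} v) (applied with $t=\mathsf M$), these block norms have finite expectation, and by the ergodic theorem for $\sigma^{\mathsf M}$ on $(\S,\nu_\rho^{\otimes\N\cup\{0\}})$, the contribution of the refinement is asymptotically a deterministic rate. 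One obtains an estimate of the form
\[
r(\zeta,\underline\vf,u,\eta,n\mathsf M)\;\leq\; r^{\mathsf M}(\zeta,\underline\vf,u,\eta,n)\cdot N_n(\underline\vf,\eta),
\]
and dividing by $n\mathsf M$, taking $\limsup_n$, then $\lim_{\eta\to 0}$, and $\sup_u$, the left-hand side yields $h^s_{loc,\rho}(\zeta)$ (restriction to the subsequence $m=n\mathsf M$ does not increase the $\limsup$) while the first factor on the right contributes $\tfrac1{\mathsf M}h^{s,\mathsf M}_{loc,\rho}(\zeta)$.

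The essential difficulty is controlling the refinement factor $N_n$ so that it does not pollute the bound with an extra Lyapunov-type term. A naive geometric bound produces a product of block Lipschitz constants, whose logarithm scales linearly in $n$ and would add an extraneous $\frac{\ov m}{\mathsf M}\mathbb{E}_{\nu_\rho}[\log\|\vf\|_{C^1}]$ on the right-hand side. The plan is to circumvent this by exploiting the specific covering geometry: when $\eta$ is small (which is the regime used in the definition of the local entropy), the intermediate iterates of the $B^{s,\mathsf M}$-ball are still concentrated in a region governed by the leaf geometry of $\ov\W^s$, so the refinement should be absorbable into the already counted $B^{s,\mathsf M}$-cover at the cost of a term that vanishes after the $\eta\to 0$ limit. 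Making this absorption rigorous (while handling the unbounded tails of $\nu_\rho$ on $D^\infty(\OO^sSM)$) will be the main technical point; this is where the two extensions mentioned after Proposition~\ref{Bowen} over the original Cowieson--Young argument come into play, namely the fact that $\nu_\rho$ is not supported in a neighborhood of $\ov{\bf\Phi}_{-1}$ and that the mappings are only leafwise smooth.
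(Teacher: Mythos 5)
You set up the problem correctly: the inclusion $B^s(\underline\vf,u,\zeta,n\mathsf M)\subseteq B^{s,\mathsf M}(\underline\vf,u,\zeta,n)$ and the plan to cover by $B^{s,\mathsf M}$-balls and then refine each into $B^s$-balls is exactly the paper's starting point, and you correctly identify the refinement factor $N_n$ as the crux. However, the mechanism you propose for controlling it is wrong. The refinement needed to cover one block $B^{s,\mathsf M}(\underline\vf,w_\ell,\eta,1)$ by Bowen balls that are also small at the \emph{intermediate} times $1,\dots,\mathsf M-1$ is governed by the block Lipschitz constant $\varkappa(\underline\vf)=\max_{0\le k<\mathsf M}\|\vf|_k\|_{C^1}$ divided by the ratio of the new radius to $\eta$; it is scale-invariant and does \emph{not} vanish as $\eta\to0$. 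The expansion at intermediate times is a property of the maps, not of the scale, so your hope that ``the refinement should be absorbable \dots at the cost of a term that vanishes after the $\eta\to0$ limit'' cannot be made rigorous: the extra Lyapunov-type term $\frac{\ov m}{\mathsf M}\E_{\nu_\rho}[\log\|\vf\|_{C^1}]$ you are worried about would survive the $\eta\to0$ limit unchanged.

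The paper's device is different. Fix a large threshold $K$ and cover by balls $B^s(\underline\vf,\cdot,2K\eta,n\mathsf M)$ rather than $B^s(\underline\vf,\cdot,\eta,n\mathsf M)$. On any block $j$ where $\varkappa(\sigma^{j\mathsf M}\underline\vf)\leq K$, one such enlarged ball already contains the corresponding $B^{s,\mathsf M}$-ball and no refinement is needed; on a block where $\varkappa(\sigma^{j\mathsf M}\underline\vf)>K$ one refines by a factor of order $\lceil\varkappa(\sigma^{j\mathsf M}\underline\vf)/K\rceil^{\ov m}$, with an additional bounded cost ${\bf c}$. After taking $\limsup_n\frac1n\log$, then $\eta\to0$ (which harmlessly absorbs the fixed factor $2K$), and applying the ergodic theorem for $\sigma^{\mathsf M}$, the error term is $\ov m\,\E\bigl[\log\lceil\varkappa(\underline\vf)/K\rceil\bigr]+\P\bigl[\varkappa(\underline\vf)>K\bigr]\log{\bf c}$. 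Because $\E[\log\varkappa]<\infty$ by Proposition~\ref{stochasticflow}~v) (equivalently, the finiteness of $B_{1,\mathsf M}(\rho)$ in Proposition~\ref{con-geo-flow}~ii)), this error tends to $0$ as $K\to+\infty$. It is this final $K\to\infty$ limit, exploiting the integrable log-tail of $\nu_\rho$, that removes the extraneous term --- not the $\eta\to0$ limit --- and you would need to adopt this cutoff-and-integrability argument to complete your proof.
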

 \begin{proof} Observe that   $B^s(\un \vf, u, \zeta, n\mathsf M)$ is a subset of 
$B^{s,\mathsf M} (\underline \vf,  u,  \zeta, n ) $, so we are going to cover  $B^{s,\mathsf M} (\underline \vf,  u,  \zeta, n ) $ with $B^s(\un \vf, w,\eta, n\mathsf M)$ balls, $\eta $ arbitrarily small. Start with a cover of $B^{s,\mathsf M} (\underline \vf,  u,  \zeta, n ) $ with  $B^{s,\mathsf M}(\un \vf, w_\ell, \eta, n)$ balls with $ 1 \leq \ell \leq r^\mathsf M (\zeta, \un \vf, u, \eta, n)$ and fix $K>0 $ big. 
Let $\varkappa (\un \vf) := \max \{ \| \vf |_k \|_{C^1} :\  0\leq k < \mathsf M\}$. If $\varkappa (\sigma ^{j\mathsf M} \un \vf) \leq K $ for all $j,$ $ 0\leq j < n,$ then  each $B^{s,\mathsf M}(\un \vf, w_\ell, \eta, n)$ ball is contained in $B^s(\un \vf,  w_\ell , 2K \eta, n\mathsf M)$ and we take these  $B^s(\un \vf, w_\ell ,2 K \eta, n\mathsf M)$ balls  in our cover of  $B^{s,\mathsf M} (\underline \vf,  u,  \zeta, n )$. Otherwise, assume, for instance, that  $\varkappa ( \un \vf) > K$, we find,  for each $w_\ell ,$ at most ${\bf c} \lceil  \varkappa (\un \vf)/K \rceil^{2\ov m}$  points $w'_{\ell '}$ such that the union of the $B^s(\un \vf, w'_{\ell'}, 2K \eta, \mathsf M)$ balls cover $B^{s,\mathsf M}(\un \vf, w_\ell , \eta, 1)$, 
 where ${\bf c}$  is some positive  constant depending on the geometry of $\ov \W^s$  and $\lceil a \rceil$ denotes the smallest integer greater than $a$.  
Working inductively, we see that 
\[  r (\zeta, \un \vf, u, 2K \eta, n\mathsf M)\;\leq \; r^\mathsf M (\zeta, \un \vf, u, \eta, n) \;\Pi _{j=0}^{n -1} \lceil \varkappa (\sigma ^{j\mathsf M} \un \vf) / K \rceil ^{\ov m} \;{\bf c}^{ \Pi _{j=0}^{n -1}\chi _{ \{\varkappa (\sigma ^{j\mathsf M} \un \vf) > K\}}}. \]
It follows that for all $K >0, \un \vf  \in \S,$
\[  \mathsf M h^{s} _{loc}(\zeta , \un \vf)  \leq  h^{s,\mathsf M} _{loc}(\zeta , \un \vf) + \limsup _{n\to +\infty} \frac{{\ov m}}{n} \sum_{j=o}^{n-1}\! \log \lceil \varkappa (\sigma ^{j\mathsf M} \un \vf) / K \rceil +  \limsup _{n\to +\infty} \frac{\log {\bf c}}{n} \sum_{j=o}^{n-1}\! \chi _{\{\varkappa (\sigma ^{j\mathsf M} \un \vf) > K\}}. \]
Finally, we get, for all $\rho <0$, all $\zeta >0, K>0,$  
\[   \mathsf M h^{s} _{loc, \rho }(\zeta ) \; \leq \;  h^{s,\mathsf M} _{loc, \rho}(\zeta , \un \vf) + {\ov m} \E \left[\log \lceil \varkappa (\un \vf) / K \rceil \right] + \P \left[\varkappa (\un \vf) > K\right] \log {\bf c} .\] 
 Since $ \E[ \log \varkappa ] < + \infty ,$ Proposition \ref{killconstant} follows by letting $K$ go to infinity.
\end{proof}

\begin{prop}\label{volumegrowth} Fix $\zeta >0 $ small and $\rho < 0$. For all $r \in \N,$ there is a positive constant $C(r)$ such that, for all $\mathsf M\in \Bbb N$, 
\begin{align*}   &h^{s,\mathsf M}_{loc ,\rho }(\zeta ) \\ &\  \leq \frac{{\ov m}}{r}\int \log \big( \max \big\{ \zeta ^{s-1} \big\| (\vf |_\mathsf M)|_{\ov W^s (u)} \big\|_{C^s} :\  1 \leq s \leq r , u \in \OO^s (SM)\big\}\big) \, d\nu _\rho^{\otimes \mathsf M} (\vf|_\mathsf M)  + \log  C(r).\end{align*} \end{prop}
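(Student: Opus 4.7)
The plan is to carry out an inductive cover using the Yomdin--Gromov volume growth theorem along each block of length $\mathsf M$. Fix $\zeta>0$ small, a realization $\underline\vf\in\S$, $u\in\OO^s(SM)$ and $\eta>0$. Set $g_k:=\sigma^{k\mathsf M}\underline\vf\,|_\mathsf M$, so that $\underline\vf|_{n\mathsf M}=g_{n-1}\circ\cdots\circ g_0$. The ball $B^{s,\mathsf M}(\underline\vf,u,\zeta,n)$ is exactly the set of points $w\in\ov W^s(u)$ such that $g_{k-1}\circ\cdots\circ g_0(w)$ stays within $\zeta$ of $g_{k-1}\circ\cdots\circ g_0(u)$ for $k=1,\dots,n$, so at every stage each iterate lies in a $\zeta$-ball in the $\ov m$-dimensional leaf.

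First, I will cover $B^{s,\mathsf M}(\underline\vf,u,\zeta,n)$ in $\ov W^s(u)$ by at most $C_0(\zeta/\eta)^{\ov m}$ sets of diameter $\leq \eta$. Then I will successively refine: given at stage $k$ a cover of $B^{s,\mathsf M}(\underline\vf,u,\zeta,n)$ by sets each of which is mapped by $g_{k-1}\circ\cdots\circ g_0$ to a set of diameter $\leq\eta$, applying $g_k$ takes each such set to a subset of a $\zeta$-ball in $\ov W^s(\underline\vf|_{(k+1)\mathsf M}u)$. By the Yomdin--Gromov theorem applied to the restriction of $g_k$ along the leaf $\ov W^s$ (where it is $C^\infty$), this image can be subdivided into at most
\[ C(r,\ov m)\,\max_{1\leq s\leq r}\bigl(\zeta^{s-1}\|(g_k)|_{\ov W^s}\|_{C^s}\bigr)^{\ov m/r} \]
subsets of diameter $\leq\eta$ in the new leaf. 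Pulling this partition back, each element of the previous cover is split into at most this many pieces. Iterating for $k=0,\dots,n-1$ and tracking the multiplicative factors gives
\[ r^\mathsf M(\zeta,\underline\vf,u,\eta,n)\;\leq\;C_0(\zeta/\eta)^{\ov m}\,\prod_{k=0}^{n-1}C(r,\ov m)\,\max_{1\leq s\leq r,\,v\in\OO^s(SM)}\bigl(\zeta^{s-1}\|(g_k)|_{\ov W^s(v)}\|_{C^s}\bigr)^{\ov m/r}. \]

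Taking logarithms, dividing by $n$, and letting first $n\to\infty$ and then $\eta\to 0$, the term $\tfrac{\ov m}{n}\log(\zeta/\eta)$ disappears. The product becomes a Birkhoff average over the Bernoulli shift $(\S,\sigma^\mathsf M,\nu_\rho^{\otimes \N})$ of the $\sigma^\mathsf M$-invariant function $\underline\vf\mapsto \log\max_{s,v}(\zeta^{s-1}\|(g_0)|_{\ov W^s(v)}\|_{C^s})$. This function is $\nu_\rho^{\otimes\mathsf M}$-integrable by Proposition~\ref{stochasticflow}~v) (finiteness of the $C^r$-moments, yielding finiteness of $\mathbb E[\log\|(\vf|_\mathsf M)|_{\ov W^s}\|_{C^r}]$), so the strong law of large numbers gives almost-surely, uniformly in $u$,
\[ \limsup_{n\to+\infty}\frac{1}{n}\log r^\mathsf M(\zeta,\underline\vf,u,\eta,n)\leq \log C(r,\ov m)+\frac{\ov m}{r}\int\log\max_{s,v}\bigl(\zeta^{s-1}\|(\vf|_\mathsf M)|_{\ov W^s(v)}\|_{C^s}\bigr)\,d\nu_\rho^{\otimes\mathsf M}(\vf|_\mathsf M), \]
which is the claimed bound with $C(r):=C(r,\ov m)$.

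The hard part will be justifying the leafwise Yomdin--Gromov step, namely that the classical Yomdin bound extends in our setting where $\vf\in D^\infty(\OO^sSM)$ is only $C^\infty$ along the leaves of $\ov\W^s$ (and merely H\"older transversally). Since the argument only uses regularity of $g_k$ restricted to the single leaf through each point, and the leaves are uniformly $C^\infty$ immersed submanifolds of dimension $\ov m$ with controlled geometry (so one can use uniform local charts on leafwise balls of radius $\zeta$ small enough), Yomdin's semi-algebraic reparametrization argument applies on each leaf with constants depending only on $r$, $\ov m$ and the geometry of $(M,g)$. A second subtlety, absent in \cite{CY}, is that $\nu_\rho$ is not supported in a fixed $D^\infty$-neighborhood of $\ov{\bf\Phi}_{-1}$: there is a heavy tail. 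This is precisely handled by passing to the $L^1$-bound (\ref{B-r-bound}) for the integrand rather than using a uniform-in-$\vf$ constant, which is why the conclusion appears in integrated form.
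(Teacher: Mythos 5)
Your proposal follows the paper's approach in its essentials: both apply the Yomdin--Gromov volume-growth machinery from \cite{CY} to the $\mathsf M$-block maps $\vf'_k := \underline\vf|_\mathsf M\circ\sigma^{k\mathsf M}$, both use the $\zeta$-magnification (giving the $\zeta^{s-1}\|\cdot\|_{C^s}$ factors, cf.\ \cite{CY}, p.~1129), and both finish with the ergodic theorem for the i.i.d.\ sequence $(\vf'_k)$. You also correctly flag the two places where the setting departs from \cite{CY} — leafwise-only smoothness and the unbounded tail of $\nu_\rho$ — which is exactly what the paper emphasizes as the needed extensions, and you correctly route the tail issue through the $L^1$-bound (\ref{r-derivatives}) rather than a uniform-in-$\vf$ constant.

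The one step that needs to be tightened is your iterative refinement. You claim that the image $g_k(T)$ of a cover element $T$ of diameter $\leq\eta$ ``can be subdivided into at most $C(r,\ov m)\max_{1\leq s\leq r}(\zeta^{s-1}\|(g_k)|_{\ov W^s}\|_{C^s})^{\ov m/r}$ subsets of diameter $\leq\eta$''. This is not a direct consequence of Yomdin--Gromov: the improved exponent $\ov m/r$ (as opposed to the naive Lipschitz exponent $\ov m$, i.e.\ $\sim\|g_k\|_{C^1}^{\ov m}$) is only available when the set being pushed forward carries a $C^r$-controlled parametrization; an arbitrary $\eta$-diameter set does not. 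Keeping this control through the $n$ compositions is precisely the role of Yomdin's renormalization theorem \cite{Yo} together with Buzzi's telescoping construction \cite{Bu}, and the paper invokes them implicitly by saying it follows Proposition~3 of \cite{CY} ``almost verbatim''; the paper also works with separated sets rather than covers, which is the dual bookkeeping natural for that construction. So the multiplicative structure of your estimate is right, but the inductive object should be a family of $C^r$-reparametrizations (renormalized at each block), not a family of small-diameter sets, in order for the claimed per-step bound to be legitimate.
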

\begin{proof}
Fix $r>0, \mathsf M   \in \N$, a sequence $\underline \vf \in \S $ and $\zeta >0$. Two points  ${w}, w' \in \ov W^s(u)$ are said to be  $(\mathsf M, n, \eta)$-separated if 
\[
\max\left\{d\left(\underline\vf|_{k\mathsf M} (w),  \underline\vf|_{k\mathsf M} (w')\right):\  0 \leq k \leq n\right\}>\eta. 
\]
It is clear that  $r^\mathsf M (\zeta, \un \vf, u, \eta, n)$ is bounded from above by $s^\mathsf M (\zeta, \un \vf, u, \eta, n)$, the maximal cardinality of a set of  $(\mathsf M, n, \eta)$-separated points in  $B^{s,\mathsf M} (\un \vf, u, \zeta, n)$.  Consider  the mappings  $\vf'_k = \underline{\vf}|_\mathsf M \circ \sigma ^{k\mathsf M}$ 
and  their standard magnifications $\widehat {\vf'}_k : B(0,2)^{\ov m} \to \R^{\ov m} $ as explained in \cite{CY}, page 1129. In particular, we have  $\|\widehat {\vf'}_k \|_{C^s} \leq \zeta^{s-1} \| {\vf'}_k \|_{C^s}.$  Using this, we  can estimate  $s^\mathsf M (\zeta, \un \vf, u, \eta, n)$ by following  almost verbatim  the argument for the proof of Proposition 3 in \cite{CY} (which is based on the `Renormalization' Theorem in \cite{Yo} and a telescoping construction in \cite{Bu}) and obtain some constant $C_1(r,{\ov m},{\ov m}) =: C(r)$ as in \cite[Theorem 4]{CY}  such that 
\begin{align*}
&s^{\mathsf M} (\zeta, \un \vf, u, \eta, n) \\
&\ \ \ \ \  \leq C(r)^n \left(\frac{4}{\eta} \right)^{\ov m} \prod_{k=0}^{n-1}  \left(\max \left\{  \zeta ^{s-1}\big\| (\vf '_k)|_{\ov W^s (u)} \big\|_{C^s} :\ 1 \leq s \leq r , u \in \OO^s (SM)\right\}\right)^{{\ov m}/r}.
\end{align*}
Since $\vf'_k$ are independent, the ergodic theorem gives Proposition \ref{volumegrowth}.
 \end{proof}

\begin{cor}\label{yomdin}For any $\a >0$, there exists $\zeta _0>0$ such that if $\zeta \leq \zeta_0$, then \[\limsup_{\rho \to  -\infty } h^s_{loc ,\rho }(\zeta )< \a.\]\end{cor}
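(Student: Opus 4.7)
The plan is to chain Propositions~\ref{killconstant} and~\ref{volumegrowth} and then push $\rho \to -\infty$ using the uniform estimates of Proposition~\ref{con-geo-flow}~ii). For any $r, \mathsf M \in \N$ and $\zeta \in (0, 1]$, these two propositions together give
\[
h^s_{loc,\rho}(\zeta) \;\leq\; \frac{1}{\mathsf M}\, h^{s,\mathsf M}_{loc,\rho}(\zeta) \;\leq\; \frac{\ov m}{r\mathsf M} \int \log \max_{1\leq s\leq r,\, u} \zeta^{s-1}\, \|(\vf|_\mathsf M)|_{\ov W^s(u)}\|_{C^s}\, d\nu _\rho^{\otimes \mathsf M}(\vf|_\mathsf M) \;+\; \frac{\log C(r)}{\mathsf M}.
\]

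To estimate the integral I would use the elementary inequality $\max_{1 \leq s \leq r} \zeta^{s-1} a_s \leq a_1 + \zeta a_r$ (valid for $\zeta \leq 1$ when $s \mapsto a_s$ is nondecreasing, as is the case for successive leafwise $C^s$-norms) together with Jensen's inequality to bound
\[
\int \log \max_{s, u} \zeta^{s-1} \|\vf\|_{C^s}\, d\nu_\rho^{\otimes \mathsf M} \;\leq\; \log \big(B_{1,\mathsf M}(\rho) + \zeta B_{r,\mathsf M}(\rho)\big) .
\]
Taking $\limsup$ as $\rho \to -\infty$ and using the finiteness of $B_{s, \mathsf M} = \limsup_\rho B_{s, \mathsf M}(\rho)$ from Proposition~\ref{con-geo-flow}~ii), we obtain
\[
\limsup_{\rho \to -\infty} h^s_{loc,\rho}(\zeta) \;\leq\; \frac{\ov m}{r\mathsf M}\log\big(B_{1,\mathsf M} + \zeta B_{r,\mathsf M}\big) + \frac{\log C(r)}{\mathsf M}.
\]

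The crucial ingredient is linear growth of $\log B_{1, \mathsf M}$ in $\mathsf M$. By Proposition~\ref{stochasticflow}~ii), the time-one factors $\vf_{\e, 1, k}$ composing $\vf_{\e, \mathsf M}$ are independent with common law $\nu _\rho$; the chain rule for leafwise differentials, combined with submultiplicativity of the leafwise $C^1$-norm under composition, then gives $B_{1, \mathsf M}(\rho) \leq B_{1,1}(\rho)^\mathsf M$ up to a geometric constant, whence $\mathsf M^{-1} \log B_{1, \mathsf M} \leq \log B_{1,1}$, a finite constant independent of $\mathsf M$.

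Given $\alpha > 0$, the parameters are then chosen in the order $r, \mathsf M, \zeta _0$: first take $r$ so large that $\ov m \log B_{1,1}/r < \alpha/3$; next take $\mathsf M$ so large that both $\ov m \log 2/(r\mathsf M) < \alpha/3$ and $\log C(r)/\mathsf M < \alpha/3$; finally set $\zeta _0 := B_{1, \mathsf M}/B_{r, \mathsf M}$, so that for $\zeta \leq \zeta_0$ we have $\log(B_{1, \mathsf M} + \zeta B_{r, \mathsf M}) \leq \log 2 + \log B_{1, \mathsf M}$. Summing the three contributions yields $\limsup_{\rho \to -\infty} h^s_{loc,\rho}(\zeta) < \alpha$ for all $\zeta \leq \zeta _0$. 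The main obstacle is precisely the submultiplicative estimate $B_{1, \mathsf M} \leq B_{1,1}^\mathsf M$: without it the dominant term $\ov m \log B_{1, \mathsf M}/(r\mathsf M)$ would blow up with $\mathsf M$, and the choices of $r$ and $\mathsf M$ could not be decoupled.
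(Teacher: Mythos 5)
Your proof is correct, and it reaches the conclusion by a genuinely different route from the paper's. Both proofs begin by chaining Propositions~\ref{killconstant} and~\ref{volumegrowth}, and both must then tame the $\mathsf M$-dependence of the dominant $C^1$-term before taking $\limsup_\rho$. The paper does this entirely at the pathwise (pre-integration) level: it bounds $\max_s \zeta^{s-1}a_s \leq a_1 + \zeta\sum_{s\geq 2}a_s$, applies $\log(a_1+a_2)\leq\log^+ a_1 + a_2$, uses the pathwise subadditivity $\log^+\|\vf|_\mathsf M\|_{C^1}\leq\sum_{k<\mathsf M}\log^+(\|\vf\|_{C^1}\circ\sigma^k)$ together with $\log^+a\leq a$, and only then integrates, arriving at the bound $\tfrac{\ov m}{r}B_{1,1}+\tfrac{\ov m}{r}B_{1,1}(\rho)+\zeta\tfrac{\ov m}{r\mathsf M}\sum_{s\geq 2}B_{s,\mathsf M}(\rho)$; the constant $\frac{1}{\mathsf M}\log C(r)$ is absorbed at the outset by choosing $\mathsf M$ so that $\frac{1}{\mathsf M}\log C(r)\leq\tfrac{\ov m}{r}B_{1,1}$, and then $\inf_\zeta\limsup_\rho$ yields $\tfrac{2\ov m}{r}B_{1,1}$. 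You instead push the expectation inside the logarithm via Jensen, and control $\log B_{1,\mathsf M}$ by exploiting the \emph{independence} of the time-one factors (Proposition~\ref{stochasticflow}~i), ii)) together with chain-rule submultiplicativity to get $B_{1,\mathsf M}(\rho)\leq B_{1,1}(\rho)^\mathsf M$ (up to a geometric constant from the $C^0$ part of the norm, which is bounded on the compact $\OO^s(SM)$ and is harmless). This gives the slightly sharper dominant term $\tfrac{\ov m}{r}\log B_{1,1}$ in place of the paper's $\tfrac{\ov m}{r}B_{1,1}$, at the cost of invoking Jensen and the product structure of $\nu_\rho^{\otimes\mathsf M}$; the paper's version is more robust in that it only uses elementary pointwise inequalities and would survive without the independence hypothesis. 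Your ordering of quantifiers ($r$, then $\mathsf M$, then $\zeta_0:=B_{1,\mathsf M}/B_{r,\mathsf M}\leq 1$) is sound, and the observation that decoupling $r$ and $\mathsf M$ is the crux of the argument is exactly right.
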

\begin{proof}
Fix $r\geq 2.$  We choose $\mathsf M\in \Bbb N$ large such that $ \displaystyle \frac{1}{\mathsf M} \log C(r) \leq \frac{{\ov m}}{r} B_{1,1}$, where $B_{1,1} $ is  defined in (\ref{r-derivatives}). 
Fix $\zeta \leq 1, \rho < 0. $  By (\ref{mathsfM-3}), $h^s_{loc ,\rho }(\zeta ) \leq \frac{1}{\mathsf M} h^{s,\mathsf M} _{loc, \rho}(\zeta) .$ Therefore, 
\[h^s_{loc ,\rho }(\zeta ) \leq  \frac{{\ov m}}{r} B_{1,1} 
  + \frac{{\ov m}}{r\mathsf M} \int \log \!\left(\!\max_u  \left(  \big\| \vf |_\mathsf M|_{\ov W^s (u)} \big\|_{C^1}  +  \zeta  \sum _{2 \leq s \leq r}  \big\| \vf |_\mathsf M|_{\ov W^s (u)} \big\|_{C^s}  \right)\! \right)\, d\nu ^{\otimes \mathsf M} _\rho (\vf|_\mathsf M). \]
Write, for $\a>0$,  $\log ^+ \a := \max \{ \log \a, 0\}.$ We have, using $\log (\a _1 + \a_2 ) \leq  \log^+ \a_1 + \a _2,$  for $\alpha_1, \alpha_2>0$,
\begin{eqnarray*}  &  &\log \left(\max_u  \left(  \big\| \vf |_\mathsf M|_{\ov W^s (u)} \big\|_{C^1}  +  \zeta  \sum _{2 \leq s \leq r}  \big\| \vf |_\mathsf M|_{\ov W^s (u)} \big\|_{C^s}  \right) \right) \\ 
&&\ \leq\,  \log^+ \left(\max_u  \left(  \big\| \vf |_\mathsf M|_{\ov W^s (u)} \big\|_{C^1} \right) \right)+  \zeta  \sum _{2 \leq s \leq r} \max_u \left( \big\| \vf |_\mathsf M|_{\ov W^s (u)} \big\|_{C^s}  \right) \\
&&\ \leq\,  \sum_{k=0}^{\mathsf M -1} \log^+ \left(\max_u \big( \big\| \vf|_{\ov W^s (u)} \big\|_{C^1}\big) \circ \sigma ^k\right) + \zeta  \sum _{2 \leq s \leq r} \max_u \left( \big\| \vf |_\mathsf M|_{\ov W^s (u)} \big\|_{C^s}\right) .
\end{eqnarray*}
We get by integrating with respect to $\nu ^{\otimes \mathsf M} _\rho ,$
\[h^s_{loc ,\rho }(\zeta ) \leq  \frac{{\ov m}}{r} B_{1,1} +  \frac{{\ov m}}{r} B_{1,1}(\rho) + \zeta \frac{{\ov m}}{r\mathsf M} \sum _{2 \leq s \leq r}B_{s,\mathsf M}(\rho),\]
where $B_{1,1}(\rho),\ B_{2,\mathsf M}(\rho), \cdots,  B_{r,\mathsf M}(\rho)$ are  defined in (\ref{r-derivatives}).  Note that,   by  Proposition \ref{con-geo-flow} ii), 
\[
B_{s,\mathsf M} =\limsup_{\rho \to  -\infty } B_{s,\mathsf M}(\rho) \; < \; + \infty, \ \forall 1\leq s\leq r, 
\]
 hence 
\begin{align*}
\inf_{\zeta >0} \limsup_{\rho \to  -\infty } h^s_{loc ,\rho }(\zeta )\leq  \frac{2{\ov m}}{r} \inf_{\zeta >0} \left( B_{1,1} + \zeta  \frac{1}{2\mathsf M} \sum_{s=2}^{r} B_{s,\mathsf M} \right) = \frac{2{\ov m}}{r} B_{1,1}.
\end{align*}
Since $r$ is arbitrary,  the corollary follows. \end{proof}

\begin{proof}[Proof of Proposition \ref{mainprop}]
Fix  $\a >0. $ We can choose the diameters of the elements of $\XX$  smaller than $c\zeta_0$, where $c$ is a constant depending on the local geometry of the leaves so that the diameter of the elements of $\PP \cap \RR$ are smaller than $\zeta_0$ and Corollary \ref{yomdin} applies.   We can also ensure  that these diameters are smaller than $\d$ given by Proposition \ref{almostsub-add}.   We may assume that the boundaries of the elements of $\PP, \XX $ and $\RR^j$ are all $\ov{\bf m}$-negligible. 

By definition,  $ h_{\ov{\bf m}}^s \geq \liminf_{n\to +\infty} \inf_j \frac{1}{n} H_{\ov{\bf m}} (\PP _{-n}| \RR^j ).$   We can choose $n $ and $j$ so that 
\begin{equation}\label{mproof-eq-1} h^s_{\ov{\bf m}} \geq  \frac{1}{n} H_{\ov{\bf m}} (\PP _{-n}| \RR^j ) - \a.
\end{equation}
Consider now $\rho_p, p\in \N$,   such that  $\rho_p\to -\infty$ and $\ov{\bf m}_{\rho _p } \rightarrow \ov{\bf m}$ as $p\to +\infty$. For  $\P$-a.e.   $\om \in \Om,$ each  element of  the partition $\bigcap_{k=0}^n (\underline \vf _{\rho_p}|_k(\om))^{-1}\PP$ converge in the Hausdorff metric towards the corresponding element $\bigcap _{k=0}^n \ov {\bf \Phi }_k \PP$.  Note that  all these elements of $\PP_{-n}$, and the elements of $\RR^j$ have $\ov{\bf m} $ negligible boundaries. It follows that there exists $P \in \N$ such that for $p \geq P,$
\begin{equation}\label{mproof-eq-2} \frac{1}{n} H_{\ov{\bf m}} (\PP _{-n}| \RR^j )  \; \geq \; \frac{1}{n} H_{\mu _{\rho _p}} (\PP _{-n}| \RR^j )  - \a  \; \geq  \;\frac{1}{n} H_{\mu _{\rho _p}} (\PP _{-n}| \RR)  - \a .
\end{equation}
The second inequality holds because the partition $\RR$ is finer than $\RR^j$. By Proposition \ref{almostsub-add}, we have, by our choice of $\d$ and as soon as $\rho_p < \L,$
\begin{equation}\label{mproof-eq-3} \frac{1}{n} H_{\mu _{\rho _p}} (\PP _{-n}| \RR) \; \geq \; \underline{h}^s_{\rho_p,\PP} - \a \; \geq  \; h^s_{\rho_p} -2\a - h^s_{loc ,\rho_p }(\zeta ),
\end{equation}
where the second equality follows from Proposition \ref{Bowen}. Finally, using all the above  inequalities  (i.e., (\ref{mproof-eq-1}),  (\ref{mproof-eq-2}) and (\ref{mproof-eq-3})) and Corollary \ref{yomdin}, we  find that 
\[ h_{\ov{\bf m}}^s \; \geq \; \limsup _{p\to +\infty}  h^s_{\rho_p}  - 5\a .\]
Proposition \ref{mainprop} follows from the arbitrariness of $\a.$
\end{proof}

\end{document}